\documentclass[11pt,reqno]{article}

\usepackage{amsthm,amsmath,amssymb}
\usepackage[T1]{fontenc}
\usepackage[utf8]{inputenc}
\usepackage[english]{babel}
\usepackage{graphicx}
\usepackage[dvipsnames]{xcolor}
\usepackage[pagebackref,breaklinks,colorlinks=true,linkcolor=MidnightBlue,citecolor=MidnightBlue]{hyperref}
\usepackage[shortlabels]{enumitem}
\usepackage{tikz}
\usepackage{hyperref}
\usepackage{caption}
\usepackage{mathtools}
\usepackage{nicefrac}
\usepackage{cleveref}
\usepackage{crossreftools}

% Import LaTeX commands and styles
\usepackage{fl}
\usepackage[textwidth=2cm, textsize=footnotesize]{todonotes}

\theoremstyle{definition}
\newtheorem{definition}{Definition}[section]
\newtheorem{example}[definition]{Example}
\newtheorem{remark}[definition]{Remark}

\theoremstyle{plain}
\newtheorem{theorem}[definition]{Theorem}
\newtheorem*{theorem*}{Theorem}
\newtheorem{lemma}[definition]{Lemma}
\newtheorem{prop}[definition]{Proposition}
\newtheorem{corollary}[definition]{Corollary}

\numberwithin{equation}{section}

\theoremstyle{plain}
\newtheorem{algorithm}{Algorithm}
\newtheorem*{algorithm*}{Algorithm}

\newenvironment{manualalg}[1]{%
  \manualalginner
}{\endmanualalginner}

\AtBeginDocument{ \def\MR#1{} }     % Remove MR entry in BibTeX

% This document
\newcommand{\xb}{{\bar x}}
\newcommand{\xt}{{\tilde x}}

\newcommand{\yt}{{\tilde y}}
\newcommand{\yb}{{\bar y}}
\newcommand{\yh}{{\hat y}}
\newcommand{\ft}{{\tilde f}}

\newcommand{\ib}{{\bar\imath}}
\newcommand{\jb}{{\bar\jmath}}
\newcommand{\kb}{{\bar k}}
\newcommand{\lb}{{\bar\ell}}
\newcommand{\mb}{{\bar m}}

\newcommand{\sbar}{{\bar s}}

\DeclareMathOperator{\KL}{KL}

\newcommand{\X}{\mathsf{X}}
\newcommand{\Y}{\mathsf{Y}}
\newcommand{\Z}{\mathsf{Z}}

\providecommand{\scalh}[2]{\langle{#1},{#2}\rangle}

\newcommand{\ps}[1]{\langle #1 \rangle}

\setlist[enumerate,1]{font=\normalfont}
\renewcommand{\S}{\mathfrak{S}}
\DeclareMathOperator{\cexp}{c-exp}

% #################################################### %

\title{Gradient descent with a general cost}
\author{
  Flavien Léger\thanks{Corresponding/first author. INRIA Paris, \textsc{France} (\texttt{flavien.leger@inria.fr}).}, \ \ 
  Pierre-Cyril Aubin-Frankowski\thanks{  INRIA and Département d’Informatique, École Normale Supérieure, PSL Research University (\texttt{pierre-cyril.aubin@inria.fr})}
}

\date{\today}

\begin{document}
\maketitle

We present a new class of gradient-type optimization methods that extends vanilla gradient descent, mirror descent, Riemannian gradient descent, and natural gradient descent. 
Our approach involves constructing a surrogate for the objective function in a systematic manner, based on a chosen \emph{cost function}. This surrogate is then minimized using an alternating minimization scheme. 
Using optimal transport theory we establish convergence rates based on generalized notions of smoothness and convexity. 
We provide local versions of these two notions when the cost satisfies a condition known as nonnegative cross-curvature.
In particular our framework provides the first global rates for natural gradient descent and the standard Newton's method.

\setcounter{tocdepth}{2}
\tableofcontents

% Introduction

\section{Introduction}

The present work lies at the intersection of three recurring questions in optimization. 
\begin{enumerate}[(i)]
    \item How to extend classical methods and their convergence theory beyond the Euclidean setting? 
    
    \item How to integrate different families of algorithms into a unified framework?
    
    \item How to systematically approximate optimization problems? 
\end{enumerate}
As a starting point for (i), let us focus on one of the simplest and most important algorithms in optimization, the gradient descent method
\begin{equation} \label{eq:intro:gd}
    x_{n+1}-x_n = -\frac{1}{L}\nabla f(x_n).
\end{equation}
Here $f$ is an objective function that we want to minimize and $L>0$ is a parameter. The fact that $L$ is the only parameter to choose in \eqref{eq:intro:gd} accounts for the simplicity of gradient descent and gives us a practical general-purpose algorithm. However in many situations we may want to tailor the algorithm more closely to $f$, or there may be additional structure or geometry to exploit. In such cases it is natural to use a variant of \eqref{eq:intro:gd}.

Let us bring up three popular generalizations of gradient descent: mirror descent, natural gradient descent, and Riemannian gradient descent. These iterations can be written respectively as 
\begin{align}
    \nabla u(x_{n+1})-\nabla u(x_n) &= -\nabla f(x_n), \label{eq:intro:md}\\
    x_{n+1}-x_n &= -\nabla^2u(x_n)^{-1}\nabla f(x_n),\label{eq:intro:ngd} \\
    -\nabla_x \,d^2(x_n,x_{n+1})/2 &=-\frac1L\nabla f(x_n). \label{eq:intro:rd}
\end{align}
Each of these extensions already provides much more room to tailor the algorithm to the objective function: through the choice of a convex function $u(x)$ for \eqref{eq:intro:md} and \eqref{eq:intro:ngd}, and of a Riemannian distance $d(x,y)$ for \eqref{eq:intro:rd}.
In this work we go one step further and introduce an extension of \eqref{eq:intro:gd} where in some sense the Euclidean norm $\norm{x-y}^2$ is replaced with a general cost function $c(x,y)$ of two variables.

Let us now focus on the point (ii) mentioned at the beginning.
Often in optimization we are faced with many similar methods and it is of great interest to know whether certain methods are equivalent and can be grouped together. As an example, estimate sequences \cite[Chapter 2.2.1]{Nesterov2018} offer an abstract setting to analyze many different algorithms. So does the ``approximate duality gap technique'' introduced by \cite{Diakonikolas2019}. A similar objective was pursued in \cite{dAspremont2021}, where the authors study accelerated first-order methods using ``systematic templates''. Our method brings together \eqref{eq:intro:md}--\eqref{eq:intro:rd} and other nonlinear versions of gradient descent under a common framework, and provides a unified convergence theory.

Concerning point (iii), problem approximation involving \emph{surrogate} functions can be done through \emph{majorization}--\emph{minimization algorithms}, see the many examples in \cite{lange2016mm}. It is for instance the underlying rationale behind the Expectation--Maximization (EM) algorithm in statistics. Majorization often happens by adding an extra variable, a procedure sometimes known as augmentation \cite{deLeeuw1994}. Majorization also occurs in all the descent lemmas of the optimization community, though the role of the extra variable is not often stressed, to the exception of accelerated settings \cite[p.4]{Drusvyatskiy2018}. It is key to many approximation procedures such as Tikhonov's regularization and proximal methods \cite{Beck2009}. \cite[Section 2]{Mairal2015} presents many such upper-bounds, but there the way to build the surrogates is problem-specific and many assumptions boil down to smoothness and strong convexity. There is a need to provide a systematic way of building surrogates in the non-Euclidean case and, most of all, have generic assumptions to prove that a given algorithm will successfully minimize the surrogate.

\paragraph{Short summary of the contents of this paper.} 
Let $f\colon X\to\R$ be an objective function that we would like to minimize over $X\subset \Rd$. While the function $f$ and its domain $X$ are given to us and shouldn't be modified, we can choose freely a second set $Y\subset\Rd$ and a function $c(x,y)$ ($x\in X$, $y\in Y)$. With the help of the $c$-transform $f^c(y)=\sup_{x \in X} f(x)-c(x,y)$ (Definition~\ref{def:c-transform}), the inequality 
\begin{equation} \label{eq:intro:surrogate}
    f(x)\le\phi(x,y)\coloneqq c(x,y)+f^c(y)
\end{equation}
automatically holds for all $x\in X$, $y\in Y$. We interpret \eqref{eq:intro:surrogate} as majorizing $f$ by the \emph{surrogate} function $\phi$. Our main algorithm (Algorithm~\ref{algo}) is then defined as an alternating minimization of the surrogate:

\begin{algorithm*}[Gradient descent with a general cost]
    \begin{align}
    y_{n+1} &= \argmin_{y\in Y} c(x_n,y) + f^c(y), \label{algo:intro:step1}\\
    x_{n+1} &= \argmin_{x\in X} c(x,y_{n+1}) + f^c(y_{n+1}). \label{algo:intro:step2}
\end{align}
\end{algorithm*}

In optimal transport, $f$ is said to be \emph{$c$-concave} (Definition~\ref{def:c-concave}) if $f(x)=\inf_{y} c(x,y)+f^c(y)$, i.e.\ if there is no gap between $f$ and the lower envelope of the surrogate $\phi$. In optimization terms $c$-concavity is a \emph{smoothness} property (Section~\ref{sec:examples:mirror-descent}). Then \Cref{prop:alg_grad} shows that when $f$ is $c$-concave, \eqref{algo:intro:step1}--\eqref{algo:intro:step2} can be written in the more explicit form
\begin{align}
    -&\nabla_x c(x_n, y_{n+1})=-\nabla f(x_n), \label{algo:intro:step1_grad}\\
    &\nabla_x c(x_{n+1}, y_{n+1}) =0. \label{algo:intro:step2_grad}
\end{align}    
Next we introduce a new property that generalizes standard convexity: we say that $f$ is $c$-cross-convex (Definition~\ref{def:sc}) if
\begin{equation*}\label{eq:intro:sc}
    f(x) -f(x_n) \geq  c(x,y_{n+1})-c(x,y_n)+ c(x_n,y_n)-c(x_n,y_{n+1}).
\end{equation*}
We establish convergence rates for \eqref{algo:intro:step1_grad}--\eqref{algo:intro:step2_grad}, or \eqref{algo:intro:step1}--\eqref{algo:intro:step2}, in Theorem~\ref{thm:cv_rates}, stated partially here:

\begin{theorem*}
    Suppose that $f$ is $c$-concave and $c$-cross-convex. Then
    \begin{equation}\label{eq:intro:descent_sublinear}
        f(x_n)-f(x_*)\le \frac{c(x_*,y_0)-c(x_0,y_0)}{n},
   \end{equation}   
   where $x_*=\argmin_Xf$.
\end{theorem*}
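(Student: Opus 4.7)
The plan is to combine the descent property (an immediate consequence of $c$-concavity) with the $c$-cross-convexity inequality in such a way that a cross-term cancels and one obtains a genuine telescoping bound on $f(x_n)-f(x_*)$.

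First I would establish the one-step descent. Since $f$ is $c$-concave and $y_{n+1}$ realizes the infimum in step \eqref{algo:intro:step1}, the equality $f(x_n)=c(x_n,y_{n+1})+f^c(y_{n+1})$ holds. Combining this with the surrogate bound \eqref{eq:intro:surrogate} applied at $(x_{n+1},y_{n+1})$ yields
\[
    f(x_{n+1})-f(x_n)\le c(x_{n+1},y_{n+1})-c(x_n,y_{n+1}),
\]
and the right-hand side is nonpositive by \eqref{algo:intro:step2} (which says that $x_{n+1}$ is a minimizer of $c(\cdot,y_{n+1})$), so $f(x_{n+1})\le f(x_n)$.

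Next I would apply the $c$-cross-convexity inequality with $x=x_*$ to obtain
\[
    f(x_n)-f(x_*)\le c(x_*,y_n)-c(x_*,y_{n+1})+c(x_n,y_{n+1})-c(x_n,y_n),
\]
and add the descent inequality above. The cross-term $c(x_n,y_{n+1})$ cancels, and with the shorthand $A_n\coloneqq c(x_*,y_n)-c(x_n,y_n)$ this gives the key telescoping relation
\[
    f(x_{n+1})-f(x_*)\le A_n-A_{n+1}.
\]

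The final step is to sum this inequality from $k=0$ to $n-1$, yielding $\sum_{k=1}^n(f(x_k)-f(x_*))\le A_0-A_n$. For $n\ge 1$, $x_n$ is the minimizer of $c(\cdot,y_n)$ by \eqref{algo:intro:step2}, so $A_n\ge 0$ and may be dropped; the right-hand side then reduces to $c(x_*,y_0)-c(x_0,y_0)$. Combining with the monotonicity $f(x_n)\le f(x_k)$ for $k\le n$ from the first step yields $n(f(x_n)-f(x_*))\le\sum_{k=1}^n(f(x_k)-f(x_*))$, which is the claimed rate.

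The only delicate moment in this plan is the algebraic cancellation leading to the telescope: cross-convexity alone produces a bound polluted by the extra term $c(x_n,y_{n+1})-c(x_n,y_n)$, and it is precisely the descent inequality that, once added, converts this leftover into the clean difference $A_n-A_{n+1}$ involving the optimality gap $c(x_*,y_n)-c(x_n,y_n)$. Nothing else looks like a real obstacle.
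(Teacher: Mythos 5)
Your proof is correct and is essentially the paper's argument with the intermediate abstraction stripped away: your telescoping quantity $A_n=c(x_*,y_n)-c(x_n,y_n)$ equals $\phi(x_*,y_n)-\phi(x_n,y_n)$ for the surrogate $\phi=c+f^c$, and your "descent plus cross-convexity" combination is precisely the five-point property of $\phi$ evaluated at $x_*$ (Proposition~\ref{prop:conc-conv-fpp}), which the paper then sums in exactly the same way in Theorem~\ref{thm:am-rates}(ii). The only difference is presentational — you inline the two lemmas rather than invoking them — so nothing further is needed.
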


We also prove linear rates under a stronger version of cross-convexity. 
To obtain these rates we develop a new convergence theory for alternating minimization of a general function $\phi(x,y)$, 
\begin{equation} \label{eq:intro-am}
    \begin{aligned}
        y_{n+1} &= \argmin_{y\in Y}\phi(x_n,y)\\
        x_{n+1} &= \argmin_{x\in X} \phi(x,y_{n+1}),
    \end{aligned}
\end{equation}
since  \eqref{algo:intro:step1}--\eqref{algo:intro:step2} is of this form.
In 1984, Csiszár and Tusnády introduced the five-point property (Definition~\ref{def:fpp}) 
\begin{equation} \label{eq:intro-fpp}
    \phi(x,y_{n+1}) + \phi(x_n,y_n) \leq \phi(x,y) + \phi(x,y_n).
\end{equation}
When $\phi(x,y)=c(x,y)+f^c(y)$ the five-point property is connected to both cross-convexity and $c$-concavity of $f$.
We show that \eqref{eq:intro-fpp} yields sublinear rates for \eqref{eq:intro-am} (\Cref{thm:am-rates}).
We also obtain linear rates under a strong five-point property. 

In general \eqref{eq:intro-fpp} is nonlocal. When $\phi$ has \emph{nonnegative cross-curvature} (Definition~\ref{def:cross-curvature}), we provide the following ``semi-local'' criterion for \eqref{eq:intro-fpp}: if $F(x)\coloneqq \inf_y\phi(x,y)$ is convex on certain paths called $c$-segments (Definition~\ref{def:c-segments}) then $\phi$ satisfies \eqref{eq:intro-fpp} (\Cref{thm:sc-fpp-am}).

This situation is applicable to many schemes of interest, studied in Section~\ref{sec:examples}: gradient descent, mirror descent, natural gradient descent, Newton's method, POCS, Sinkhorn and EM. In particular for the natural gradient descent iteration 
\begin{equation}\label{eq:intro-ngd}
    x_{n+1}-x_n=-\nabla^2u(x_n)^{-1}\nabla f(x_n),
\end{equation}
(and Newton's method for which $u=f$), $c$-concavity takes the form 
\[
    \nabla^2f \le \nabla^3u\big(\nabla^2u(x)^{-1}\nabla f,-,-\big) + \nabla^2u,
\]
and cross-convexity is the convexity of the function $f\circ\nabla u^*$ (\Cref{lemma:ngd-local-characterizations}). These two properties imply the first global convergence rates for \eqref{eq:intro-ngd} (\Cref{thm:sec-ngd-cv}) and Newton's method (\Cref{thm:sec-Newton-cv}).

\paragraph{Main contributions.}
\begin{enumerate}[(a)]
    \item We show that the \emph{five-point property} of Csiszár and Tusnády gives convergence rates for alternating minimization.

    \item We present a new approach to obtain majorizing surrogates for an objective function, based on a cost function $c$ and optimal transport theory. We then introduce a new optimization method expressed as an alternating minimization of this surrogate. This method extends and unifies vanilla gradient descent, mirror descent, Riemannian gradient descent, natural gradient descent, and Newton's method.
    
    \item We show that the Euclidean notion of \emph{smoothness} generalizes to a notion known as $c$-concavity in optimal transport. Furthermore we introduce cross-convexity, a notion that generalizes classical convexity.
    
    \item In general $c$-concavity and cross-convexity are nonlocal properties that may prove hard to obtain. Under a condition on $c$ known as nonnegative cross-curvature, a local characterization of $c$-concavity is known in optimal transport. Under the same condition we propose a semi-local criterion for cross-convexity.
    
     \item We establish sublinear and linear convergence rates under $c$-concavity and cross-convexity.

    \item Our framework provides a unified convergence theory for gradient descent, mirror descent, Riemannian gradient descent, natural gradient descent, Newton's method, forward-backward methods, and alternating minimization methods such as projections onto convex sets, Sinkhorn and EM. In particular we obtain the first global rates for natural gradient descent and Newton's method.

\end{enumerate}

The remainder of the paper is organized as follows. In Section~\ref{sec:am} we present new results on alternating minimization, with a review in Section~\ref{sec:background:mtw} of the necessary optimal transport material to obtain the semi-local criterion in Section~\ref{sec:am-sufficient-fpp}. In \Cref{sec:gd}, we introduce our gradient descent algorithm, as well as a more general forward--backward version in Section~\ref{sec:fb}. Finally several examples are studied in detail in \Cref{sec:examples}.

% Alternating minimization

\section{Alternating minimization}\label{sec:am}

In this section we present new convergence rates for the alternating minimization method (Algorithm~\ref{algo:am} below). The theory developed in this section paves the way to the convergence rates for our gradient descent algorithm in Section~\ref{sec:gd}, but is also of independent interest.

We first revive in Section~\ref{sec:am-rates} an inequality introduced in 1984 by Csiszár and Tusnády \cite{CsiszarTusnady1984} under the name ``five-point property'', and show that it directly implies a sublinear convergence rate for the values of the minimized function (Theorem~\ref{thm:am-rates}). 
We review in Section~\ref{sec:background:mtw} the optimal transport material that we then use in Section~\ref{sec:am-sufficient-fpp} to design a semi-local criterion for the five-point property.

Our starting point is a function $\phi\colon X\times Y\to\R$, where $X$ and $Y$ are two (open) subsets of $\Rd$, or more generally two $d$-dimensional manifolds. In Section~\ref{sec:am-rates}, $X$ and $Y$ can even be taken to be any general sets. We suppose that $\phi$ is bounded below on $X\times Y$ and are interested in the minimization problem
\begin{equation}\label{eq:prob_joint}
    \min_{x\in X,y\in Y} \phi(x,y).
\end{equation}

The \emph{alternating minimization} procedure consists in successively minimizing $\phi$ over one variable while keeping the other one fixed. It is also known as (block) coordinate descent or nonlinear Gauss--Seidel. In this section we will always assume the following:
\begin{enumerate}[series=ass,label=\normalfont\textbf{(A\arabic*)}]
    \item For each $x\in X$, the function $y\mapsto \phi(x,y)$ attains its minimum at a unique point in  $Y$, and for each $y\in Y$, the function $x\mapsto \phi(x,y)$ attains its minimum at a unique point in $X$. \label{ass:phi-min-min}
\end{enumerate}
Under \ref{ass:phi-min-min} the alternating minimization algorithm can be written as follows.

\begin{algorithm}[Alternating minimization] \label{algo:am}
    Initialize $x_0\in X$ and alternate the following two steps,
\begin{align}
    y_{n+1} &= \argmin_{y\in Y}  \phi(x_n,y),\label{eq:am:step1}\\
    x_{n+1} &= \argmin_{x\in X} \phi(x,y_{n+1}).\label{eq:am:step2}
\end{align}
\end{algorithm}

\subsection{Convergence rates under the five-point property} \label{sec:am-rates}

In the context of alternating minimization, Csiszár and Tusnády introduced in~\cite{CsiszarTusnady1984} the \emph{five-point property}. We recall it below and also define a strong version of it; the five-point property will be related to sublinear rates while its stronger form will be related to linear rates (see Theorem~\ref{thm:am-rates}). 

In this section $X$ and $Y$ can be general sets. Let us then define two maps that correspond to the $y$- and $x$-updates of Algorithm~\ref{algo:am},
\begin{equation}\label{eq:def-T-S}
    \begin{aligned} 
        T_\phi(x) = \argmin_{y\in Y} \phi(x,y),\\
        S_\phi(y) = \argmin_{x\in X} \phi(x,y).
    \end{aligned}        
\end{equation}
These two maps are well-defined under \ref{ass:phi-min-min}.

\begin{definition}[Five-point property] \label{def:fpp}
    We say that $\phi$ satisfies the five-point property if for all $x\in X,y,y_0\in Y$ we have
    \begin{equation} \label{fpp} \tag{FP}
        \phi(x,y_1) + \phi(x_0,y_0) \leq \phi(x,y) + \phi(x,y_0),
    \end{equation}
    where $x_0\coloneqq S_\phi(y_0)$ and $y_1\coloneqq T_\phi(x_0)$. Additionally let $\lambda > 0$. 
    We say that $\phi$ satisfies the $\lambda$-strong five-point property if for all $x\in X,y,y_0\in Y$ we have
    \begin{equation}\label{sfpp} \tag{$\lambda$-FP}
        \phi(x,y_1) + (1-\lambda)\phi(x_0,y_0) \leq \phi(x,y) + (1-\lambda)\phi(x,y_0),
    \end{equation}
    with $x_0\coloneqq S_\phi(y_0)$ and $y_1\coloneqq T_\phi(x_0)$. 
\end{definition}

A few observations are in order. 
(i) The name ``five-point property'' comes from the fact that five different points appear in \eqref{fpp}: $x,y,y_0,x_0,y_1$. However note that only the first three of these points can be chosen freely while $x_0$ and $y_1$ are both defined from $y_0$. 
(ii) In the strong version \eqref{sfpp} we let $\lambda>0$ but the inequality is only ever used for $0<\lambda<1$; this is similar to what happens for classical $\lambda$-strongly convex functions. Indeed if $\lambda\geq 1$ and $(x_*,y_*)$ is a global minimizer of $\phi$, then \eqref{sfpp} implies $\phi(x_*,y_1)\leq \phi(x_*,y_*) - (\lambda-1)(\phi(x_*,y_0)-\phi(x_0,y_0))\leq \phi(x_*,y_*)$. This says that the algorithm always converges in two steps. 
(iii) The five-point property is a property of $\phi$ only and doesn't use any structure of $X$ and $Y$, be it metric or topological. This is why in this subsection $X$ and $Y$ can be taken to be general sets.
(iv) Assumption \ref{ass:phi-min-min} requires the sets $\argmin_Y \phi(x,\cdot)$ and $\argmin_X \phi(\cdot,y)$ to be nonempty and to reduce to a singleton. Out of these two properties only the nonemptiness is important: in many cases the requirement of reducing to a single element could be removed and we would take for instance $y_{n+1}\in\argmin_y\phi(x_n,y)$ in place of \eqref{eq:am:step1}. This is because the focus of this work is on the values $\phi(x_n,y_n)$ rather than the iterates $(x_n,y_n)$ themselves.

\begin{remark}[Csiszár--Tusnády and Byrne]\label{rmk:csiszar}
    The five-point property \eqref{fpp} corresponds to the one reported by Byrne in his review~\cite{Byrne2012} (with a change of index due to a difference between our notation and his). Oddly enough it is slightly different from the one originally introduced by Csiszár and Tusnády in~\cite{CsiszarTusnady1984}, which is
    \begin{equation}\label{eq:5pp-CT}
        \phi(x,y_1) + \phi(x_0,y_1) \leq \phi(x,y) + \phi(x,y_0).
    \end{equation}
    (The term $\phi(x_0,y_1)$ replaces $\phi(x_0,y_0)$.) It is clear that \eqref{eq:5pp-CT} is weaker than \eqref{fpp} since $\phi(x_0,y_1) \leq \phi(x_0,y_0)$. When $\phi$ is a Kullback--Leibler divergence, Csiszár and Tusnády show that \eqref{eq:5pp-CT} holds by obtaining a ``three-point property'' and a ``four-point property'' which when combined jointly imply \eqref{fpp} and therefore their five-point property \eqref{eq:5pp-CT}. Thus \eqref{eq:5pp-CT} is not paramount to the analysis of \cite{CsiszarTusnady1984}.
 \end{remark}

We are now ready to state our first result, which shows that a simple rewriting of the five-point properties \eqref{fpp} and \eqref{sfpp} implies convergence rates on the values $\phi(x_n,y_n)$. We set
\[
    \phi_* = \inf_{x\in X,y\in Y}\phi(x,y),
\]
and assume it to be finite and attained at $(x_*,y_*)\in X\times Y$.

\begin{theorem}[Convergence rates for alternating minimization] \label{thm:am-rates}    
    Suppose that $\phi$ satisfies \ref{ass:phi-min-min} and consider Algorithm~\ref{algo:am}. Then, the following statements hold.
    \begin{enumerate}[(i)]
        
        \item For all $n\geq 0$, $\phi(x_{n+1},y_{n+1}) \leq \phi(x_n,y_{n+1}) \leq \phi(x_n,y_n)$.
        
        \item Suppose that $\phi$ satisfies~\eqref{fpp}. Then for any $x\in X,y\in Y$ and any $n\geq 1$,
        \[
            \phi(x_n,y_n) \leq \phi(x,y) + \frac{\phi(x,y_0) - \phi(x_0,y_0)}{n}.
        \] 
        In particular $\phi(x_n,y_n)-\phi_* = O(1/n)$. 
        
        \item Suppose that $\phi$ satisfies \eqref{sfpp} for some $\lambda\in(0,1)$. Then for any $x\in X,y\in Y$ and any $n\geq 1$, 
        \[
            \phi(x_n,y_n) \leq \phi(x,y) + \frac{\lambda[\phi(x,y_0)-\phi(x_0,y_0)]}{\Lambda^n-1},
        \]
        where $\Lambda\coloneqq(1-\lambda)^{-1}>1$. In particular $\phi(x_n,y_n)-\phi_* = O((1-\lambda)^n)$. 
        \end{enumerate}     
\end{theorem}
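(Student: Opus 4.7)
My plan is to treat the three parts together: part (i) is a direct consequence of the defining minimizations in Algorithm~\ref{algo:am} and will serve as a monotonicity tool, while parts (ii) and (iii) both arise by specializing the free base point ``$y_0$'' in \eqref{fpp} or \eqref{sfpp} to an arbitrary iterate $y_k$, telescoping the resulting one-step bound, and then invoking (i) to pass from the telescopic sum to the last iterate $\phi(x_n,y_n)$.

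For part (i) I would simply unpack the definitions: $y_{n+1}=\argmin_y\phi(x_n,y)$ yields $\phi(x_n,y_{n+1})\le\phi(x_n,y_n)$, and $x_{n+1}=\argmin_x\phi(x,y_{n+1})$ yields $\phi(x_{n+1},y_{n+1})\le\phi(x_n,y_{n+1})$. The chained inequality gives the claim; the immediate corollary I will reuse is that $k\mapsto\phi(x_k,y_k)$ is non-increasing.

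For part (ii) I apply \eqref{fpp} with $y_0$ replaced by the iterate $y_k$ for each $k\ge 0$ (interpreting the initialization so that $x_0=S_\phi(y_0)$, and hence $x_k=S_\phi(y_k)$ and $y_{k+1}=T_\phi(x_k)$ for all $k\ge 0$). This specializes \eqref{fpp} into the telescopic inequality
\[
\phi(x_k,y_k)-\phi(x,y)\le\phi(x,y_k)-\phi(x,y_{k+1}).
\]
Summing over $k=0,\ldots,n-1$ and using $\phi(x,y_n)\ge\phi(x_n,y_n)$ (which is just $x_n=S_\phi(y_n)$) gives $\sum_{k=0}^{n-1}[\phi(x_k,y_k)-\phi(x,y)]+\phi(x_n,y_n)-\phi(x,y)\le\phi(x,y_0)-\phi(x,y)$. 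To finish, I separate out the $k=0$ term as $\phi(x_0,y_0)-\phi(x,y)$ and lower-bound each of the remaining $n-1$ summands by $\phi(x_n,y_n)-\phi(x,y)$ using (i); rearrangement collapses everything to $n[\phi(x_n,y_n)-\phi(x,y)]\le\phi(x,y_0)-\phi(x_0,y_0)$, the claimed $O(1/n)$ rate.

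For part (iii) I run the same substitution in \eqref{sfpp}. Writing $u:=1-\lambda$, $\alpha_k:=\phi(x,y_k)-\phi(x,y)$, $\beta_k:=\phi(x_k,y_k)-\phi(x,y)$, and $D_k:=\alpha_k-\beta_k\ge 0$, the strong five-point property rearranges into $\alpha_{k+1}\le u\,D_k$, which is equivalent to the one-step recursion $D_{k+1}\le uD_k-\beta_{k+1}$. The key point is that (i) implies $\beta_{k+1}\ge\beta_n$ for all $k\le n-1$; replacing $\beta_{k+1}$ by $\beta_n$ in the recursion and iterating $n$ times yields
\[
0\le D_n\le u^n D_0-\beta_n\sum_{j=0}^{n-1}u^j=u^n D_0-\frac{\beta_n(1-u^n)}{\lambda}.
\]
Isolating $\beta_n$ produces $\beta_n\le\lambda u^n D_0/(1-u^n)=\lambda D_0/(\Lambda^n-1)$, which is the claim since $D_0=\phi(x,y_0)-\phi(x_0,y_0)$. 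The main obstacle I anticipate is index bookkeeping: it is essential to replace $\beta_{k+1}$ by the smallest value $\beta_n$ (rather than by $\beta_0$ or $\beta_k$) so that the correction term sums to the exact geometric factor $(1-u^n)/\lambda$ which, via $\Lambda^n-1=u^{-n}(1-u^n)$, gives precisely the denominator $\Lambda^n-1$ in the stated rate.
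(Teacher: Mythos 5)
Your proof is correct and follows essentially the same route as the paper: part (i) from the two argmin steps, and parts (ii)--(iii) by applying \eqref{fpp}/\eqref{sfpp} with base point $y_k$, telescoping, and using the monotonicity from (i) to replace the intermediate values by $\phi(x_n,y_n)$. Your unrolled recursion in (iii) is just the paper's weighted telescoping sum (dividing by $(1-\lambda)^{k+1}$) written in a different order, and your reading of the initialization ($x_0=S_\phi(y_0)$) matches the paper's implicit convention.
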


\begin{proof} 
    (i): This ``descent property'' immediately follows from the two minimization steps of Algorithm~\ref{algo:am}.

    (ii): After rearranging terms, the five-point property can be written as 
    \[
        \phi(x_{n+1},y_{n+1}) \leq \phi(x,y) + [\phi(x,y_n)-\phi(x_n,y_n)] - [\phi(x,y_{n+1})-\phi(x_{n+1},y_{n+1})].
    \]
    The $x$-update given by~\eqref{eq:am:step2} tells us that the quantities inside brackets are nonnegative. 
    Summing from $0$ to $n-1$ and using (i) we obtain 
    \[
        n \phi(x_n, y_n) \leq n \phi(x,y) + [\phi(x,y_0)-\phi(x_0,y_0)] - [\phi(x,y_n)-\phi(x_n,y_n)],
    \]
    and the right-hand side can be bounded by $n \phi(x,y) + [\phi(x,y_0)-\phi(x_0,y_0)]$. Dividing by $n$ we obtain the desired result.

    (iii): Similarly to (ii), \eqref{sfpp} can be written as 
    \[
        \phi(x_{n+1},y_{n+1}) \leq \phi(x,y) + (1-\lambda)[\phi(x,y_n)-\phi(x_n,y_n)] - [\phi(x,y_{n+1})-\phi(x_{n+1},y_{n+1})].
    \]
    Dividing both sides by $(1-\lambda)^{n+1}$ and summing from $0$ to $n-1$, we use (i) in the left-hand side and note that the right-hand side contains a telescopic sum. We are left with 
    \[
        \Big(\sum_{k=0}^{n-1} \Lambda^{k+1} \Big) \phi(x_n,y_n) \leq \Big(\sum_{k=0}^{n-1} \Lambda^{k+1} \Big) \phi(x,y) + [\phi(x,y_0)-\phi(x_0,y_0)],
    \]
    with $\Lambda=(1-\lambda)^{-1}$. An explicit computation of the geometric sum leads to the desired inequality.
\end{proof}

\paragraph{Related work.} In~\cite{CsiszarTusnady1984} Csiszár and Tusnády show that when \eqref{eq:5pp-CT} holds, without any further assumption on $X$, $Y$, or $\phi$, the alternating minimization method converges to its infimum, in the sense that $\phi(x_n,y_n)\to \phi_*$ as $n\to \infty$. But generally alternating minimization is studied in a more structured context where $X$ and $Y$ are Euclidean spaces and $\phi$ is convex. In this setting, and for more than two variables, i.e.\ cyclic block coordinate descent, \cite{Schechter1962,Auslender1971,Grippof1999} studied the global convergence of the iterates. Beck and Tetruashvili~\cite[Theorem 5.2]{Beck2013} were the first to show a sublinear rate of convergence, assuming in addition $\phi$ to be $L$-smooth. For functions of the form $\phi(x,y)=c(x,y)+h(y)+g(x)$, with $c,h,g$ convex and only $c$ $L$-smooth, Beck~\cite[Theorem 3.7]{Beck2015} also gave a sublinear rate. We refer to \cite{Wright2015} for a review on coordinate descent algorithms. Removing the assumption of convexity, the convergence of proximal iterates of alternating minimization has been studied by \cite{Attouch2007-sj,Attouch2010,Bolte2013} assuming that a Kurdyka-Łojasiewicz inequality holds. Examples of applications of such models to game theory and PDE domain decomposition were given in \cite[Sections 1.3 and 5]{Attouch2008alternating}. The specific case where the cost $c(x,y)$ is a Bregman divergence was considered in great detail in \cite{BausckeCombettesNoll2006}. There the authors gave a rigorous foundation to the problem and studied convergence of the iterates, without giving rates. We will return to Bregman alternating minimization in \Cref{sec:alt-bregman-prox}.

\begin{remark}[Other writings of the five-point property]\label{rmk:fpp_interpretation}
    Setting $F(x):=\inf_{y\in Y} \phi(x,y)$, \eqref{fpp} gives us a lower bound on $F$, for all $x \in X$, $y,y_0\in Y$,
   \begin{equation}\label{eq:bounds_f_phi}
       \phi(S_\phi(y_0),y_{0})+\phi(x,T_\phi \circ S_\phi (y_0)) -\phi(x,y_0)  \leq F(x) \le \phi(x,y),
   \end{equation}
   In a majorization-minimization style similar to \cite[Section 5]{Byrne2012}, setting $G_n(x)=\phi(x,y_n)-F(x)$, \eqref{sfpp} can be written as follows, for all $x\in X$, $n\in \N$, 
   \begin{equation}\label{eq:bounds_maj_min}
       F(x_n)+G_n(x_n) + \frac{1}{1-\lambda}G_{n+1}(x) -G_{n}(x)\leq F(x) \le F(x)+G_{n+1}(x),% \forall x,n.
   \end{equation} 
   There is no gap at $x_n$ between the bounds. Moreover, for $\lambda=0$, \eqref{fpp} directly implies that the gap ($\phi(x_*,y_n)-\phi(x_n,y_n)$) at $x_*$  diminishes as $n$ increases. This can be further discussed through estimate sequences \cite[Section 2.2.1]{Nesterov2018}, roughly interpretable as convex combinations of upper and lower bounds \cite{Baes2009estimate,Drusvyatskiy2018}. Indeed setting $\phi_n(x)=\phi(x,y_{n+1})$, and 
    \begin{equation}\label{eq:rate_estimate_seq}
        \alpha_n=\max\left(\lambda \frac{\phi(x_*,y_{n+1})-\phi(x_{n+1},y_{n+1})}{(1-\lambda)(\phi(x_*,y_{n})-\phi(x_{n},y_{n}))}, \frac{\phi(x_{n+1},y_{n+1})-f_*}{\phi(x_*,y_{n})-\phi(x_{n},y_{n})}\right),
    \end{equation}
    we have that $\alpha_n\in[0,1]$ and $(\phi_n,\lambda_n)_{n\in\N}$ is a weak estimate sequence at $x_*$ for  $\lambda_{n}=\Pi_{i=0}^{n-1}(1- \alpha_i)$, which entails $f(x_n)-f_*=O(\lambda_n)$. We detail this point in \Cref{lem:estimate-seq} in the Appendix, and refer to \cite[p6]{zhang2018estimate} for the definition of weak estimate sequences as a relaxation of \cite[p84]{Nesterov2018}.

    A related way of interpreting \eqref{fpp} is to see it defines a Lyapunov potential function $V_n$ for alternating minimization (see \cite{Bansal2019} for a review of use cases, \cite[Section 4.4, p55]{dAspremont2021} for the connection with estimate sequences). As a matter of fact, for any given $(x,y)\in X\times Y$, set
    \begin{equation}\label{eq:lyapunov}
        V_n(x',y')\coloneqq n(\phi(x',y')-\phi(x,y))+\phi(x,y')-f_*.
    \end{equation}
    By \eqref{fpp}, we have that $V_{n+1}(x_{n+1},y_{n+1})\le V_{n}(x_{n},y_{n})$, which induces the sublinear rate $n(\phi(x_{n},y_{n})-\phi(x,y))\le V_0(x,y_{0})$.
\end{remark}

\subsection{Background on cross-curvature} \label{sec:background:mtw}

This section contains the needed optimal transport material to establish our semi-local criterion for the five-point property (Theorem~\ref{thm:sc-fpp-am}). To start with we suppose that $c\in C^4(X\times Y)$, and will give more precise assumptions later in \ref{ass:cross-curv}. We start by defining the c-exponential map~\cite{mtw,Loeper2009}. When $x$ and $y$ are fixed, the equation 
\begin{equation} \label{eq:xi-c-exp}
    \xi=-\nabla_xc(x,y)
\end{equation}
defines a tangent vector $\xi$ at $x$ (or more precisely a covector). This vector tells us in which direction $c(\cdot,y)$ decreases the fastest at $x$. 

\begin{definition}[c-exponential map] \label{def:c-exp}
    When considering $x$ and $\xi$ fixed, the point $y$ defined by \eqref{eq:xi-c-exp} is written as 
    \begin{equation}\label{eq:c-exp}
        y=\cexp_x(\xi),
    \end{equation}
    when it exists and is unique, and $\cexp$ is known as the c-exponential map.
\end{definition}
The c-exponential map is a generalization of the Riemannian exponential map~\cite[Chapter 5]{Petersen_book}. To see why, let $(M,\mathsf{g})$ be a Riemannian manifold and take $X=Y=M$ together with the cost $c(x,y)=\frac12d^2(x,y)$, where $d$ is the Riemannian distance. The Riemannian exponential map takes as input a point $x\in M$ and a vector $\xi\in T_xM$ and outputs the point $y=\exp_x(\xi)$ resulting from shooting at $x$ a constant-speed geodesic  with initial velocity $\xi$ until time $1$. In that situation it can be shown that $\xi$ can be recovered from $x$ and $y$ via the relation $\xi=-\nabla_x\frac12d^2(x,y)$, and by analogy inverting \eqref{eq:xi-c-exp} into \eqref{eq:c-exp} motivates the naming of c-exponential.

\begin{example}[Lagrangian costs] \label{ex:lagrangian-cost}
    Suppose that $X=Y$ is a subset of a finite-dimensional vector space and define the cost
    \begin{equation} \label{eq:lagrangian-cost}
        c(x,y)=\inf_q \int_0^1 L(q(t),\dot q(t))\,dt,
    \end{equation}
    where the infimum runs over all paths $q$ joining $x$ to $y$ and where $L(x,v)dt$ is a ``Lagrangian'' function measuring the cost of moving from $x$ to $x+vdt$ during time $dt$. For example the Riemannian distance squared corresponds to $L(x,v)=\mathsf{g}_x(v,v)$ where $\mathsf{g}$ is the Riemannian metric. Under some structure conditions on $L$, in particular strict convexity in the second variables, \eqref{eq:lagrangian-cost} defines a well-behaved cost, see~\cite[Chapter 7]{Villani_book_2009}. 

    Introducing the Hamiltonian $H(x,p)$ which is the convex conjugate of $L$ in the second variable, a solution $q$ to the variational problem~\eqref{eq:lagrangian-cost} then satisfies Hamilton's equations. These are a first-order ordinary differential equation on $(q(t),p(t))$, and the variable $p$ is called the \emph{momentum}. We then have the identity 
    \[
        -\nabla_xc(x,y)=p(0).
    \]
    In other words, the quantity $-\nabla_xc(x,y)$ is the initial momentum of the ``physical'' path joining $x$ to $y$ (the path solving Hamilton's equations). 
\end{example}

The next notion is an extension of straight lines (segments) in Euclidean spaces.

\begin{definition}[$c$-segments] \label{def:c-segments}
    A curve 
    $s\mapsto (x(s),y)$ in $X\times Y$ is said to be a horizontal $c$-segment or simply a $c$-segment if 
    \[
        \frac{d^2}{ds^2}\nabla_yc(x(s),y) = 0.
    \]
    Similarly a curve $t\mapsto (x,y(t))$ is said to be a vertical $c$-segment if 
    \[
        \frac{d^2}{dt^2}\nabla_xc(x,y(t))=0.
    \]
\end{definition}

Here is how we may think about $c$-segments. For a fixed $x\in X$, the object $\xi(t)\coloneqq -\nabla_xc(x,y(t))$ defines a tangent vector at $x$ (or rather a cotangent vector), thus as $t$ varies all the $\xi(t)$ live in the same space and can be compared. Rephrased in terms of the c-exponential map defined by \eqref{eq:c-exp}, if $\ddot\xi(t)=0$, i.e.\ $\xi(t)$ is a classical segment in the tangent space at $x$ (which is a vector space), then $t\mapsto \big(x,\cexp_x(\xi(t))\big)$ is a vertical $c$-segment. By exchanging the roles of $x$ and $y$, horizontal $c$-segments can be understood in a similar way.

\begin{definition}[Cross-curvature] \label{def:cross-curvature}
    Let $(x,y)$ be a point in $X\times Y$, $\xi$ a tangent vector at $x$ and $\eta$ a tangent vector at $y$. Let $s\mapsto (x(s),y)$ be a horizontal $c$-segment starting at $x(0)=x$ with initial velocity $\dot x(0)=\xi$, and let $t\mapsto y(t)$ be any curve starting at $y(0)=y$ with initial velocity $\dot y(0)=\eta$. Then the \emph{cross-curvature} or \emph{Ma--Trudinger--Wang tensor} (MTW for short) is defined by 
    \begin{equation} \label{eq:def-cc}
        \S_c(x,y)(\xi,\eta) = -\left.\frac{\partial^4}{\partial s^2\partial t^2}\right|_{s=t=0} c(x(s),y(t)).
    \end{equation}
    We then say that $c$ has \emph{nonnegative cross-curvature} if for all $x,y,\xi,\eta$,
    \[
        \S_c(x,y)(\xi,\eta) \ge 0.
    \]
\end{definition}

The MTW tensor was introduced by Ma, Trudinger and Wang in \cite{mtw} (with a different multiplicative constant). They identified positivity of $\S_c$ on orthogonal $(\xi,\eta)$ as a key ingredient to obtain regularity for the solutions to the optimal transport problem with cost $c(x,y)$. Later on Kim and McCann \cite{KimMcCann2010} understood the geometric nature of $\S_c$ by reframing it as the curvature of a natural pseudo-Riemannian metric based on $c$. The naming of ``cross-curvature'' is due to them and the coordinate-free form \eqref{eq:def-cc} can be found in~\cite[Lemma 4.5]{KimMcCann2010} (with a different multiplicative constant). We refer to \cite{KimMcCann2010,KimMcCann2012} and \cite[Chapter 12]{Villani_book_2009} to learn more on this topic.

\begin{remark} \label{rem:sym-cc}
    In \eqref{eq:def-cc} we require $x(s)$ to be a $c$-segment. The formula is equally valid if instead $t\mapsto (x,y(t))$ is a $c$-segment and $x(s)$ is any curve.
\end{remark}

We continue and define the \emph{cross-difference} considered by McCann in \cite{McCann_glimpse2014,McCann1999}, 
\begin{equation}\label{eq:def_cross_diff}
    \delta_c(x',y';x,y) \coloneqq c(x,y')+c(x',y)-c(x,y)-c(x',y').
\end{equation}
In optimal transport and matching models~\cite[Chapter 2]{Galichon_book}, $c(x,y)$ measures the cost of matching $x$ with $y$. Then $\delta_c(x',y';x,y)$ can be interpreted as the positive or negative loss incurred by a central planner changing assignment $\{x\to y,x'\to y'\}$ to assignment $\{x\to y',x'\to y\}$. 

Let us record a few elementary but useful properties of the object $\delta_c(-,-,-,-)$. It is linear in $c$, skew-symmetric in the slots $(1,3)$ as well as $(2,4)$ and has the ``interchange symmetry'' $\delta_c(x',y';x,y)=\delta_c(x,y;x',y')$. Additionally $\delta_c$ only depends on the actual interaction between $x$ and $y$ in the sense that if $\phi(x,y)=c(x,y)+g(x)+h(y)$, then 
\begin{equation} \label{eq:prop_cross_diff}
    \delta_\phi(x',y';x,y) = \delta_c(x',y';x,y).
\end{equation}

Before stating the next result, we now make more precise the assumptions needed when working with $c$-segments and cross-curvature:

\begin{enumerate}[resume*=ass] 
    \item $c\in C^4(X\times Y)$ and: \label{ass:cross-curv}
    
    \begin{enumerate}[\normalfont\textbf{(A\arabic{enumi}\alph*)}]
        \item for all $(x,y)\in X\times Y$, the $d\times d$ matrix $\nabla^2_{xy}c(x,y)$ is invertible; \label{ass:non-degenerate}
        
        \item for any $x,x'\in X$ and $y,y'\in Y$, the set $X$ contains a horizontal $c$-segment $(s\in [0,1])\mapsto (x(s),y)$ with endpoints $x(0)=x$ and $x(1)=x'$ and the set $Y$ contains a vertical $c$-segment $(t\in [0,1]) \mapsto (x,y(t))$ with endpoints $y(0)=y$ and $y(1)=y'$. \label{ass:biconvex}
    \end{enumerate}
\end{enumerate}
We require the cost to be four times differentiable since four derivatives are taken in \eqref{eq:def-cc}. Assumption \ref{ass:non-degenerate} is sometimes called non-degeneracy of the cost. Note that it forces $X$ and $Y$ to have the same dimension. The need to invert $\nabla^2_{xy}c$ is clear with coordinate formulations, see \eqref{eq:horizontal-c-segments}, \eqref{eq:vertical-c-segments}, \eqref{eq:cc-coord}. Assumption \ref{ass:biconvex} says that we generally need $X\times Y$  to be ``biconvex'' to ensure that $c$-segments between points do actually exist.

The next result contains the form in which in practice we will use nonnegative cross-curvature. It is a characterization of cross-curvature due to Kim and McCann.

\begin{lemma}[{\cite[Theorem 2.10]{KimMcCann2012}}] \label{lemma:delta-c-segments}
    Suppose that \ref{ass:cross-curv} holds and that $c$ has nonnegative cross-curvature. Fix $x,x'\in X$ and $y,y'\in Y$ and let $(s\in [0,1])\mapsto (x(s),y)$ be a horizontal $c$-segment with endpoints $x(0)=x$ and $x(1)=x'$. Then
    \begin{equation}\label{eq:diff-cost}
        s\mapsto -c(x(s),y')+c(x(s),y) \text{ is a convex function.}
    \end{equation}
    As a direct consequence,
    \begin{equation}\label{eq:delta-and-path}
        \delta_c(x',y';x,y) \geq  -\bracket{\nabla_xc(x,y') - \nabla_xc(x,y),\dot{x}(0)},
    \end{equation}
    where $\dot x(s)$ stands for $\frac{d x(s)}{d s}$.    
\end{lemma}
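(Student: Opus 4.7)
The second inequality~\eqref{eq:delta-and-path} will follow from the convexity~\eqref{eq:diff-cost} in a completely elementary way. Indeed, writing $h(s) \coloneqq -c(x(s),y') + c(x(s),y)$, a direct computation gives
\[
    h(1) - h(0) = \delta_c(x',y';x,y), \qquad h'(0) = -\bracket{\nabla_x c(x,y') - \nabla_x c(x,y), \dot x(0)}.
\]
For a convex function on $[0,1]$ one has $h(1) \ge h(0) + h'(0)$, which is exactly the asserted inequality. So the whole content of the lemma lies in establishing~\eqref{eq:diff-cost}.

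For the convexity, I would fix an arbitrary $s_0 \in [0,1]$ and show $h''(s_0) \ge 0$. The key device is to introduce, by assumption~\ref{ass:biconvex}, a vertical $c$-segment $y^{s_0}(t)$ \emph{based at $x(s_0)$} connecting $y^{s_0}(0)=y$ to $y^{s_0}(1)=y'$, and to consider
\[
    G(t) \coloneqq -\partial_s^2 c(x(s), y^{s_0}(t))\big|_{s=s_0}, \qquad t \in [0,1].
\]
Since $G(0) = -\partial_s^2 c(x(s), y)|_{s=s_0}$ and $G(1) = -\partial_s^2 c(x(s), y')|_{s=s_0}$, we have $h''(s_0) = G(1)-G(0)$, so it suffices to prove that $G$ is convex on $[0,1]$ with $G'(0) = 0$; the convexity then forces $G(1) \ge G(0) + G'(0) = G(0)$.

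The two properties of $G$ use the two $c$-segment hypotheses at different base points. First, commuting derivatives,
\[
    G'(0) = -\dot y^{s_0,k}(0) \, \partial_s^2 c_{\bar k}(x(s), y)\big|_{s=s_0},
\]
and this vanishes because $x(s)$ is a horizontal $c$-segment at $y$, so by definition $s \mapsto \nabla_y c(x(s),y)$ is affine, and in particular $\partial_s^2 c_{\bar k}(x(s),y)=0$. Second,
\[
    G''(t) = -\partial_s^2 \partial_t^2 c(x(s), y^{s_0}(t))\big|_{s=s_0},
\]
and since $y^{s_0}(\cdot)$ is a vertical $c$-segment at $x(s_0)$, Remark~\ref{rem:sym-cc} identifies this with the cross-curvature $\S_c(x(s_0), y^{s_0}(t))(\dot x(s_0), \dot y^{s_0}(t))$, which is nonnegative by the nonnegative cross-curvature hypothesis. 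This completes the argument.

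The main subtlety, and where I expect the presentation to require care, is precisely that the two $c$-segments involved are $c$-segments relative to \emph{different} base points: $x(s)$ is horizontal at $y$, while $y^{s_0}(t)$ is vertical at $x(s_0)$. These play complementary roles: the former yields the first-order identity $G'(0)=0$, whereas the latter is exactly what is needed, via Remark~\ref{rem:sym-cc}, to interpret $G''(t)$ as an instance of $\S_c \ge 0$. Without the symmetric formulation provided by Remark~\ref{rem:sym-cc}, the identification of $G''$ with cross-curvature would fail for $t\neq 0$.
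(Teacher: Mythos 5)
Your derivation of \eqref{eq:delta-and-path} from \eqref{eq:diff-cost} is exactly the paper's argument (the tangent-line inequality $h(1)\geq h(0)+h'(0)$ for the convex function $h(s)=-c(x(s),y')+c(x(s),y)$). Where you differ is on \eqref{eq:diff-cost} itself: the paper simply cites \cite[Theorem 2.10]{KimMcCann2012} for the convexity, while you supply a self-contained proof. Your argument is correct and is essentially the standard Kim--McCann proof: fixing $s_0$, introducing the vertical $c$-segment $y^{s_0}(t)$ based at $x(s_0)$ (which exists by \ref{ass:biconvex}), and setting $G(t)=-\partial_s^2 c(x(s),y^{s_0}(t))|_{s=s_0}$, you correctly get $h''(s_0)=G(1)-G(0)$, the identity $G'(0)=0$ from the horizontal $c$-segment condition $\partial_s^2\nabla_y c(x(s),y)=0$, and $G''(t)=\S_c(x(s_0),y^{s_0}(t))(\dot x(s_0),\dot y^{s_0}(t))\geq 0$ via Remark~\ref{rem:sym-cc}, since at $t\neq 0$ only the vertical curve is a $c$-segment at the relevant base point. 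Your closing observation about the two $c$-segments living at different base points, and the essential role of the symmetric form of the cross-curvature formula, is exactly the right subtlety to flag. The only cost of your route is length; what it buys is that the lemma no longer rests on an external citation.
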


\begin{remark}
    \cite[Theorem 2.10]{KimMcCann2012} actually shows that nonnegative cross-curvature is \emph{equivalent} to \eqref{eq:diff-cost} holding for all $x,x',y,y'$.
\end{remark}

\begin{proof}[Proof of \Cref{lemma:delta-c-segments}]
    Statement \eqref{eq:diff-cost} is contained in \cite[Theorem 2.10]{KimMcCann2012}. For \eqref{eq:delta-and-path}, let $b(s)=-c(x(s),y')+c(x(s),y)$. By convexity of $b$ we have $b(s)-b(0)\geq s\,b'(0)$. At $s=1$ this is \eqref{eq:delta-and-path}.
\end{proof}

\paragraph{How to compute with cross-curvature?} When doing calculations involving cross-curvature, it is preferable to use coordinates. Following~\cite{KimMcCann2010} we denote $x$-derivatives by unbarred indices: 
\[
    \partial_i\coloneqq \frac{\partial}{\partial x^i}, \quad \partial_{ij}\coloneqq \frac{\partial^2}{\partial x^i\partial x^j},\quad\text{etc},
\] 
while $y$-derivatives are denoted with barred indices:  
\[
    \partial_\ib\coloneqq \frac{\partial}{\partial y^\ib}, \quad \partial_{\ib\jb}\coloneqq \frac{\partial^2}{\partial y^\ib\partial y^\jb},\quad\partial_{i\jb}\coloneqq \frac{\partial^2}{\partial x^i\partial y^\jb}, \quad\text{etc}.
\]
We also write $c_i=\partial_ic$, $c_\ib=\partial_\ib c$, $c_{ij}=\partial_{ij}c$, and so on, and $c^{\jb i}$ denotes the inverse of the matrix $c_{i\jb}$, which exists by \ref{ass:non-degenerate}. Finally we adopt the Einstein summation convention where summation over repeated indices is not explicitly written.

Let us first look at $c$-segments. Computing the $s$-derivatives explicitly we see that $(x(s),y)$ is a horizontal $c$-segment when $\frac{d}{ds}[c_{j\kb}(x(s),y)\dot x^j]=0$, i.e.\ $c_{j\kb}(x(s),y)\ddot x^j + c_{ij\kb}(x(s),y)\dot x^i\dot x^j=0$. This last expression can be written as
\begin{equation} \label{eq:horizontal-c-segments}
    \ddot{x}^k+c^{k\mb}c_{\mb i j} \dot{x}^i\dot{x}^j=0,
\end{equation}
where the quantities involving $c$ are evaluated at $(x(s),y)$. Similarly, $t\mapsto (x,y(t))$ is a vertical $c$-segment when 
\begin{equation} \label{eq:vertical-c-segments}
    \ddot{y}^\kb + c^{\kb m} c_{m \ib\jb}\dot{y}^\ib\dot{y}^\jb=0,
\end{equation}
where the quantities involving $c$ are evaluated at $(x,y(t))$. If \eqref{eq:horizontal-c-segments} and \eqref{eq:vertical-c-segments} look like geodesic equations, it is because they are, see the next paragraph about the Kim--McCann geometry. Cross-curvature can be written in coordinates as 
\begin{equation} \label{eq:cc-coord}
    \S_c(x,y)(\xi,\eta) = (c_{ik\mb}c^{\mb r}c_{r\jb\lb}-c_{i\jb k\lb})\xi^i\eta^\jb\xi^k\eta^\lb.
\end{equation}
This is usually the formulation used when computing the cross-curvature of a cost given by a formula, as in Example~\ref{ex:costs-with-nonnegative-curvature}.

\paragraph{The Kim--McCann geometry.} We have introduced the notions of $c$-segments and cross-curvature directly, without justification. All these concepts are in fact geometric in nature when understood through the lens of the Kim--McCann geometry. Given the triplet $(X,Y,c)$, Kim and McCann introduced in \cite{KimMcCann2010} a pseudo-Riemannian metric on the product space $X\times Y$ given by the cross-derivatives $-\nabla^2_{xy}c$. More precisely, the pseudo-metric at point $(x,y)\in X\times Y$ in direction $(\xi,\eta)\in T_{(x,y)}(X\times Y)$ is given by 
\[
    -\nabla^2_{xy}c(x,y)(\xi,\eta).
\]
Note that this is indeed quadratic in $(\xi,\eta)$. 
This metric is intimately tied to the cross-difference, since as $\xi,\eta\to0$ we have
\[
    \delta_c(x+\xi,y+\eta;x,y) = -\nabla^2_{xy}c(x,y)(\xi,\eta) + o(\abs{\xi}^2+\abs{\eta}^2).
\]
This metric treats $X$ and $Y$ as pure topological manifolds and only depends on $c(x,y)$ and not on any Euclidean or Riemannian structure that may exist on $X$ and $Y$.
Kim and McCann show that $c$-segments are geodesics for this metric. More specifically they are the particular geodesics for which either the $x$- or the $y$-component is constant in time. Moreover Kim and McCann show that cross-curvature is equivalent to the Riemann curvature tensor, in the sense that it is the (unnormalized) sectional curvature of their metric. As a consequence of these results we have the following facts.

\begin{prop}\label{prop:facts-cc}
    Let $c(x,y)$ be a cost on $X\times Y$ satisfying \ref{ass:non-degenerate}.
    \begin{enumerate}[(i)]
        \item Let $\phi(x,y)=c(x,y)+g(x)+h(y)$. Then $\S_\phi=\S_c$. \label{prop:facts-cc:gh}
        \item $\S_c$ is invariant by a change of coordinates on $X$ or on $Y$. In particular formula~\eqref{eq:cc-coord} is valid in any coordinates. \label{prop:facts-cc:invariant}
        \item Define $\tilde c(y,x)=c(x,y)$ on $Y\times X$. Then $\S_c(x,y)(\xi,\eta)=\S_{\tilde c}(y,x)(\eta,\xi)$. \label{prop:facts-cc:symmetry}
    \end{enumerate}
\end{prop}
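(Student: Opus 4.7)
For (i), I would first observe that horizontal $\phi$-segments coincide with horizontal $c$-segments. Indeed $\nabla_y\phi(x(s),y)=\nabla_yc(x(s),y)+\nabla h(y)$, and the second term is $s$-independent, so $\partial_s^2\nabla_y\phi=\partial_s^2\nabla_yc$; symmetrically for vertical segments. It then suffices to compute, at $s=t=0$,
\[
\partial_s^2\partial_t^2\phi(x(s),y(t))=\partial_s^2\partial_t^2c(x(s),y(t))+\partial_s^2\partial_t^2 g(x(s))+\partial_s^2\partial_t^2 h(y(t)),
\]
where the last two mixed derivatives vanish because each term depends on only one of $s,t$. Thus $\S_\phi=\S_c$. (This is the same phenomenon as in \eqref{eq:prop_cross_diff}, upgraded from second to fourth order.)

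For (ii), the key point is that \eqref{eq:def-cc} refers only to intrinsic data: the points $x,y$, the tangent vectors $\xi,\eta$, the $c$-segment (itself determined by the coordinate-free condition $\partial_s^2\nabla_yc(x(s),y)=0$), and the function $c$ evaluated along curves. Hence $\S_c(x,y)(\xi,\eta)$ is a well-defined scalar, manifestly invariant under diffeomorphisms of $X$ or $Y$. Formula \eqref{eq:cc-coord} is the expansion of this scalar in coordinates, obtained by differentiating $c(x(s),y(t))$ twice in $s$ and twice in $t$ and eliminating $\ddot{x}(0)$ via the coordinate ODE \eqref{eq:horizontal-c-segments}; nothing in this derivation singles out a particular chart, so the formula produces the same value in any coordinates. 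Alternatively one may invoke Kim--McCann's identification of $\S_c$ with a sectional curvature of their pseudo-Riemannian metric, which is automatically a scalar.

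For (iii), I would use Remark~\ref{rem:sym-cc} to compute $\S_c(x,y)(\xi,\eta)$ via a vertical $c$-segment $\tau\mapsto(x,y(\tau))$ with $\dot y(0)=\eta$ and any curve $\sigma\mapsto x(\sigma)$ with $\dot x(0)=\xi$. Since $\nabla_x\tilde c(y,x)=\nabla_xc(x,y)$, the condition $\partial_s^2\nabla_x\tilde c(y(s),x)=0$ defining a horizontal $\tilde c$-segment is exactly the condition defining a vertical $c$-segment, so the two coincide. Applying \eqref{eq:def-cc} to $\tilde c$ at $(y,x)$ with vectors $(\eta,\xi)$ and using $\tilde c(y(s),x(t))=c(x(t),y(s))$ gives $\S_{\tilde c}(y,x)(\eta,\xi) = -\partial_s^2\partial_t^2|_{s=t=0}\,c(x(t),y(s))$, which after renaming $s\leftrightarrow\tau$ and $t\leftrightarrow\sigma$ matches the expression for $\S_c(x,y)(\xi,\eta)$ exactly.

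The only delicate step is (ii): one might be tempted to prove coordinate invariance by computing the transformation laws of $c_{ijk\bar m}$ and $c^{\bar m r}$ and verifying that their combination in \eqref{eq:cc-coord} is tensorial, which is tedious and error-prone. The cleaner route I would follow is to rely on the manifestly intrinsic form \eqref{eq:def-cc}, from which both coordinate invariance and the validity of \eqref{eq:cc-coord} in any chart become immediate.
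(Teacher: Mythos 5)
Your proof is correct, but it takes a different route from the paper. The paper does not verify these facts directly: it derives them as corollaries of the Kim--McCann identification, namely that $\S_c$ is the sectional curvature of the pseudo-metric $-\nabla^2_{xy}c$ on $X\times Y$ and that $c$-segments are its geodesics, so (i) follows because the metric only sees the cross-derivatives, (ii) because curvature is tensorial, and (iii) because the swap $X\leftrightarrow Y$ is an isometry of that metric. You instead verify everything straight from Definition~\ref{def:cross-curvature}: for (i) you check that $\phi$-segments equal $c$-segments and that the mixed fourth derivative kills $g(x(s))$ and $h(y(t))$; for (ii) you observe that \eqref{eq:def-cc} is built only from chart-independent data (the $c$-segment condition lives in the fixed vector space $T_y^*Y$, so it too is coordinate-free); for (iii) you use Remark~\ref{rem:sym-cc} to swap which factor carries the $c$-segment. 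This is more elementary and self-contained — it does not require accepting the equivalence between \eqref{eq:def-cc} and the Kim--McCann sectional curvature — at the cost of not explaining \emph{why} these invariances hold geometrically. The one point worth making explicit in your argument (it also underlies Remark~\ref{rem:sym-cc}, which you invoke) is that the value of $\partial_s^2\partial_t^2 c(x(s),y(t))$ a priori depends on $\ddot y(0)$ through the term $\frac{d^2}{ds^2}\bigl[\nabla_yc(x(s),y)\bigr]\cdot\ddot y(0)$, and it is precisely the $c$-segment condition on $x(s)$ that makes this term vanish; this is what makes the definition, and hence your invariance arguments, well posed.
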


We conclude this section with a small zoology of costs with nonnegative cross-curvature (proofs in \Cref{lemma:costs-with-nonnegative-curvature} in the appendix). See also~\cite{LeeLi2012} for additional examples based on a Riemannian distance. 

\begin{example}[Costs with nonnegative cross-curvature] \label{ex:costs-with-nonnegative-curvature}
    Let $X,Y$ be open subsets of $\Rd$. 
    \begin{enumerate}[1.]
        \item The quadratic cost $c(x,y)=\norm{x-y}^2$ where  $\norm{\cdot}$ is a Euclidean norm satisfies  $\S_c=0$.
        
        \item More generally, $c(x,y)=\norm{A(x)-B(y)}^2$ satisfies  $\S_c=0$. Here $A\colon X\to\Rd$ and $B\colon Y\to\Rd$ are smooth diffeomorphisms onto their image.
        
        \item\label{ex:costs-with-nonnegative-curvature:bregman} 
        Bregman divergences $c(x,y)=u(x|y)\coloneqq u(x)-u(y)-\bracket{\nabla u(y),x-y}$ satisfy  $\S_c=0$.
        
        \item Let $c(x,y)=\sum_ie^{x_i-y_i}$, or $c(x,y)=\sum_i\nu_ie^{(x_i-y_i)/\eps}$ with $\eps\neq 0$ and $\nu_i\neq 0$. Then $\S_c=0$.    
        
        \item More generally let $c(x,y)=\sum_{ij} e^{(x_i-y_j)/\eps}K_{ij}$ where $K$ is a $d\times d$ invertible matrix and $\eps\neq 0$. Then $\S_c=0$. 
        
        \item The log-divergence $c(x,y)=u(x)-u(y)+\frac1\alpha\log(1-\alpha\bracket{\nabla u(y),x-y})$ satisfies $\S_c(\xi,\eta) = 2\alpha \big[\nabla^2_{xy}c(x,y)(\xi,\eta)\big]^2$ \cite{WongYang2022}. Therefore for $\alpha>0$ we have $\S_c\geq 0$. See \Cref{ex:log-div} for more background on the log-divergence.
        
        \item More generally any cost of the form $c(x,y)=\log\big(A_0+A_1(y)+\bracket{A_2(x),A_3(y)}\big)$ where the functions $A_i$, $1\leq i\leq 3$ are diffeomorphisms, satisfies $\S_c(\xi,\eta) = 2\big[\nabla^2_{xy}c(x,y)(\xi,\eta)\big]^2\geq 0$.
        
        \item The square of the geodesic distance on a sphere has nonnegative cross-curvature \cite{KimMcCann2012}.
        
        \item (Tensor products \cite{KimMcCann2012}) Let $\tilde X$ and $\tilde Y$ be two additional subsets of $\Rd$. Let $c$ be a cost on $X\times Y$ and $\tilde c$ a cost on $\tilde X\times \tilde Y$ and suppose that $c$ and $\tilde c$ both have nonnegative cross-curvature. Then the cost $C((x,\tilde x),(y,\tilde y))=c(x,y)+c(\tilde x,\tilde y)$ on $(X\times \tilde X)\times (Y\times \tilde Y)$ has nonnegative cross-curvature.
    \end{enumerate}
\end{example}

\subsection{A semi-local criterion for the five-point property} \label{sec:am-sufficient-fpp}

The five-point property \eqref{fpp} is a \emph{nonlocal} inequality, which makes it in general hard to prove directly. In this section, by connecting \eqref{fpp} to the rich theory developed in the past two decades in optimal transport (including cross-difference, $c$-segments and cross-curvature) we derive a semi-local criterion for \eqref{fpp} that can be used for a certain class of functions $\phi$. We explain what we mean by ``semi-local'' in the discussion following \Cref{thm:sc-fpp-am}.

First let us draw a parallel to a well-known situation (which we borrow from \cite[Chapter 26]{Villani_book_2009}): standard convexity of a function $q\colon\R\to\R$. The ``synthetic'' definition of convexity is that $q((1-t)a+tb)\leq (1-t)q(a)+tq(b)$ for all $a,b\in\R$ and all $t\in [0,1]$. This is a property which one can use once convexity of $q$ is known but, in general, this is not \emph{how} convexity is obtained for a particular $q$. Instead, if $q$ is twice differentiable, one rather uses the ``analytic'' definition $q''(t)\geq 0$ which is a local inequality and usually much easier to prove when $q$ is given by a formula. Similarly, the five-point property \eqref{fpp} can be thought of as a synthetic definition while Theorem~\ref{thm:sc-fpp-am} offers an analytic criterion.

Let us start by writing our function $\phi$ as the sum of three terms,
\begin{equation} \label{eq:phi-split}
    \phi(x,y)=c(x,y)+g(x)+h(y).
\end{equation}
Note that we can always do this, even if it means taking $g(x)=0$, $h(y)=0$ and $c(x,y)=\phi(x,y)$. 
We introduce $c$ for several reasons. 
First note that if $\phi$ is written as \eqref{eq:phi-split} we have $\nabla^2_{xy}\phi=\nabla^2_{xy}c$. In the previous section we reviewed the notions of cross-difference, $c$-segments and cross-curvature. These are all based on a function $c(x,y)$. Since they can all be connected to the Kim--McCann metric which is defined by the cross-derivatives of the cost $-\nabla^2_{xy}c$, we see that these notions are all invariant under the addition of functions of a single variable such as $g(x)$ or $h(y)$ to the cost $c(x,y)$. In other words $c$-segments and $\phi$-segments are the same thing, $\S_c=\S_\phi$ and $\delta_c=\delta_\phi$. In the same way assumption \ref{ass:cross-curv} is the same for $c$ and $\phi$.

Another reason to write $\phi$ as \eqref{eq:phi-split} is that this is the form of our gradient descent with general cost in Section~\ref{sec:gd}. 
Additionally, many problems come naturally in the form~\eqref{eq:phi-split}, where $c(x,y)$ may represent the ``pure interaction'' between $x$ and $y$ while $g$ and $h$ are ``regularizers'' that ensure that $x$ and $y$ belong to certain spaces.

\begin{theorem}[Sufficient conditions for the five-point property] \label{thm:sc-fpp-am}
    \leavevmode 
    
    \noindent Suppose that \ref{ass:phi-min-min} and \ref{ass:cross-curv} hold. Suppose that $\phi$, or equivalently $c$ in \eqref{eq:phi-split}, has nonnegative cross-curvature. Define $F(x)=\inf_{y\in Y}\phi(x,y)$ and assume that $F$ is differentiable on $X$. 
    \begin{enumerate}[(i)]
        \item If $t\mapsto F(x(t))$ is convex on every $c$-segment $t\mapsto (x(t),y)$ satisfying $\nabla_x\phi(x(0),y)=0$, then $\phi$ satisfies the five-point property \eqref{fpp}.
        
        \item Let $\lambda>0$. If $t\mapsto F(x(t))-\lambda \phi(x(t),y)$ is convex on the same $c$-segments as for \textup{(i)}, then $\phi$ satisfies the strong five-point property \eqref{sfpp}.
    \end{enumerate}
\end{theorem}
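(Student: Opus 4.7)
The plan is to reduce the five-point inequality to a combination of three ingredients: (a) the envelope formula for $\nabla F$, (b) the first-order consequence of convexity of $F$ (resp.\ $F-\lambda\phi(\cdot,y_0)$) along a well-chosen $c$-segment, and (c) the cross-curvature inequality of \Cref{lemma:delta-c-segments}. The main obstacle is organizational: rewriting the goal until it matches exactly the combination of these three bounds.

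\textbf{Setup.} Fix $y_0\in Y$, set $x_0=S_\phi(y_0)$, $y_1=T_\phi(x_0)$, and fix an arbitrary $x\in X$. By \ref{ass:biconvex} we can choose a horizontal $c$-segment $s\mapsto (x(s),y_0)$ with $x(0)=x_0$ and $x(1)=x$. Since $x_0=S_\phi(y_0)$ we have $\nabla_x\phi(x_0,y_0)=0$, so the hypothesis on convexity along such $c$-segments applies. Also, by minimality of $y_1$ we have $F(x_0)=\phi(x_0,y_1)$, and the envelope theorem gives $\nabla F(x_0)=\nabla_x\phi(x_0,y_1)$.

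\textbf{Reducing \eqref{fpp} to a cross-difference inequality.} After taking the infimum over $y$ on the right-hand side, \eqref{fpp} becomes
\[
    F(x)\ \geq\ \phi(x,y_1)-\phi(x,y_0)+\phi(x_0,y_0).
\]
By the assumed convexity of $t\mapsto F(x(t))$, we have the first-order lower bound $F(x)\geq F(x_0)+\bracket{\nabla F(x_0),\dot x(0)}=\phi(x_0,y_1)+\bracket{\nabla_x\phi(x_0,y_1),\dot x(0)}$. Plugging this in, it suffices to prove
\[
    \delta_\phi(x,y_1;x_0,y_0)+\bracket{\nabla_x\phi(x_0,y_1),\dot x(0)}\ \geq\ 0,
\]
where $\delta_\phi(x,y_1;x_0,y_0)=\phi(x_0,y_1)+\phi(x,y_0)-\phi(x_0,y_0)-\phi(x,y_1)$.

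\textbf{Closing with cross-curvature.} Since $\nabla_x\phi(x_0,y_0)=0$ and $\phi=c+g+h$, we obtain $\nabla g(x_0)=-\nabla_xc(x_0,y_0)$ and hence $\nabla_x\phi(x_0,y_1)=\nabla_xc(x_0,y_1)-\nabla_xc(x_0,y_0)$. Because $\delta_\phi=\delta_c$ by \eqref{eq:prop_cross_diff}, \Cref{lemma:delta-c-segments} applied to our horizontal $c$-segment and the off-segment point $y_1$ yields exactly
\[
    \delta_\phi(x,y_1;x_0,y_0)\ \geq\ -\bracket{\nabla_xc(x_0,y_1)-\nabla_xc(x_0,y_0),\dot x(0)}\ =\ -\bracket{\nabla_x\phi(x_0,y_1),\dot x(0)},
\]
proving (i).

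\textbf{Strong version.} For (ii) the only change is to replace step two with the first-order lower bound for the convex function $t\mapsto F(x(t))-\lambda\phi(x(t),y_0)$, still at $t=0$; the gradient correction $-\lambda\nabla_x\phi(x_0,y_0)$ vanishes since $\nabla_x\phi(x_0,y_0)=0$, so the gradient on the right remains $\nabla_x\phi(x_0,y_1)$. Taking the infimum over $y$ in \eqref{sfpp} and substituting the lower bound, one finds that the $\lambda$-weighted terms $\lambda\phi(x,y_0)$ and $\lambda\phi(x_0,y_0)$ combine with the $(1-\lambda)$-weighted terms to reproduce exactly the same cross-difference inequality as in (i), which we have just established. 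Thus (ii) follows from the same application of \Cref{lemma:delta-c-segments}.
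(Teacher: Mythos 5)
Your proof is correct and follows essentially the same route as the paper's: rewrite \eqref{fpp} (after an infimum over $y$) as a lower bound on $F$ involving the cross-difference, use the envelope theorem and $\nabla_x\phi(x_0,y_0)=0$ to identify the relevant gradients, obtain the first-order bound from convexity of $F$ (resp.\ $F-\lambda\phi(\cdot,y_0)$) along the $c$-segment from $x_0$ to $x$, and close with \Cref{lemma:delta-c-segments}. The algebra in the strong case, where the $\lambda$- and $(1-\lambda)$-weighted terms collapse to the same cross-difference inequality as in (i), also checks out.
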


We say that \Cref{thm:sc-fpp-am} provides a \emph{semi-local} criterion to obtain the five-point property. 
We use semi-local to emphasize that convexity of $F(x(t))$ is a local property which is practical to check by computing two time-derivatives. On the other hand the $c$-segments of interest have the nonlocal condition $\nabla_x\phi(x(0),y)=0$. 

Before we proceed with the proof of \Cref{thm:sc-fpp-am}, let us write \eqref{fpp} in a different form, using the function $F(x)=\inf_{y\in Y}\phi(x,y)$. The five-point property is 
\begin{equation} \label{eq:thm-sc-fpp-am-0}
    \phi(x,y_1) + \phi(x_0,y_0) \leq \phi(x,y) + \phi(x,y_0),
\end{equation}
for all $x\in X,y,y_0\in Y$, and with $x_0=S_\phi(y_0)$ and $y_1=T_\phi(x_0)$. 
After substracting $\phi(x_0,y_1)=F(x_0)$ on both sides of \eqref{eq:thm-sc-fpp-am-0} and taking an infimum over $y\in Y$ we obtain the following equivalent form of \eqref{eq:thm-sc-fpp-am-0},
\begin{equation} \label{eq:fpp-F}
    F(x) \geq F(x_0) +\delta_\phi(x,y_0;x_0,y_1).
\end{equation}
Similarly \eqref{sfpp} can be written as 
\begin{equation} \label{eq:sfpp-F}
    F(x) \geq F(x_0) +\delta_\phi(x,y_0;x_0,y_1) + \lambda[\phi(x,y_0)-\phi(x_0,y_0)].
\end{equation}
This formulation is directly related to \emph{cross-convexity}, introduced in Section~\ref{sec:gd:rates}.

\begin{proof}[Proof of \Cref{thm:sc-fpp-am}]
    (i): Let $x\in X,y,y_0\in Y$ and set $x_0=S_\phi(y_0)$ and $y_1=T_\phi(x_0)$. Let us establish the five-point property in the form \eqref{eq:fpp-F},
    \begin{equation} \label{eq:thm-sc-fpp-am-1}
        F(x) \geq F(x_0)  +\delta_\phi(x,y_0;x_0,y_1) = F(x_0)  -\delta_\phi(x,y_1;x_0,y_0).
    \end{equation}
    This last equality follows from a skew-symmetry property of $\delta_\phi$, see \eqref{eq:def_cross_diff} and the subsequent discussion.
    Let now $t\mapsto (x(t),y_0)$ be a $c$-segment with endpoints $x(0)=x_0$ and $x(1)=x$. This $c$-segment exists by \ref{ass:biconvex}. Since $\phi$ has nonnegative cross-curvature, we can use \Cref{lemma:delta-c-segments} and obtain that
    \begin{equation} \label{eq:thm-sc-fpp-am-2}
        -\delta_\phi(x,y_1;x_0,y_0) \leq \bracket{\nabla_x\phi(x_0,y_1) - \nabla_x\phi(x_0,y_0),\dot x(0)}.
    \end{equation}
    By the envelope theorem stated in \Cref{lemma:envelope} we have $\nabla_x\phi(x_0,y_1)=\nabla F(x_0)=\nabla F(x(0))$. Additionally $\nabla_x\phi(x_0,y_0)=0$ by definition of $x_0$. 
    The convexity of $F$ along $(x(t),y)$ then implies 
    \begin{equation} \label{eq:thm-sc-fpp-am-3}
        \bracket{\nabla F(x(0)),\dot x(0)}\leq F(x(1))-F(x(0))=F(x)-F(x_0).
    \end{equation}
    Combining~\eqref{eq:thm-sc-fpp-am-2} and \eqref{eq:thm-sc-fpp-am-3} gives us \eqref{eq:thm-sc-fpp-am-1}.

    (ii): We start from \eqref{eq:thm-sc-fpp-am-3} but replacing now $F$ by the function $x\mapsto F(x)-\lambda \phi(x,y_0)$. It gives us
    \[
        \bracket{\nabla F(x_0)-\nabla_x\phi(x_0,y_0),\dot x(0)} \leq F(x)-\lambda\phi(x,y_0)-F(x_0) + \lambda\phi(x_0,y_0).
    \]
    Combined with \eqref{eq:thm-sc-fpp-am-2} it implies  \eqref{sfpp}.
\end{proof}

We use the tools developped in this section to study several alternating minimization examples in Section~\ref{sec:examples}.

% Gradient descent with general cost

\section{Gradient descent with a general cost} \label{sec:gd}

In this section we present a new algorithm which we call \emph{gradient descent with a general cost}. Given an objective function $f$ and its gradient (or differential) $\nabla f(x_n)$ at a current iterate $x_n$, the method chooses the next iterate $x_{n+1}$ with the help of a cost function $c(x,y)$. This function $c$ can be chosen freely under the condition that $f$ be, in a sense, \emph{smooth} with respect to $c$  (specifically that $f$ be $c$-concave, as explained in Section~\ref{sec:gd:main}). We develop in Section~\ref{sec:gd:rates} a convergence theory based solely on the geometry of $c$ and properties of $f$. Gradient descent with a general cost unifies several optimization methods, each corresponding to a specific cost $c(x,y)$: vanilla gradient descent, mirror descent, Riemannian gradient descent, natural gradient descent, Newton's method, and others. Our framework provides convergence rates for all these examples and in particular we obtain the first global non-asymptotic convergence rates for natural gradient descent and Newton's method. Some examples are treated in detail in Section~\ref{sec:examples}. We also present a forward--backward (i.e.\ explicit--implicit) version of our algorithm in Section~\ref{sec:fb}.

\subsection{Main algorithm} \label{sec:gd:main}

Let $X$ and $Y$ be two open subsets of $\Rd$, or two $d$-dimensional manifolds, and let $c\colon X\times Y\to\R$ be a function. Since the quantities we will introduce are intimately tied to optimal transportation, we will refer to $c$ as the \emph{cost function}. Broadly speaking, we are interested in minimizing a function $f\colon X\to\R$ (more generally in Section~\ref{sec:fb} we consider the sum of two functions $f+g$). To start with we define the \emph{$c$-transform}, one of the fundamental objects in optimal transport~\cite[Chapter 5]{Villani_book_2009}.

\begin{definition}[$c$-transform] \label{def:c-transform}    
    The $c$-transform of a function $f\colon X\to\R$ is the function $f^c\colon Y\to\R$ defined by 
        \begin{equation} \label{eq:def-c-transform}
            f^c(y)=\sup_{x\in X} f(x)-c(x,y).
        \end{equation}
\end{definition}

One way to view the $c$-transform is that $f^c(y)$ is the lowest value $\alpha\in\R$ such that the function $x\mapsto c(x,y)+\alpha$ majorizes $f$, i.e.\ its graph lies above the graph of $f$, as in Fig.~\ref{fig:c-transform}. We therefore always have the inequality
\[
    f(x)\leq c(x,y) + f^c(y),
\]
for all $x\in X$ and $y\in Y$. We call $\phi(x,y)\coloneqq c(x,y) + f^c(y)$ the \emph{surrogate} function. Our main algorithm, \emph{gradient descent with a general cost}, then consists in alternatively minimizing $\phi(x,y)$, see \eqref{algo:step1}--\eqref{algo:step2}. It also takes a more explicit form~\eqref{algo:step1_grad}--\eqref{algo:step2_grad} that is often easier to work with.

\begin{figure}[h]
    \centering
    % base cost h(x-y) with h(x)=1/(x+1.5) + (x+1.5)^3/15
    % min at x=
    \begin{tikzpicture}
        % x-axis
        \draw (-3,-0.2) -- (3,-0.2);
        % f(x)
        \draw [thick,domain=-2.1:2.5] plot[smooth] (\x, { (\x+3)^3/46/2 }); 
        % c(x,y)+f^c(y)
        \draw [dashed,domain=-2:2] plot[smooth] (\x, { (\x)^2 / 2 + 1.9-0.55}); 
        % \draw [dashed,domain=-2:2] plot[smooth] (\x, { (\x)^2 / 2 + 2.9-0.55}); 
        
        \draw [thick,domain=-2:2] plot[smooth] (\x, { (\x)^2 / 2 + 0.9-0.55}); 
        
        \draw (3,1.7) node {$f(x)$};
        \draw (-3.7,2.1) node {$x\mapsto c(x,y)+f^c(y)$};
        \draw (-3.5,3.3) node {$x\mapsto c(x,y)+\alpha$};
        % \draw (-3.6,3.4) node {$x\mapsto c(x,y)+\alpha$};
        
    \end{tikzpicture}

\caption{The $c$-transform of $f$. For a fixed $y\in Y$, the dashed line represents a function $x\mapsto c(x,y)+\alpha$ majorizing $f$. The smallest of such functions is $x\mapsto c(x,y)+f^c(y)$, here represented in solid line.  }
\label{fig:c-transform}
\end{figure}
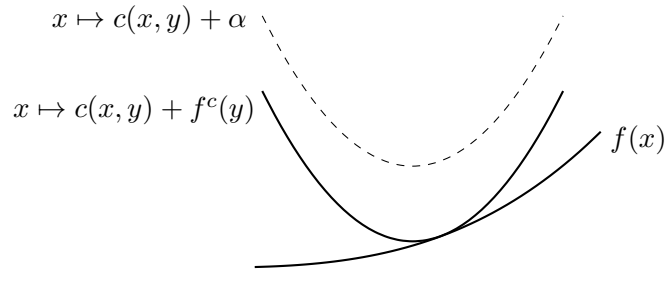

\begin{algorithm}[Gradient descent with a general cost] \label{algo}
    Initialize $x_0\in X$ and alternate the following two steps,
\begin{align}
    y_{n+1} &= \argmin_{y\in Y} c(x_n,y) + f^c(y), \label{algo:step1}\\
    x_{n+1} &= \argmin_{x\in X} c(x,y_{n+1}) + f^c(y_{n+1}). \label{algo:step2}
\end{align}
\end{algorithm}

Let us explain the main ideas behind the two steps~\eqref{algo:step1} and~\eqref{algo:step2}. 
The majorizing functions $x\mapsto c(x,y)+f^c(y)$ form a family indexed by $y\in Y$. The $y_{n+1}$ update \eqref{algo:step1} selects the one element in that family  that is tangent to $f$ at $x_n$. See Fig.~\ref{fig:gd}. Such an element is guaranteed to exist essentially when $f$ is $c$-concave, see Definition~\ref{def:c-concave} below. The update \eqref{algo:step2} is easier to understand and simply finds the next iterate $x_{n+1}$ by minimizing the surrogate function $x\mapsto c(x,y_{n+1})+f^c(y_{n+1})$ found at the previous step. Algorithm~\ref{algo} therefore fits into the ``majorize--minimize'' principle~\cite{lange2016mm}.

\begin{figure}[h]
    \centering
    % base cost h(x-y) with h(x)=1/(x+1.5) + (x+1.5)^3/15
    % min at x=
    \begin{tikzpicture}
        \draw (-3,-0.2) -- (3,-0.2);
        \draw [thick,domain=-2.1:2.5] plot[smooth] (\x, { (\x+3)^3/46 }); 
        \draw [thick,domain=-1.18:2.1] plot[smooth] (\x, { 1/(\x+1.5) + (\x+1.5)^3/15 - 0.03}); 
        \draw [dashed,domain=-0.57:2.5] plot[smooth] (\x, { 1/(\x+1.0) + (\x+1.0)^3/15 + 0.59}); 
        % \draw [dashed,domain=-2.20:1.2] plot[smooth] (\x, { 1/(\x+2.5) + (\x+2.5)^3/15 - 0.68}); 
        \draw [dashed,domain=-2.1:1.2] plot[smooth] (\x, { 1/(\x+2.5) + (\x+2.5)^3/15 - 0.68}); 
        % \fill (0.,0) circle [radius=2pt] node (zero) {};
        % \fill (1,0) circle [radius=2pt] node (one) {};
        %%% x_n %%% 
        \fill (1.0,-0.2) circle [radius=2pt] node[below] (xn) {$x_n$};
        \fill (1.0,1.41167) circle [radius=2pt] node (zn) {};
        \fill (1.0,2.49) circle [radius=2pt] node (ztn) {};
        \fill (1.0,1.65) circle [radius=2pt] node (zttn) {};
        \draw [dashed] (xn) -- (ztn) ;        

        %%% x_{n+1} %%%
        \fill (0,-0.2) circle [radius=2pt] node[below] (xnpo) {$x_{n+1}$};
        \fill (0,0.86167) circle [radius=2pt] node (znpo) {};
        \draw [dashed] (xnpo) -- (znpo) ;        

        \draw (3,3) node {$f(x)$};
        % \draw (-3.8,1.5) node {$x\mapsto c(x,y)+f^c(y)$};
        % \draw (-1.7,3.5) node {$x\mapsto c(x,y_{n+1})+f^c(y_{n+1})$};
        \draw (-3.6,2.6) node {$x\mapsto c(x,y_{n+1})+f^c(y_{n+1})$};
    \end{tikzpicture}

\caption{Iterates of Algorithm~\ref{algo}. The dashed functions represent some surrogates $x\mapsto c(x,y)+f^c(y)$ for various values of $y$. The solid line surrogate is the one for which the value at $x_n$ is minimized, i.e.\ $y=y_{n+1}$.}
\label{fig:gd}
\end{figure}
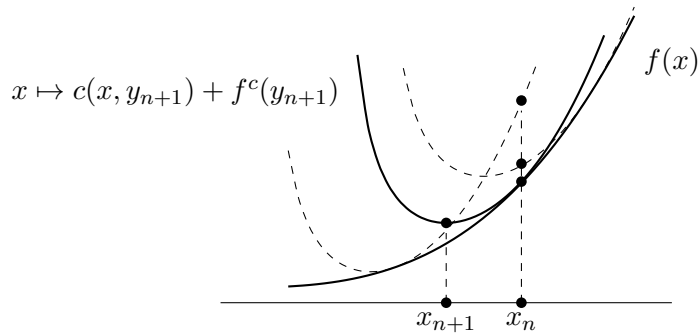

The form \eqref{algo:step1}--\eqref{algo:step2} of our algorithm presents multiple advantages. Firstly it formalizes in a concise manner the idea of \emph{minimizing a surrogate function} often found in optimization. Secondly it takes the form of an alternating minimization. This confers stability to the method and we establish in Section~\ref{sec:am} new convergence rates for alternating minimization that are especially well adapted to Algorithm~\ref{algo}, see Section~\ref{sec:gd:rates}. 
Lastly, \eqref{algo:step1}--\eqref{algo:step2} is a robust formulation in the sense that it doesn't require differentiability or even continuity of $f$ and $c$. In the same vein $X$ and $Y$ could a priori be infinite-dimensional or even general sets, as in Section~\ref{sec:am-rates}. This is thanks to the strong mathematical foundations of optimal transportation, where the $c$-transform and other objects can be used in general nonsmooth contexts. We do not explore these avenues here and choose for simplicity to remain in the differentiable realm.  

On the other hand, a drawback of~\eqref{algo:step1}--\eqref{algo:step2} is that the $c$-transform term $f^c$ is itself obtained through an optimization problem~\eqref{eq:def-c-transform}.
When implementing the algorithm we may not want to or be able to compute $f^c$.
In that case we use a more explicit form:

\begin{manualalg}{2'}[A more explicit version of Algorithm~\ref{algo}] \label{algo:gd-explicit}
    Initialize $x_0\in X$ and alternate the following two steps,
    \begin{align}
        -&\nabla_x c(x_n, y_{n+1})=-\nabla f(x_n), \label{algo:step1_grad}\\
        &\nabla_x c(x_{n+1}, y_{n+1}) =0. \label{algo:step2_grad}
    \end{align}    
\end{manualalg}

We show in \Cref{prop:alg_grad} below that under a certain \emph{smoothness} condition on $f$ known as $c$-concavity, steps \eqref{algo:step1}--\eqref{algo:step2} can be written as \eqref{algo:step1_grad}--\eqref{algo:step2_grad}, which have no $c$-transform term. But before that let us briefly comment on the object $-\nabla_xc$ that shows up in \eqref{algo:step1_grad} and \eqref{algo:step2_grad}. When inverted, it defines the \emph{c-exponential map}, see Definition~\ref{def:c-exp} in the previous section. 
Therefore \eqref{algo:step1_grad} can be written as  
\begin{equation*}
    y_{n+1}=\cexp_{x_n}(-\nabla f(x_n)).
\end{equation*}
In general $X$ and $Y$ are totally distinct spaces. Thus after constructing $y_{n+1}\in Y$ we need a way to map back into $X$ and obtain a new iterate $x_{n+1}$. This is the role of \eqref{algo:step2_grad} which acts as a bridge between $X$ and $Y$.

Let us now define $c$-concavity, a central notion in optimal transport which in the context of this paper can be seen as a smoothness property (here \emph{smoothness} is used in the sense of an upper bound on the Hessian, as in the optimization literature).

\begin{definition}[$c$-concavity] \label{def:c-concave}
    We say that a function $f\colon X\to\R$ is $c$-concave if there exists a function $h\colon Y\to \R$ such that
    \begin{equation}\label{eq:def_c_concav}
        f(x)=\inf_{y\in Y}c(x,y)+h(y),
    \end{equation}
    for all $x\in X$.    
\end{definition}

An elementary but essential observation is that we may replace $h$ by $f^c$ in \eqref{eq:def_c_concav}~\cite[Chapter 5]{Villani_book_2009} (in fact $f^c$ is the smallest of such $h$'s). In terms of our surrogate function $\phi(x,y)=c(x,y)+f^c(y)$, this says that $f$ is $c$-concave if and only if it can be recovered from $\phi$ as 
\[
    f(x)=\inf_{y\in Y} c(x,y)+f^c(y)=\inf_{y\in Y} \phi(x,y).
\]

To state a precise result relating Algorithm~\ref{algo} to Algorithm~\ref{algo:gd-explicit} we introduce the following assumption:

\begin{enumerate}[resume*=ass]    
    \item $c$ and $f$ satisfy: \label{ass:fc-min-min}
    \begin{enumerate}[\normalfont\textbf{(A\arabic{enumi}\alph*)}]
        \item for each $y\in Y$, the function $x\mapsto c(x,y)$ attains its minimum at a unique point in $X$; \label{ass:c-unique-min}
        \item for each $x\in X$, the function $y\mapsto c(x,y)+f^c(y)$ attains its minimum at a unique point in $Y$. \label{ass:c-fc-unique-min}
    \end{enumerate}
\end{enumerate}

Note that \ref{ass:fc-min-min} is a version of assumption~\ref{ass:phi-min-min} introduced for alternating minimization problems. It ensures that Algorithm~\ref{algo} is well-defined.

\begin{prop} \label{prop:alg_grad}
    Suppose that $f$ is $c$-concave, that $f$ and $c$ are differentiable and that \ref{ass:fc-min-min} holds. Then \eqref{algo:step1} can be written as \eqref{algo:step1_grad} and \eqref{algo:step2} can be written as \eqref{algo:step2_grad}. Therefore Algorithms~\ref{algo} and \ref{algo:gd-explicit} are the same.
\end{prop}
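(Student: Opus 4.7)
The plan is to translate each minimization step of Algorithm~\ref{algo} into the gradient equation of Algorithm~\ref{algo:gd-explicit}, exploiting that differentiability of $c$ and $f$ turns the optimality conditions into equalities of gradients. The argument for \eqref{algo:step2} is essentially a first-order optimality condition, while the argument for \eqref{algo:step1} is the key point and uses the tangency of the surrogate to $f$ guaranteed by $c$-concavity.

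First I would handle \eqref{algo:step2}, which is the easier of the two. Observe that $f^c(y_{n+1})$ is a constant with respect to $x$, so \eqref{algo:step2} is equivalent to $x_{n+1}=\argmin_{x\in X}c(x,y_{n+1})$. By assumption \ref{ass:c-unique-min} this minimizer exists and is unique, and since $c$ is differentiable in $x$, the interior minimum satisfies the first-order optimality condition $\nabla_xc(x_{n+1},y_{n+1})=0$, which is exactly \eqref{algo:step2_grad}. Conversely, under \ref{ass:c-unique-min}, the minimizer is characterized among the iterates of the scheme by this equation (taken as the update rule that selects that unique point).

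The main step is to derive \eqref{algo:step1_grad} from \eqref{algo:step1} using $c$-concavity. By definition of $y_{n+1}$ in \eqref{algo:step1}, together with assumption \ref{ass:c-fc-unique-min}, we have
\[
    c(x_n,y_{n+1})+f^c(y_{n+1})=\inf_{y\in Y}\big[c(x_n,y)+f^c(y)\big].
\]
Because $f$ is $c$-concave, the remark following Definition~\ref{def:c-concave} gives $f(x)=\inf_{y\in Y}c(x,y)+f^c(y)$ for every $x\in X$. Applying this at $x=x_n$ shows that $f(x_n)=c(x_n,y_{n+1})+f^c(y_{n+1})$, and applying it at a general $x$ gives $f(x)\le c(x,y_{n+1})+f^c(y_{n+1})$. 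Thus the surrogate $x\mapsto\phi(x,y_{n+1})$ is tangent to $f$ from above at $x_n$. In particular the function $x\mapsto\phi(x,y_{n+1})-f(x)$ is nonnegative on $X$ and vanishes at the interior point $x_n$, so $x_n$ is a minimizer of this difference; since both $\phi(\cdot,y_{n+1})$ and $f$ are differentiable at $x_n$, the first-order condition yields
\[
    \nabla_xc(x_n,y_{n+1})=\nabla_x\phi(x_n,y_{n+1})=\nabla f(x_n),
\]
which is exactly \eqref{algo:step1_grad}. The converse direction (that the $y$ satisfying \eqref{algo:step1_grad} is the minimizer of \eqref{algo:step1}) then follows from uniqueness in \ref{ass:c-fc-unique-min} and the fact that \eqref{algo:step1} admits a solution, so the two characterizations must coincide.

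The only subtle point is the tangency argument: one might worry that the infimum in the $c$-concavity formula is not attained, but assumption \ref{ass:c-fc-unique-min} rules this out at the specific point $x_n$ by guaranteeing that $y_{n+1}$ achieves it. Apart from this observation, the proof is a routine combination of the envelope/tangency inequality with elementary calculus, so I expect no real obstacle beyond bookkeeping.
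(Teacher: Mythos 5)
Your proof is correct and follows essentially the same route as the paper: the paper invokes the envelope theorem (Lemma~\ref{lemma:envelope}) to get $\nabla f(x_n)=\nabla_x\phi(x_n,y_{n+1})$, and your tangency argument is precisely the proof of that lemma written out inline.
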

\begin{proof}
    Since $f$ is $c$-concave we have $f(x)=\inf_{y\in Y}\phi(x,y)$ with $\phi(x,y)=c(x,y)+f^c(y)$. By the envelope theorem in Lemma~\ref{lemma:envelope} together with \eqref{algo:step1} we obtain $\nabla f(x_n)=\nabla_x\phi(x_n,y_{n+1})=\nabla_xc(x_n,y_{n+1})$, which gives us \eqref{algo:step1_grad}. The $x$-update of the algorithm is more immediate and \eqref{algo:step2} directly gives \eqref{algo:step2_grad}.
\end{proof}

\begin{remark}[Invariance under reparametrization in $y$]\label{rem:reparam}
    Algorithm~\ref{algo} is invariant under reparametrization in $y$, in the following sense. Let $\tilde Y$ be a set and $S\colon Y\to\tilde Y$ a bijection. Then doing the change of variables $\yt=S(y)$ in the cost $c(x,y)$ will adjust the objects $f^c$ and $\phi=c+f^c$ correspondingly but will not change the quantities defined on $X$. Indeed defining $\tilde c(x,\yt)=c(x,S^{-1}(\yt))$ and $\tilde\phi(x,\yt)=\phi(x,S^{-1}(\yt))$ we have 
    \[
        f^{\tilde c}(\yt)=\sup_{x\in X}f(x)-\tilde c(x,\yt)=\sup_{x\in X}f(x)-c(x,y)=f^c(y),
    \] 
    when $\yt=S(y)$. This implies $\tilde\phi(x,\yt)=\phi(x,y)$, and in particular $\inf_{y\in Y}\phi(x,y)=\inf_{\yt\in\tilde Y}\tilde\phi(x,\tilde y)$. In Algorithm~\ref{algo} the $y$-step will therefore produce different iterates $\yt_n=S(y_n)$ but the $x_n$ iterates will be the same. This gives the user great flexibility for choosing the ``dual space'' $Y$ or, in other words, the expression of the cost. See for instance the two formulations of mirror descent in Examples~\ref{ex:mirror-descent} and \ref{ex:mirror-descent_dual}.
\end{remark}

\begin{remark}[Invariance when adding to the cost a function of $y$]
    When the cost $c$ is changed into $\tilde c(x,y)=c(x,y)+a(y)$, we have that $f^{\tilde c}(y)=f^c(y)-a(y)$. Therefore $\tilde c(x,y)+f^{\tilde c}(y)=c(x,y)+f^c(y)$ and the surrogate $\phi(x,y)$ is left unchanged by this operation.     
\end{remark}

We now look at various examples of Algorithm~\ref{algo:gd-explicit} under different choices of cost functions. These examples show that a substantial number of popular gradient descent-like optimization methods can be unified by our framework. Note that \eqref{algo:step1}--\eqref{algo:step2} also says that all these methods have an alternating minimization formulation.

\begin{example}[Gradient descent] \label{ex:gd}
    Let $(X,\norm{\cdot})$ be a Euclidean space, take $Y=X$ and $c(x,y)=\frac{L}{2}\norm{x-y}^2$ for some $L>0$. We have that $-\nabla_xc(x,y)=L(y-x)$, therefore the $y$-update \eqref{algo:step1_grad} can be written as $y_{n+1}-x_n=-\frac{1}{L}\nabla f(x_n)$. As for the $x$-update \eqref{algo:step2_grad} it simply says that $x_{n+1}=y_{n+1}$. Therefore Algorithm~\ref{algo:gd-explicit} takes the form
    \[
        x_{n+1}-x_n=-\frac{1}{L}\nabla f(x_n).
    \]
    This is the classical gradient descent of $f$ with fixed step size $1/L$. This example is developed in Section~\ref{sec:examples:mirror-descent}, where we also show that $c$-concavity corresponds to $L$-smoothness.
\end{example}

\begin{example}[Mirror descent] \label{ex:mirror-descent}
    Let $X$ be a finite-dimensional vector space and let $u\colon X\to\R$ be a strictly convex function. Take $Y=X$ and $c(x,y)=u(x|y)$, where $u(x|y)\coloneqq u(x)-u(y)-\bracket{\nabla u(y),x-y}$ is the \emph{Bregman divergence} of $u$. Then the steps \eqref{algo:step1_grad}--\eqref{algo:step2_grad} can be combined into
    \begin{equation*}
        \nabla u(x_{n+1})-\nabla u(x_n)=-\nabla f(x_n).
    \end{equation*}
    Thus Algorithm~\ref{algo:gd-explicit} is the mirror descent of $f$ with mirror function $u$. This example is treated in more detail in Section~\ref{sec:examples:mirror-descent} with some background on Bregman divergences. We show in particular that $c$-concavity corresponds to relative smoothness.
\end{example}

\begin{example}[Another view on mirror descent]\label{ex:mirror-descent_dual}
    Let $X$ and $u$ be as in Example~\ref{ex:mirror-descent}. Then take $Y=X^*$ to be the dual vector space of $X$ and consider the cost $c(x,y)=u(x)+u^*(y)-\bracket{x,y}$, where $u^*$ is the convex conjugate of $u$. This ``Fenchel--Young gap'' cost is essentially the Bregman divergence of $u$ up to a reparametrization in $y$, and see Remark~\ref{rem:reparam}. To be more precise, we have that $c(x,y)=u(x|\yt)$ if $y=\nabla u(\yt)$. Therefore we expect this cost to give us the mirror descent iteration, which is indeed the case as we now verify. We have $\nabla_xc(x,y)=\nabla u(x)-y$ and therefore Algorithm~\ref{algo:gd-explicit} takes the form 
    \begin{align*}
        y_{n+1}-\nabla u(x_n) &= -\nabla f(x_n),\\
        \nabla u(x_{n+1}) &= y_{n+1}.
    \end{align*}
    Eliminating the $y$ variable we recover mirror descent. A potential advantage of this perspective is that $c(x,y)=u(x)+u^*(y)-\bracket{x,y}$ can be defined for a general class of lower semicontinuous convex functions $u$, even in infinite dimensions, and without worrying about differentiability. Then \eqref{algo:step1}--\eqref{algo:step2} may provide a way to define mirror descent in an infinite-dimensional or nondifferentiable context.
\end{example}

\begin{example}[Natural gradient descent]
    Let $X$ be a finite-dimensional vector space and let $u\colon X\to\R$ be a convex function with a positive-definite Hessian. Take $Y=X$ and $c(x,y)=u(y|x)$ (note the reversed order of $x$ and $y$ compared to \Cref{ex:mirror-descent}). Then $-\nabla_xc(x,y)=\nabla^2u(x)(y-x)$ and Algorithm~\ref{algo:gd-explicit} takes the form
    \begin{equation*}
        x_{n+1}-x_n=-(\nabla^2u(x_n))^{-1}\nabla f(x_n).
    \end{equation*}
    This is the \emph{natural gradient descent} iteration \cite{Amari1998,Martens2020}, with Hessian metric $\nabla^2u$. We treat this example in detail in Section~\ref{sec:examples:natural-gd}. In particular when $u=f$ this is Newton's method, which we study in Section~\ref{sec:newton}.
\end{example}

\begin{example}[A nonlinear gradient descent]
    Let $X=Y$ be a finite-dimensional vector space and consider a translation-invariant cost $c(x,y)=\ell(x-y)$, with $\ell(\cdot)$ nonnegative, strictly convex and differentiable and such that  $\ell(0)=0$. Let $\ell^*$ be its convex conjugate. Then Algorithm~\ref{algo:gd-explicit} takes the form
    \[
        x_{n+1}-x_n = -\nabla \ell^*(\nabla f(x_n)).
    \]
    This is a type of gradient descent with a nonlinear preconditioner $\nabla\ell^*$. It was studied in~\cite{MR4236857,laude2022anisotropic}.
\end{example}

\begin{example}[Riemannian gradient descent]
    Let $(M,\mathsf g)$ be a Riemannian manifold. Take $X=Y=M$ with the cost $c(x,y)=\frac L2d^2(x,y)$, where $L>0$ and $d$ is the Riemannian distance. Then Algorithm~\ref{algo:gd-explicit} takes the form
    \[
        x_{n+1} = \exp_{x_n}(-\frac{1}{L}\nabla f(x_n)),
    \]
    where $\exp$ stands for the Riemannian exponential map (see the discussion following Definition~\ref{def:c-exp}). This is the Riemannian gradient descent iteration, where the next iterate $x_{n+1}$ is obtained by shooting a geodesic at $x_n$ with initial velocity $-\frac{1}{L}\nabla f(x_n)$. We treat this example in more detail in Section~\ref{sec:examples:riemannian}.
\end{example}

\begin{example}[Log-divergence cost] \label{ex:log-div}
    Let $X=Y$ be a convex subset of $\Rd$, fix $\alpha>0$ and let $u\colon X\to\R$. Consider the cost
    \[
        c(x,y)=u(x)-u(y)+\frac{1}{\alpha}\log(1-\alpha\bk{\nabla u(y),x-y}).
    \]
    This function was introduced and studied under the name ``log-divergence'' in \cite{PalWong2016,PalWong2018}. When $x\mapsto e^{-\alpha u(x)}$ is concave over $X$, the cost $c(x,y)$ is well-defined and $c(x,y)\geq 0$. Note that when $\alpha\to 0$ we recover the Bregman divergence $u(x|y)$. With this cost Algorithm~\ref{algo:gd-explicit} takes the form 
    \begin{equation}    \label{eq:algo:log-div}
        \nabla u(x_{n+1}) = \mu_{n+1} \big(\nabla u(x_n) - \nabla f(x_n)\big),
    \end{equation}
    where $\mu_{n+1}\in \R$ is a solution to the scalar equation 
    \begin{equation}    \label{eq:scalar-eq:mu}
        \alpha\bracket{(\nabla u)^{-1}\big(\mu\nabla h(x_n)\big)-x_n,\nabla h(x_n)}\mu-\mu+1=0,
    \end{equation}
    assuming $u$ has invertible gradient.
    Here we write $h= u-f$, and $(\nabla u)^{-1}$ denotes the inverse of the gradient of $u$ as a mapping $X\to\Rd$. 
    
    In the particular case $u(x)=\frac L2\norm{x}^2$, \eqref{eq:scalar-eq:mu} becomes a quadratic equation that can be solved in closed form, and \eqref{eq:algo:log-div} reads $x_{n+1}=\mu_{n+1}(x_n-\frac1L\nabla f(x_n))$. Thus we obtain a scaling of gradient descent which is reminiscent of accelerated gradient methods (Polyak's heavy ball and Nesterov's method).

    Finally we note that \cite{KainthWongRudzicz2022} introduces and studies a ``conformal mirror flow'', which can be seen as a continuous-time version of \eqref{eq:algo:log-div}. They show that up to a reparametrization of time it is equivalent to a mirror flow, i.e.\ a gradient flow with respect to a Hessian metric. This doesn't seem to be the case at the discrete level \eqref{eq:algo:log-div}.   

\end{example}

%%%%%%%%%%%%%%%%%%%%%%%%%%%%%%%%%%%%%%%
%%%%%%%%%  CONVERGENCE RATES  %%%%%%%%%
%%%%%%%%%%%%%%%%%%%%%%%%%%%%%%%%%%%%%%%

\subsection{Convergence rates} \label{sec:gd:rates}

In this section we present non-asymptotic sublinear and linear convergence rates for Algorithm~\ref{algo}. Since Algorithm~\ref{algo} is written as an alternating minimization we can apply the theory developed in Section~\ref{sec:am}. We reformulate the five-point property on the function $\phi(x,y)=c(x,y)+f^c(y)$ in terms of $f$ and $c$. More precisely, we show that it corresponds to two properties of $f$, a smoothness property ($c$-concavity) and a new convexity-type property which we introduce under the name cross-convexity (Definition \ref{def:sc}). Note that this falls in line with the classical theory for gradient descent where sublinear rates require both smoothness and convexity.

To begin with, we recall the \emph{cross-difference} of $c$ defined by \eqref{eq:def_cross_diff} in Section~\ref{sec:background:mtw},
\begin{equation*}
    \delta_c(x',y';x,y) \coloneqq c(x,y')+c(x',y)-c(x,y)-c(x',y').
\end{equation*}

\begin{definition}[cross-convexity] \label{def:sc}
    Suppose that $f$ and $c$ are differentiable. We say that $f$ is $c$-cross-convex if 
    for all $x,\xb\in X$ and any $\yb,\yh\in Y$ verifying $\nabla_xc(\xb,\yb)=0$ and $-\nabla_xc(\xb,\yh)=-\nabla f(\xb)$ we have 
    \begin{equation}\label{sc}
        f(x) \geq f(\xb) + \delta_c(x,\yb;\xb,\yh).
    \end{equation}     
    In addition let $\lambda>0$. We say that $f$ is $\lambda$-strongly $c$-cross-convex if under the same conditions as above we have 
    \begin{equation}\label{lsc}
        f(x) \geq f(\xb) + \delta_c(x,\yb;\xb,\yh) + \lambda(c(x,\yb)-c(\xb,\yb)).
    \end{equation}    
\end{definition}

The next result says that when combined with $c$-concavity, cross-convexity gives the five-point property for the surrogate. This is not surprising given the resemblance of \eqref{sc} with the form \eqref{eq:fpp-F}
of the five-point property.

\begin{prop}\label{prop:conc-conv-fpp}
    Suppose that $f$ and $c$ are differentiable, that \ref{ass:fc-min-min} holds and let $\phi(x,y)=c(x,y)+f^c(y)$. If $f$ is $c$-concave and $c$-cross-convex then $\phi$ satisfies the five-point property \eqref{fpp}.

    Moreover let $\lambda>0$. If $f$ is $c$-concave and $\lambda$-strongly $c$-cross-convex then $\phi$ satisfies the $\lambda$-strong five-point property \eqref{sfpp}. 
\end{prop}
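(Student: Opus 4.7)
The plan is to translate the five-point property into the equivalent form \eqref{eq:fpp-F} already derived in the paper and then match the hypotheses of $c$-cross-convexity term-by-term. The key observation is that under $c$-concavity, the function $F(x) = \inf_{y\in Y}\phi(x,y)$ coincides with $f(x)$, so \eqref{eq:fpp-F} becomes
\[
    f(x) \geq f(x_0) + \delta_\phi(x,y_0;x_0,y_1),
\]
for $x_0 = S_\phi(y_0)$ and $y_1 = T_\phi(x_0)$. Since $\phi(x,y) = c(x,y) + f^c(y)$ differs from $c$ only by a function of $y$ alone, the invariance \eqref{eq:prop_cross_diff} gives $\delta_\phi = \delta_c$, so the right-hand side is identical to the right-hand side of the cross-convexity inequality \eqref{sc}.

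The remaining task is to verify that the pair $(y_0, y_1)$ satisfies the two gradient conditions appearing in Definition~\ref{def:sc}, with the identifications $\xb = x_0$, $\yb = y_0$, $\yh = y_1$. First, $x_0 = \argmin_x \phi(x, y_0)$ combined with differentiability immediately yields $\nabla_x c(x_0, y_0) = 0$, which is the condition on $\yb$. Second, for the condition on $\yh$, I apply the argument already used in the proof of \Cref{prop:alg_grad}: $y_1 = \argmin_y \phi(x_0, y)$ together with $c$-concavity and the envelope lemma gives $\nabla f(x_0) = \nabla_x \phi(x_0, y_1) = \nabla_x c(x_0, y_1)$, exactly the condition $-\nabla_x c(\xb,\yh) = -\nabla f(\xb)$. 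Then $c$-cross-convexity directly delivers \eqref{eq:fpp-F}, which is equivalent to \eqref{fpp}.

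For the strong version, I repeat the same matching of points, apply $\lambda$-strong cross-convexity \eqref{lsc} instead of \eqref{sc}, and note that the extra term $\lambda(c(x,y_0) - c(x_0,y_0))$ coincides with $\lambda(\phi(x,y_0) - \phi(x_0,y_0))$ since the $f^c(y_0)$ contributions cancel. This produces exactly \eqref{eq:sfpp-F}, which is the reformulation of \eqref{sfpp} obtained in the discussion preceding the proof of \Cref{thm:sc-fpp-am}.

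There is essentially no serious obstacle here: the proposition is a dictionary-style translation, and the only mild care needed is to correctly identify which point plays the role of $\yb$ versus $\yh$ in Definition~\ref{def:sc} and to invoke the envelope theorem (\Cref{lemma:envelope}) through $c$-concavity to recover the gradient condition involving $\nabla f$. No new computation or estimate is required beyond what has already been set up.
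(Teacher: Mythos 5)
Your proposal is correct and follows essentially the same route as the paper's proof: rewrite \eqref{fpp}/\eqref{sfpp} in the form \eqref{eq:fpp-F}/\eqref{eq:sfpp-F}, identify $F=f$ via $c$-concavity, verify the two gradient conditions (first-order optimality for $\yb=y_0$, envelope theorem for $\yh=y_1$), and invoke (strong) cross-convexity together with $\delta_\phi=\delta_c$. No gaps.
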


\begin{proof}
    The cases $\lambda=0$ and $\lambda>0$ can be treated at the same time and we therefore take $\lambda\geq 0$. Let $F(x)=\inf_{y\in Y}\phi(x,y)$. Since $f$ is $c$-concave we have that $F(x)=f(x)$. Let us prove \eqref{sfpp} in the form~\eqref{eq:sfpp-F}: for all $x\in X$ and $y_0\in Y$,
    \begin{equation}\label{eq:proof-prop-cc-fp-1}
        F(x) \geq F(x_0) +\delta_\phi(x,y_0;x_0,y_1) + \lambda[\phi(x,y_0)-\phi(x_0,y_0)],
    \end{equation}
    with $x_0=\argmin_{x\in X}c(x,y_0)$ and $y_1=\argmin_{y\in Y} c(x_0,y)+f^c(y)$. 

    By definition of $x_0$ we have $\nabla_xc(x_0,y_0)=0$. By the envelope theorem in \Cref{lemma:envelope} we have that $-\nabla_xc(x_0,y_1)=-\nabla f(x_0)$. We can therefore apply the definition of cross-convexity with $\xb=x_0$, $\yb=y_0$ and $\yh=y_1$. Then \eqref{eq:proof-prop-cc-fp-1} directly follows from \eqref{lsc}, after using that $F=f$, that $\delta_\phi=\delta_c$ by \eqref{eq:prop_cross_diff} and checking that $c(x,y_0)-c(x_0,y_0)=\phi(x,y_0)-\phi(x_0,y_0)$.    
\end{proof}

We are now ready to state our convergence rates for gradient descent with a general cost. 

\begin{theorem}[Convergence rates for Algorithm~\ref{algo}] \label{thm:cv_rates}
    Suppose that $f$ and $c$ are differentiable and satisfy \ref{ass:fc-min-min}. Suppose that $f$ is bounded below and that it attains its infimum $f_*$ at a point $x_*$. Consider the iterates $x_n,y_n$ defined by Algorithm~\ref{algo}. Then the following statements hold.    
    \begin{enumerate}[(i)]
        \item Suppose that $f$ is $c$-concave. Then we have the descent property
    \[
        f(x_{n+1})\le f(x_n)-[c(x_n,y_{n+1})-c(x_{n+1},y_{n+1})] \le f(x_n),
    \]    
   which implies the stopping criterion 
   \[
    \min_{0\le k\le n-1} [c(x_k,y_{k+1})-c(x_{k+1},y_{k+1})]\le \frac{f(x_0)-f_*}{n}.
   \]
   
    \item Suppose in addition that $f$ is $c$-cross-convex. Then 
    for any $x\in X, n\geq 1$,
    \begin{equation}\label{eq:descent_sublinear}
         f(x_n)\le f(x) + \frac{c(x,y_0)-c(x_0,y_0)}{n}.
    \end{equation}
    In particular $f(x_n)-f_* =O(1/n)$. 
    
    \item Suppose in addition that $f$ is $\lambda$-strongly $c$-cross-convex for some $\lambda\in(0,1)$. Then for any $x\in X, n\geq 1$,
    \begin{equation}\label{eq:descent_linear}
         f(x_n)\le f(x) + \frac{\lambda \,(c(x,y_0)-c(x_0,y_0))}{\Lambda^n-1},
    \end{equation}
    where $\Lambda\coloneqq(1-\lambda)^{-1}>1$. In particular $f(x_n)-f_* = O((1-\lambda)^n)$.
    \end{enumerate}
\end{theorem}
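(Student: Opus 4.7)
The plan is to reduce everything to the alternating minimization theorem (\Cref{thm:am-rates}) applied to the surrogate $\phi(x,y) = c(x,y) + f^c(y)$. By construction, Algorithm~\ref{algo} is alternating minimization on $\phi$, and assumption \ref{ass:fc-min-min} is exactly the instance of \ref{ass:phi-min-min} needed for $\phi$. All three statements will then follow from the three parts of \Cref{thm:am-rates} together with two translations between $\phi$-language and $(f,c)$-language: first, under $c$-concavity of $f$ we have $f(x) = \inf_{y\in Y} \phi(x,y)$, so in particular $f(x_n) = \phi(x_n, y_{n+1})$ and $f(x) \le \phi(x,y)$ for every $y$; second, because $f^c(y_0)$ does not depend on the first slot, $\phi(x,y_0) - \phi(x_0,y_0) = c(x,y_0) - c(x_0,y_0)$.

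For (i), I would invoke \Cref{thm:am-rates}(i) to get $\phi(x_{n+1},y_{n+1}) \le \phi(x_n,y_{n+1})$, which after cancelling the common term $f^c(y_{n+1})$ becomes $c(x_{n+1},y_{n+1}) \le c(x_n,y_{n+1})$. Using $c$-concavity, $f(x_n) = \phi(x_n,y_{n+1})$ exactly, while $f(x_{n+1}) \le \phi(x_{n+1},y_{n+1})$ by \eqref{eq:def_c_concav}; subtracting yields the claimed descent $f(x_{n+1}) \le f(x_n) - [c(x_n,y_{n+1}) - c(x_{n+1},y_{n+1})]$. Each bracketed quantity is nonnegative, so summing from $k=0$ to $n-1$ gives $\sum_{k=0}^{n-1} [c(x_k,y_{k+1}) - c(x_{k+1},y_{k+1})] \le f(x_0) - f(x_n) \le f(x_0) - f_*$, and the minimum is bounded by the average $(f(x_0)-f_*)/n$.

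For (ii), \Cref{prop:conc-conv-fpp} converts $c$-concavity plus $c$-cross-convexity of $f$ into the five-point property~\eqref{fpp} for $\phi$, so \Cref{thm:am-rates}(ii) applies and gives $\phi(x_n,y_n) \le \phi(x,y) + [\phi(x,y_0) - \phi(x_0,y_0)]/n$ for all $x\in X, y\in Y$. On the left, $c$-concavity (via the descent in (i)) gives $f(x_n) \le \phi(x_n,y_n)$; on the right, taking the infimum over $y$ yields $\inf_y \phi(x,y) = f(x)$; and the bracketed term simplifies to $c(x,y_0) - c(x_0,y_0)$. Part (iii) follows by the same pattern: \Cref{prop:conc-conv-fpp} upgrades $\lambda$-strong cross-convexity into \eqref{sfpp} for $\phi$, and \Cref{thm:am-rates}(iii) produces the linear rate, to which the same three translations are applied.

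There is no real obstacle here since all the hard work is done by \Cref{thm:am-rates} and \Cref{prop:conc-conv-fpp}; the only thing to be careful about is that we must read off $f(x_n)$ from $\phi$ using the $c$-concavity identity $f(x_n) = \phi(x_n, y_{n+1})$ (not $\phi(x_n, y_n)$, which only equals $f$ in the limit) and verify that taking an infimum over $y$ on the right-hand side commutes with the $n$-dependent term because the latter does not depend on $y$.
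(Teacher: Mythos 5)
Your proposal is correct and follows essentially the same route as the paper: reduce to \Cref{thm:am-rates} via the surrogate $\phi(x,y)=c(x,y)+f^c(y)$, use \Cref{prop:conc-conv-fpp} to convert (strong) cross-convexity plus $c$-concavity into the (strong) five-point property, and translate back using $f(x_n)=\phi(x_n,y_{n+1})$ and $\phi(x,y_0)-\phi(x_0,y_0)=c(x,y_0)-c(x_0,y_0)$. The details you flag as requiring care (reading $f(x_n)$ off $\phi(x_n,y_{n+1})$ rather than $\phi(x_n,y_n)$, and taking the infimum over $y$ only in the $y$-dependent term) are exactly the steps the paper's proof carries out.
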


In some sense \Cref{thm:cv_rates} is just a corollary of the convergence rates for alternating minimization established in \Cref{thm:am-rates}.
\begin{proof}
    (i): Let $\phi(x,y)=c(x,y)+f^c(y)$. Since $f$ is $c$-concave we have that $f(x)=\inf_{y\in Y}\phi(x,y)$. By \Cref{thm:am-rates}(i) we have 
    \begin{equation}
        \phi(x_{n+1},y_{n+2})\leq \phi(x_{n+1},y_{n+1})\leq \phi(x_n,y_{n+1}).
    \end{equation}
    The $y$-update \eqref{algo:step1} implies $\phi(x_{n+1},y_{n+2})=f(x_{n+1})$ and $\phi(x_n,y_{n+1})=f(x_n)$. We have $\phi(x_{n+1},y_{n+1})=c(x_{n+1},y_{n+1})+f^c(y_{n+1})$ and the $x$-update \eqref{algo:step2} implies $f^c(y_{n+1})=f(x_n)-c(x_n,y_{n+1})$. 

    (ii): If $f$ is both $c$-concave and $c$-cross-convex then by \Cref{prop:conc-conv-fpp} we know that $\phi$ satisfies the five-point property. By \Cref{thm:am-rates}(ii) we have 
    \begin{equation}
        \phi(x_n,y_n) \leq \phi(x,y) + \frac{\phi(x,y_0) - \phi(x_0,y_0)}{n}.
    \end{equation}
    We bound $\phi(x_n,y_n)\geq \phi(x_n,y_{n+1})=f(x_n)$, and take an infimum over $y$ to obtain $f(x)=\inf_y\phi(x,y)$ in the right-hand side. Moreover $\phi(x,y_0) - \phi(x_0,y_0)=c(x,y_0) - c(x_0,y_0)$.
    
    (iii): Works the same way as for (ii).
    
\end{proof}

Let us now examine more closely the conditions required to obtain a convergence rate such as \eqref{eq:descent_sublinear}. We need $f$ to be both $c$-concave and cross-convex. In general $c$-concavity and cross-convexity are nonlocal notions that are difficult to prove. This is the same situation as in Section~\ref{sec:am} where it was highly desirable to have local criteria for the nonlocal five-point property. Here too we can state local criteria for $c$-concavity and cross-convexity. The one for $c$-concavity is known in optimal transport:

\begin{theorem}[Local criterion for $c$-concavity {\cite[Theorem 12.46]{Villani_book_2009}}] 
    \label{thm:differential-criterion-c-concavity}
    Suppose that \ref{ass:cross-curv} holds and that $c$ has nonnegative cross-curvature. Let $f$ be a twice-differentiable function. Suppose that for all $\xb\in X$, there exists $\yh\in Y$ satisfying $-\nabla_xc(\xb,\yh)=-\nabla f(\xb)$ and such that
    \[
        \nabla^2f(\xb) \leq \nabla^2_{xx}c(\xb,\yh).
    \]    
    Then $f$ is $c$-concave. Conversely suppose that $f$ is $c$-concave. Then if $(\xb,\yh)\in X\times Y$ satisfies $-\nabla_xc(\xb,\yh)=-\nabla f(\xb)$, necessarily $\nabla^2f(\xb) \leq \nabla^2_{xx}c(\xb,\yh)$.
\end{theorem}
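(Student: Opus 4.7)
The plan is to reduce both implications to concavity along horizontal $c$-segments, using Lemma~\ref{lemma:delta-c-segments} as the crucial geometric input. For the forward direction, I fix $\bar x \in X$, take $\hat y$ from the hypothesis, and set $\alpha \coloneqq f(\bar x) - c(\bar x, \hat y)$. The key claim is the global inequality
\begin{equation} \label{eq:plan-star}
    f(x) \le c(x, \hat y) + \alpha \quad \text{for all } x \in X.
\end{equation}
Once \eqref{eq:plan-star} is proved for every $\bar x$ (with its associated $\hat y_{\bar x}$), equality at $x=\bar x$ shows that $\bar x$ attains the supremum defining $f^c(\hat y_{\bar x})$, whence $c(\bar x, \hat y_{\bar x}) + f^c(\hat y_{\bar x}) = f(\bar x)$. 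Combined with the trivial inequality $c + f^c \ge f$, this gives $f(\bar x) = \inf_y [c(\bar x, y) + f^c(y)]$, i.e.\ $c$-concavity.

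To prove \eqref{eq:plan-star} I fix $x \in X$ and a horizontal $c$-segment $s \in [0,1] \mapsto (x(s), \hat y)$ joining $\bar x$ to $x$, furnished by \ref{ass:biconvex}, and set $\varphi(s) \coloneqq f(x(s)) - c(x(s), \hat y)$. The gradient hypothesis yields $\varphi'(0) = 0$, so it suffices to show $\varphi''(s) \le 0$ throughout $[0,1]$. At each $s$, pick $y_s \in Y$ satisfying the local hypotheses at $x(s)$. Writing out $\varphi''(s)$ directly from the definition, and then substituting $\nabla f(x(s)) = \nabla_x c(x(s), y_s)$ and bounding $\nabla^2 f(x(s)) \le \nabla^2_{xx} c(x(s), y_s)$, one obtains
\[
    \varphi''(s) \le \frac{d^2}{dt^2}\bigg|_{t=s}\bigl[c(x(t), y_s) - c(x(t), \hat y)\bigr],
\]
where $y_s$ is held fixed. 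The right-hand side is $\le 0$ by Lemma~\ref{lemma:delta-c-segments} applied along the horizontal $c$-segment $(x(\cdot), \hat y)$ with second point $y_s$, since that lemma asserts the convexity of $t \mapsto c(x(t), \hat y) - c(x(t), y_s)$.

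For the converse, suppose $f(\cdot) = \inf_y[c(\cdot, y) + h(y)]$ and that $(\bar x, \hat y)$ satisfies $\nabla f(\bar x) = \nabla_x c(\bar x, \hat y)$. For any $\xi \in T_{\bar x}X$, take a horizontal $c$-segment $s \mapsto (x(s), \hat y)$ with $x(0) = \bar x$ and $\dot x(0) = \xi$, short-time existence coming from \eqref{eq:horizontal-c-segments}. The identity $f(x(s)) - c(x(s), \hat y) = \inf_y\bigl[h(y) + c(x(s), y) - c(x(s), \hat y)\bigr]$ presents the left-hand side as the infimum in $y$ of functions of $s$ each concave by Lemma~\ref{lemma:delta-c-segments}, hence itself concave in $s$. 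The gradient identity makes its first derivative at $s=0$ vanish, so its second derivative there, equal to $\bigl(\nabla^2 f(\bar x) - \nabla^2_{xx}c(\bar x, \hat y)\bigr)(\xi, \xi)$, is $\le 0$; letting $\xi$ range over $T_{\bar x}X$ finishes the proof. The only delicate step is the $\varphi''$ computation in the forward direction, which trades the local Hessian inequality at $x(s)$ (involving the auxiliary $y_s$) for a cross-curvature expression controlled by Lemma~\ref{lemma:delta-c-segments}. This is the standard optimal-transport mechanism by which nonnegative cross-curvature upgrades pointwise second-order information into global convexity along $c$-segments, and no further obstacle is anticipated beyond bookkeeping of derivatives.
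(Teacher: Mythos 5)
Your proof is correct and follows essentially the same route as the paper's: in the forward direction you decompose $f(x(s))-c(x(s),\hat y)$ along a horizontal $c$-segment, use the auxiliary contact point $y_s$ at each $s$ to kill the $\ddot x$ term and bound the Hessian term, and control the remaining difference of costs by Lemma~\ref{lemma:delta-c-segments}, exactly as in the paper's proof of concavity of $a(t)=f(x(t))-c(x(t),\hat y)$. The only (harmless) divergence is in the converse, where you derive concavity of $s\mapsto f(x(s))-c(x(s),\hat y)$ as an infimum of concave functions rather than Taylor-expanding the touching inequality $f(x)-c(x,\hat y)\le f(\bar x)-c(\bar x,\hat y)$; both yield the same Hessian bound.
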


Since our setting is slightly different from the one in Villani's book, we reprove \Cref{thm:differential-criterion-c-concavity}
in Appendix~\ref{app:sec-gd}. Next in the same spirit as \Cref{thm:sc-fpp-am} we offer a semi-local sufficient condition for cross-convexity.

\begin{theorem}[Sufficient conditions for cross-convexity] \label{thm:sc-fpp+gd}    
    Suppose that \ref{ass:cross-curv} and \ref{ass:c-unique-min} hold and that $c$ has nonnegative cross-curvature. Suppose that $f$ is differentiable.
    \begin{enumerate}[(i)]
        \item If $t\mapsto f(x(t))$ is convex on every $c$-segment $t\mapsto (x(t),\yb)$ satisfying $\nabla_xc(x(0),\yb)=0$, then $f$ is $c$-cross-convex.
        
        \item Let $\lambda>0$. If $t\mapsto f(x(t))-\lambda c(x(t),\yb)$ is convex on the same $c$-segments as for \textup{(i)}, then $f$ is $\lambda$-strongly $c$-cross-convex.
    \end{enumerate}
\end{theorem}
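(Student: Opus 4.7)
The plan is to follow the blueprint of the proof of Theorem~\ref{thm:sc-fpp-am} very closely, since cross-convexity as written in \eqref{sc} is essentially the ``$f$-version'' of the reformulation \eqref{eq:fpp-F} of the five-point property, with $F$ replaced by $f$ and with the roles of $y_0,y_1$ played by $\yb,\yh$. Fix $x,\xb\in X$ and $\yb,\yh\in Y$ with $\nabla_x c(\xb,\yb)=0$ and $\nabla_x c(\xb,\yh)=\nabla f(\xb)$. Using the skew-symmetry of $\delta_c$ in its $(2,4)$-slot,
\[
    \delta_c(x,\yb;\xb,\yh) = -\delta_c(x,\yh;\xb,\yb),
\]
so the target inequality \eqref{sc} is equivalent to $f(x)-f(\xb) \geq -\delta_c(x,\yh;\xb,\yb)$.

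First I would invoke \ref{ass:biconvex} to pick a horizontal $c$-segment $t\mapsto(x(t),\yb)$ with $x(0)=\xb$, $x(1)=x$. The crucial point is that the second slot of this $c$-segment is $\yb$, the $y$ for which $\nabla_x c(\xb,\yb)=0$; this is exactly what is needed both to trigger the convexity hypothesis and to make the boundary term in \Cref{lemma:delta-c-segments} collapse to $\nabla f(\xb)$. Applying \Cref{lemma:delta-c-segments} to the four-tuple $(\xb,\yb,x,\yh)$ along this segment gives
\[
    \delta_c(x,\yh;\xb,\yb) \geq -\scalh{\nabla_xc(\xb,\yh)-\nabla_xc(\xb,\yb)}{\dot x(0)} = -\scalh{\nabla f(\xb)}{\dot x(0)},
\]
by our two standing hypotheses on $\yb,\yh$. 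Next I would use convexity of $t\mapsto f(x(t))$ on $[0,1]$: it yields $\scalh{\nabla f(\xb)}{\dot x(0)} \leq f(x(1))-f(x(0))=f(x)-f(\xb)$. Chaining these inequalities delivers \eqref{sc} and proves~(i).

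For (ii), I would simply rerun the same argument with $f$ replaced by $f(\cdot)-\lambda c(\cdot,\yb)$. Its derivative at $x(0)=\xb$ is $\nabla f(\xb)-\lambda\nabla_xc(\xb,\yb)=\nabla f(\xb)$, using $\nabla_xc(\xb,\yb)=0$ once more. Hence the convexity hypothesis in (ii) produces
\[
    \scalh{\nabla f(\xb)}{\dot x(0)}\leq \big(f(x)-\lambda c(x,\yb)\big)-\big(f(\xb)-\lambda c(\xb,\yb)\big),
\]
and combining with the unchanged estimate from \Cref{lemma:delta-c-segments} gives \eqref{lsc}.

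There is no real obstacle here beyond matching the data correctly: the subtle point is that the $c$-segment must have its fixed $y$-endpoint equal to $\yb$ (not $\yh$), so that (a) the condition $\nabla_xc(x(0),\yb)=0$ triggers the semi-local convexity assumption, and (b) the boundary term produced by nonnegative cross-curvature reduces to $-\langle\nabla f(\xb),\dot x(0)\rangle$ via the ``gradient-matching'' condition on $\yh$. Once this is set up, the proof is essentially a one-line chain identical to the analogous step in \Cref{thm:sc-fpp-am}, which is consistent with the fact that, through \Cref{prop:conc-conv-fpp}, cross-convexity is precisely the $f$-side translation of the five-point property for $\phi=c+f^c$.
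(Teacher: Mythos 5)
Your proof is correct and follows essentially the same route as the paper's: choose the horizontal $c$-segment $(x(t),\yb)$ from $\xb$ to $x$ (which satisfies the triggering condition since $\nabla_x c(\xb,\yb)=0$), apply \Cref{lemma:delta-c-segments} together with the skew-symmetry of $\delta_c$, and chain with the convexity of $f$ (resp.\ $f-\lambda c(\cdot,\yb)$) along that segment. The only cosmetic difference is that the paper also invokes \ref{ass:c-unique-min} to note that $\xb$ is the unique minimizer of $c(\cdot,\yb)$, which your direct verification of the first-order condition renders inessential.
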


Similar to \Cref{thm:differential-criterion-c-concavity}, there is a converse result that if $f$ is $c$-cross-convex then  whenever $\nabla_xc(\xb,\yb)=0$ and $-\nabla_xc(\xb,\yh)=-\nabla f(\xb)$, necessarily
\begin{equation}\label{eq:necessary-condition-cc}
    \nabla^2f(\xb)\geq \nabla^2_{xx}c(\xb,\yh)-\nabla^2_{xx}c(\xb,\yb).
\end{equation}
This follows from \Cref{thm:sc-fpp+gd}(i) by looking at $\frac{d^2}{dt^2}f(x(t))$ at $t=0$. We do not know whether \eqref{eq:necessary-condition-cc} is sufficient to obtain cross-convexity. 
If $f$ is $\lambda$-strongly cross-convex then for the same configuration of points,
\begin{equation}\label{eq:necessary-condition-scc}
    \nabla^2f(\xb)\geq \nabla^2_{xx}c(\xb,\yh)-(1-\lambda)\nabla^2_{xx}c(\xb,\yb).
\end{equation}

\begin{proof}[Proof of \Cref{thm:sc-fpp+gd}]
    This proof is similar to the proof of \Cref{thm:sc-fpp-am}.
    
    (i):
    Fix $x,\xb\in X$ and let $\yb,\yh\in Y$ be any points satisfying $\nabla_xc(\xb,\yb)=0$ and $-\nabla_xc(\xb,\yh)=-\nabla f(\xb)$. Let $t\mapsto (x(t),\yb)$ be a $c$-segment with endpoints $x(0)=\xb$ and $x(1)=x$. This $c$-segment exists by \ref{ass:biconvex}. By~\ref{ass:c-unique-min} the set $\argmin_{x\in X}c(x,\yb)$ is nonempty and reduces to a singleton. Then the first-order conditions force this argmin to be $\{\xb\}$. Thus this $c$-segment is of the desired form. By convexity of $f$ along this $c$-segment we have that 
    \begin{equation*}
        f(x)-f(\xb)=f(x(1))-f(x(0))\geq \bracket{\nabla f(\xb),\dot x(0)}.
    \end{equation*}
    But $\nabla f(\xb)=\nabla_xc(\xb,\yh)=\nabla_xc(\xb,\yh)-\nabla_xc(\xb,\yb)$. Then \Cref{lemma:delta-c-segments} gives us 
    \begin{equation*}
        \bracket{\nabla_xc(\xb,\yh)-\nabla_xc(\xb,\yb),\dot x(0)} \geq -\delta_c(x,\yh;\xb,\yb),
    \end{equation*}
     and $-\delta_c(x,\yh;\xb,\yb)=\delta_c(x,\yb;\xb,\yh)$ by the properties of the cross-difference. As a consequence, $f(x)-f(\xb)\geq \delta_c(x,\yb;\xb,\yh)$.

     (ii): Similarly to (i), the convexity of $t\mapsto f(x(t))-\lambda c(x(t),\yb)$ implies
     \[
        f(x)-f(\xb) - \lambda(c(x,\yb)-c(\xb,\yb))\geq \bracket{\nabla f(\xb)-\lambda\nabla_xc(\xb,\yb),\dot x(0)}.
    \]
    Since $\nabla_xc(\xb,\yb)=0$ we can finish the argument as in (i).
\end{proof}

We illustrate practical applications of Theorem~\ref{thm:sc-fpp+gd} in Section~\ref{sec:examples} on a variety of examples.

\paragraph{Related work.} In~\cite[Section 3]{Bolte2016majorization} Bolte and Pauwels obtain (sub)linear convergence rates for a class of majorization--minimization procedures, in a ``semialgebraic'' setting. 
They assume that an upperbound $\phi(x,y)$ of $f(x)$ is given, rather than construct it through a $c$-transform. An example of such a surrogate is given by the moving balls method \cite{Auslender2010moving}. Bolte and Pauwels tackle constraint approximation and use the Kurdyka--Łojasiewicz property to establish their convergence rates \cite[Theorem 3.1]{Bolte2016majorization}.

\subsection{Forward--backward splitting}\label{sec:fb}

In this section we look at the case where the objective function is the sum of two functions $f,g:X\rightarrow \R$,
\begin{equation}\label{eq:pb:fwd-bwd}
    \min_{x\in X} F(x)\coloneqq  f(x)+g(x),
\end{equation}
and where we intend to do an explicit gradient step on $f$ and an implicit step on $g$. Let us be more precise. Given \eqref{eq:pb:fwd-bwd}, if we can find a cost $c$ such that $f$ is $c$-concave, we can then replace $f(x)$ by its majorant $c(x,y)+f^c(y)$. On the other hand we don't want to assume that $g$ is $c$-concave, i.e.\ we would like to consider functions $g$ that are not \emph{smooth}. Assuming that $f$ is $c$-concave, \eqref{eq:pb:fwd-bwd} can then be written as 
\begin{equation}\label{eq:def-phi-fb}
    \min_{x\in X,y\in Y} \phi(x,y)\coloneqq c(x,y)+f^c(y)+g(x).
\end{equation}
We consider the following assumptions.
\begin{enumerate}[resume*=ass]    
    \item $f$, $g$ and $c$ satisfy \label{ass:fg-min-min}
    \begin{enumerate}[\normalfont\textbf{(A\arabic{enumi}\alph*)}]
        \item for each $x\in X$, $\inf_{y\in Y}c(x,y)=0$ and the infimum is attained at a point in $Y$; \label{ass:fb-c}
        \item for each $x\in X$, the function $y\mapsto c(x,y)+f^c(y)$ attains its minimum at a unique point in $Y$;    \label{ass:fb-f} 
        \item for each $y\in Y$, the function $x\mapsto c(x,y)+g(x)$ attains its minimum at a unique point in $X$.\label{ass:fb-g}
    \end{enumerate}
\end{enumerate}
The roles of \ref{ass:fb-f} and \ref{ass:fb-g} are clear in view of doing an alternating minimization on $\phi$, they are the direct analogue of \ref{ass:phi-min-min}. \ref{ass:fb-c} is related to the $c$-concavity of the function $x\mapsto 0$. Its role will be more clear in \Cref{prop:fb_fpp} and \Cref{thm:sc-cross-conc}.
The forward--backward version of \emph{gradient descent with a general cost} then takes the following form.

\begin{algorithm}[Forward--Backward with a general cost] \label{algo:fb}
    Initialize $x_0\in X$ and alternate the following two steps,
    \begin{align}
        y_{n+1} &= \argmin_{y\in Y} c(x_n,y) + f^c(y) + g(x_n), \label{algo:fb:step1}\\
        x_{n+1} &= \argmin_{x\in X} c(x,y_{n+1}) + f^c(y_{n+1}) + g(x). \label{algo:fb:step2}
    \end{align}    
\end{algorithm}
This is simply the alternating minimization of $\phi$ defined by \eqref{eq:def-phi-fb}. 
A direct analogue of \Cref{prop:alg_grad}, whose proof we omit, is

\begin{prop}[A more explicit version of Algorithm~\ref{algo:fb}]\label{cor:fw-bck}
    Suppose that \ref{ass:fg-min-min} holds and that $f$, $g$ and $c$ are differentiable. Suppose that $f$ is $c$-concave. Then Algorithm~\ref{algo:fb} can be written as
    \begin{align}
        -\nabla_x c(x_n, y_{n+1})&=-\nabla f(x_n), \label{algo:fb:step1_grad}\\
        -\nabla_x c(x_{n+1}, y_{n+1}) &=\nabla g(x_{n+1}). \label{algo:fb:step2_grad}
    \end{align}
\end{prop}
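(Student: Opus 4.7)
The plan is to mirror the proof of \Cref{prop:alg_grad} and reduce each update to a first-order optimality condition, exploiting the fact that the extra terms in \eqref{algo:fb:step1}--\eqref{algo:fb:step2} are constant in the variable being optimized over.

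First I would treat the $y$-update. In \eqref{algo:fb:step1} the summand $g(x_n)$ does not depend on $y$, so $y_{n+1}$ solves $\argmin_{y\in Y} c(x_n,y)+f^c(y)$, which is exactly \eqref{algo:step1}. Consequently the argument used for \Cref{prop:alg_grad} applies verbatim: setting $\psi(x,y)=c(x,y)+f^c(y)$, $c$-concavity of $f$ yields $f(x)=\inf_{y\in Y}\psi(x,y)$, and \ref{ass:fb-f} together with differentiability guarantees that the envelope theorem (\Cref{lemma:envelope}) gives $\nabla f(x_n)=\nabla_x\psi(x_n,y_{n+1})=\nabla_x c(x_n,y_{n+1})$. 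This is precisely \eqref{algo:fb:step1_grad}.

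Next I would handle the $x$-update. In \eqref{algo:fb:step2} the term $f^c(y_{n+1})$ is constant in $x$, so $x_{n+1}$ minimizes $x\mapsto c(x,y_{n+1})+g(x)$. Under \ref{ass:fb-g} this minimum is attained at a unique point in $X$, and by differentiability of $c$ and $g$ the first-order necessary condition reads
\[
    \nabla_x c(x_{n+1},y_{n+1})+\nabla g(x_{n+1})=0,
\]
which rearranges to \eqref{algo:fb:step2_grad}.

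There is no real obstacle here beyond bookkeeping: the role of assumption~\ref{ass:fb-f} is to ensure the $y$-update is well-defined (so that $y_{n+1}$ exists and is unique) and \ref{ass:fb-g} plays the symmetric role for the $x$-update; \ref{ass:fb-c} is not needed in this argument and will only enter later when relating $\phi$ to $F=f+g$ via five-point-type inequalities. Since each minimization is unconstrained on the open sets $X$ or $Y$, first-order conditions are necessary without any qualification, so the two differential equations are indeed equivalent reformulations of \eqref{algo:fb:step1}--\eqref{algo:fb:step2} when $f$ is $c$-concave.
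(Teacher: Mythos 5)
Your argument is correct and is precisely the "direct analogue of \Cref{prop:alg_grad}" that the paper invokes while omitting the proof: the $y$-step reduces to \eqref{algo:step1} because $g(x_n)$ is constant in $y$, so the envelope theorem applied to $f(x)=\inf_y c(x,y)+f^c(y)$ gives \eqref{algo:fb:step1_grad}, and the $x$-step is a plain first-order condition for minimizing $x\mapsto c(x,y_{n+1})+g(x)$. Your remarks on which parts of \ref{ass:fg-min-min} are actually used (and that \ref{ass:fb-c} is not needed here) are also accurate.
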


\Cref{algo:fb} can thus be described as alternating an explicit gradient update on $f$ (the forward step \eqref{algo:fb:step1_grad}) and an implicit ``proximal'' update on $g$ (the backward step \eqref{algo:fb:step2} or \eqref{algo:fb:step2_grad}). 
The case $f=0$ corresponds to pure proximal updates. The case $g=0$ corresponds to pure gradient updates, i.e.\ Algorithm~\ref{algo}. 

\begin{remark}[Alternating minimization, gradient descent, forward--backward] \label{rem:connections-am-fb-gd}
    Now that we have seen Algorithms~\ref{algo:am}, \ref{algo} and \ref{algo:fb} it might be useful to make additional connections between them. Let $\phi(x,y)$, $c(x,y)$, $g(x)$ and $h(y)$ be four functions, and suppose that $\phi(x,y)=c(x,y)+g(x)+h(y)$. Then define $F(x)=\inf_y\phi(x,y)$ and $f(x)=\inf_yc(x,y)+h(y)$. Note that $F(x)=f(x)+g(x)$ and that $f$ is automatically $c$-concave. Let us suppose that the objects at play are differentiable, that minimizers exist and are unique, etc. Then the alternating minimization of $\phi$ can be written as 
    \begin{equation}\label{eq:am-as-ggd}
        \begin{aligned}
            -\nabla_x \phi(x_n, y_{n+1})&=-\nabla F(x_n),\\
            -\nabla_x \phi(x_{n+1}, y_{n+1}) &=0.
        \end{aligned}
    \end{equation}
    Here the envelope theorem is used to formulate the $y$-update using $F$. Therefore Algorithm~\ref{algo:am} can always be written as Algorithm~\ref{algo}, by taking $\phi$ as the cost and $F$ as the objective function. Next, expressing $\phi$ in terms of $c$ we obtain the equivalent form of \eqref{eq:am-as-ggd},
    \begin{equation}\label{eq:am-as-fb}
        \begin{aligned}
            -\nabla_x c(x_n, y_{n+1})&=-\nabla f(x_n),\\
            -\nabla_x c(x_{n+1}, y_{n+1}) &=\nabla g(x_n).
        \end{aligned}
    \end{equation}
    This is exactly \eqref{algo:fb:step1_grad}--\eqref{algo:fb:step2_grad}, the explicit version of Algorithm~\ref{algo:fb}. The reason why we obtain this forward--backward iteration is that changing $h$ into $f^c$ or vice versa leaves the $y$-update unchanged. In other words, in \eqref{algo:fb:step1}, or even in \eqref{algo:step1}, $f^c$ can be replaced by any other function $h$ that satisfies $\inf_{y\in Y} c(x,y)+h(y)=\inf_{y\in Y}c(x,y)+f^c(y)$, for all $x\in X$.
\end{remark}

Let us now focus on convergence rates. In Section~\ref{sec:gd:rates}, when $\phi(x,y)=c(x,y)+f^c(y)$ we introduced cross-convexity as the property for a $c$-concave $f$ corresponding to the five-point property of $\phi$. When instead $\phi(x,y)=c(x,y)+g(x)$ and \ref{ass:fb-c} holds, the five-point property corresponds to a type of cross-convexity with reversed inequalities. We name it cross-concavity and define it below. We suppose that $c$ is differentiable.

\begin{definition}[cross-concavity]\label{def:cross-concavity}
    We say that a differentiable function $f\colon X\to\R$ is $c$-cross-concave if for all $x,\xb\in X$ and any $\yb,\yh\in Y$ verifying $\nabla_xc(\xb,\yb)=0$ and $-\nabla_xc(\xb,\yh)=-\nabla f(\xb)$ we have 
    \[
        f(x)\leq f(\xb)+\delta_c(x,\yb;\xb,\yh).
    \]
    In addition let $\lambda>0$. We say that $f$ is $\lambda$-strongly $c$-cross-concave if under the same conditions as above we have 
    \[
        f(x)\leq f(\xb)+\delta_c(x,\yb;\xb,\yh)-\lambda(c(x,\yb)-c(\xb,\yb)).
    \]
\end{definition}

To obtain convergence rates for forward--backward we will require $-g$ to be $c$-cross-concave. Note that in contrast to the standard Euclidean case, cross-concavity of $-g$ is in general not equivalent to cross-convexity of $g$. This difference can be seen in particular when $c$ has nonnegative cross-curvature, comparing the semi-local criteria of \Cref{thm:sc-fpp+gd} (for cross-convexity) and \Cref{thm:sc-cross-conc} below (for cross-concavity). These two results suppose convexity/concavity of the function on different $c$-segments.

We are now ready to state our convergence rates for the forward--backward iteration, and delay proofs to Appendix~\ref{app:sec-gd}. 

\begin{theorem}[Convergence rates for Algorithm~\ref{algo:fb}]\label{thm:cv_fb}
    Suppose that $f$, $g$ and $c$ are differentiable and satisfy \ref{ass:fg-min-min}. Consider the iterates $x_n,y_n$ defined by Algorithm~\ref{algo:fb}. Define $F$ by \eqref{eq:pb:fwd-bwd} and take $\yb_0\in \argmin_{y\in Y}c(x_0,y)$. Then the following statements hold.

    \begin{enumerate}[(i)]        
        \item Suppose that $f$ is $c$-concave. Then we have the descent property 
        \[
            F(x_{n+1}) \le F(x_n).
        \]
    
        \item Suppose in addition that $f$ is $c$-cross-convex and that $-g$ is $c$-cross-concave. Then for any $x\in X$, $n\geq 1$,
        \[
           F(x_n)\le F(x) + \frac{c(x,\yb_0)}{n}.
        \]
    
        \item Suppose in addition that $f$ is $\lambda$-strongly $c$-cross-convex and that $-g$ is $\mu$-strongly $c$-cross-concave for some $\lambda,\mu\in [0,1)$ with $\lambda+\mu>0$. Then for any $x\in X$, $n\geq 1$,
        \[
            F(x_n)\le F(x) + \frac{(\lambda+\mu)\, c(x,\yb_0)}{\Lambda^n-1},
         \]
         with $\Lambda=\frac{1+\mu}{1-\lambda}$.
    \end{enumerate}    
\end{theorem}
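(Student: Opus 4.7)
The plan is to apply the alternating-minimization machinery of \Cref{thm:am-rates} to the surrogate $\phi(x,y)=c(x,y)+f^c(y)+g(x)$, but to treat the initial step separately since $x_0$ is not an argmin of $\phi(\cdot,y)$ for any natural $y$. Part (i) is immediate: by $c$-concavity $\phi(x_n,y_{n+1})=F(x_n)$ and $\phi(x_{n+1},y_{n+2})=F(x_{n+1})$, while the $x$- and $y$-updates give the sandwich $\phi(x_{n+1},y_{n+2})\le\phi(x_{n+1},y_{n+1})\le\phi(x_n,y_{n+1})$.

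For (ii) and (iii) the central object is the auxiliary point $\yb_n\in\argmin_y c(x_n,y)$, whose existence is guaranteed by \ref{ass:fb-c}. Since $c(x_n,\yb_n)=0$ and $c(x,\yb_n)\ge\inf_{y'}c(x,y')=0$ for every $x\in X$, the point $x_n$ is also a minimizer of $c(\cdot,\yb_n)$, so $\nabla_x c(x_n,\yb_n)=0$. Together with $-\nabla_x c(x_n,y_{n+1})=-\nabla f(x_n)$ from the $y$-update and, for $n\ge 1$, $-\nabla_x c(x_n,y_n)=\nabla g(x_n)$ from the previous $x$-update, this lets us apply strong $c$-cross-convexity of $f$ at $(\xb,\yb,\yh)=(x_n,\yb_n,y_{n+1})$ and strong $c$-cross-concavity of $-g$ at $(x_n,\yb_n,y_n)$. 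Adding the two and using the elementary identity
\[
    \delta_c(x,\yb_n;x_n,y_{n+1})-\delta_c(x,\yb_n;x_n,y_n)=\delta_c(x,y_n;x_n,y_{n+1})
\]
together with $c(x_n,\yb_n)=0$ yields, for $n\ge 1$,
\[
    F(x)\ge F(x_n)+\delta_c(x,y_n;x_n,y_{n+1})+(\lambda+\mu)\,c(x,\yb_n).
\]

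Set $C_n\coloneqq c(x,y_n)+f^c(y_n)\ge f(x)$ (by $c$-concavity) and $D_n\coloneqq C_n-f(x)\ge 0$. Expanding the $\delta_c$ above and invoking the $y$-update inequality $c(x_n,y_{n+1})+f^c(y_{n+1})\le c(x_n,y_n)+f^c(y_n)$ turns the previous display into $F(x_n)-F(x)\le D_n-D_{n+1}-(\lambda+\mu)\,c(x,\yb_n)$. Separately, strong $c$-cross-convexity of $f$ alone at $(x_n,\yb_n,y_{n+1})$---valid already for $n=0$---combined with $f^c(y_{n+1})=f(x_n)-c(x_n,y_{n+1})$ gives
\[
    D_{n+1}\le(1-\lambda)\,c(x,\yb_n)\qquad(n\ge 0).
\]
Eliminating $c(x,\yb_n)$ between these two inequalities produces the Lyapunov recursion $F(x_n)-F(x)\le D_n-\Lambda\,D_{n+1}$ for $n\ge 1$, with $\Lambda=(1+\mu)/(1-\lambda)$. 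Multiplying by $\Lambda^{n-1}$ and summing from $n=1$ to $N$, the right-hand side telescopes and is bounded by $D_1\le(1-\lambda)\,c(x,\yb_0)$; the descent from (i) lower-bounds the left-hand side by $[F(x_N)-F(x)]\,(\Lambda^N-1)/(\Lambda-1)$. Rearranging and simplifying $(\Lambda-1)(1-\lambda)=\lambda+\mu$ gives (iii); specializing to $\lambda=\mu=0$ (so $\Lambda=1$ and the geometric sum collapses to $N$) gives (ii).

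The main obstacle is arranging cross-convexity of $f$ and cross-concavity of $-g$ to be applied at a \emph{common} auxiliary point $\yb_n$, which is what allows the two cross-differences to combine into a single $\delta_c(x,y_n;x_n,y_{n+1})$ indexed by the algorithm's iterates and hence telescope through $D_n$. Assumption \ref{ass:fb-c} is essential here, not just because it guarantees the existence of $\yb_n$, but because it is what forces $x_n$ to be a critical point of $c(\cdot,\yb_n)$---the nonobvious fact that unlocks the simultaneous application of both cross-properties at the same $\yb$.
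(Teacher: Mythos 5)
Your proof is correct, but it organizes the argument differently from the paper. The paper applies strong cross-convexity of $f$ at the base point $x_n$ (with auxiliary points $\yb_n,y_{n+1}$) and strong cross-concavity of $-g$ at the \emph{next} base point $x_{n+1}$ (with $\yb_{n+1},y_{n+1}$); summing the two and invoking the descent inequality yields the one-step recursion $F(x_{n+1})\le F(x)+(1-\lambda)c(x,\yb_n)-(1+\mu)c(x,\yb_{n+1})$, so the telescoping variable is $c(x,\yb_n)$ itself. You instead apply both cross-properties at the \emph{same} base point $x_n$ (using $y_n$ from the previous $x$-update as the $\yh$ for $-g$, valid for $n\ge 1$), which produces a strong-five-point-type inequality $F(x)\ge F(x_n)+\delta_c(x,y_n;x_n,y_{n+1})+(\lambda+\mu)c(x,\yb_n)$; you then telescope through the surrogate values $D_n=c(x,y_n)+f^c(y_n)-f(x)$ and remove the mismatched remainder $(\lambda+\mu)c(x,\yb_n)$ via the separate bound $D_{n+1}\le(1-\lambda)c(x,\yb_n)$, which is exactly the device needed to bridge the gap the paper alludes to when it says strong cross-convexity/concavity do not quite give the strong five-point property. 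Both routes are valid and yield identical constants after the algebra $(\Lambda-1)(1-\lambda)=\lambda+\mu$; yours makes the connection to \Cref{thm:am-rates} and the five-point property more transparent, at the cost of the extra auxiliary bound on $D_{n+1}$, while the paper's staggered choice of base points avoids that auxiliary step entirely. All the individual verifications in your argument (that $\nabla_xc(x_n,\yb_n)=0$ follows from \ref{ass:fb-c}, that $-\nabla_xc(x_n,y_n)=\nabla g(x_n)$ from the $x$-update, the $\delta_c$ identity, and the handling of the initial index via $D_1\le(1-\lambda)c(x,\yb_0)$) check out.
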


Since cross-convexity of $f$ is related to the five-point property of $\phi(x,y)=c(x,y)+f^c(y)$ and cross-concavity of $-g$ is related to the five-point property of $\phi(x,y)=c(x,y)+g(x)$, it might be natural to ask whether jointly they imply the five-point property of $\phi(x,y)+f^c(y)+g(x)$. That is indeed the case, as stated below. However strong versions of cross-convexity and cross-concavity do not give exactly the strong five-point property, which is partly why \Cref{thm:cv_fb} is not a directly corollary of \Cref{thm:am-rates} and needs its own proof.

\begin{prop}\label{prop:fb_fpp}
    Suppose that $c$, $f$ and $g$ are differentiable and satisfy \ref{ass:fg-min-min}. Suppose that $f$ is $c$-concave and $c$-cross-convex and that $-g$ is $c$-cross-concave. Then $\phi(x,y)=c(x,y)+f^c(y)+g(x)$ satisfies the five-point property \eqref{fpp}.
\end{prop}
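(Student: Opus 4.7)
The plan is to reduce \eqref{fpp} to a single inequality expressed with the cross-difference $\delta_c$, and then derive that inequality by combining $c$-cross-convexity of $f$ and $c$-cross-concavity of $-g$ around a common base point in $Y$ produced by assumption \ref{ass:fb-c}.

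First I would expand \eqref{fpp} for $\phi(x,y) = c(x,y) + f^c(y) + g(x)$. The $g(x)$ and $f^c(y_0)$ terms cancel across the inequality, and taking an infimum over $y$ on the right-hand side while using $c$-concavity of $f$ to identify $\inf_{y} c(x,y) + f^c(y) = f(x)$ reduces \eqref{fpp} to showing
\[
    F(x) - F(x_0) \;\geq\; \delta_c(x,y_0;x_0,y_1),
\]
where $F = f+g$, $x_0 = \argmin_x c(x,y_0) + g(x)$, and $y_1$ is characterized (via the envelope theorem, as in \Cref{prop:alg_grad}) by $f^c(y_1) = f(x_0) - c(x_0,y_1)$ together with $-\nabla_x c(x_0,y_1) = -\nabla f(x_0)$.

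Second, I would locate the triples of $y$-points needed to activate the definitions of cross-convexity and cross-concavity at $\xb = x_0$. The $x$-update \eqref{algo:fb:step2_grad} gives $-\nabla_x c(x_0,y_0) = \nabla g(x_0)$, i.e.\ $-\nabla_x c(x_0,y_0) = -\nabla(-g)(x_0)$, so $y_0$ plays the role of $\yh$ in the cross-concavity inequality for $-g$. Likewise $y_1$ plays the role of $\yh$ in the cross-convexity inequality for $f$. To furnish the common base point $\yb$ satisfying $\nabla_x c(x_0,\yb) = 0$, I would invoke \ref{ass:fb-c}: since $\inf_y c(x_0,y) = 0 = \inf_{X\times Y} c$ is attained, any minimizer $\yb_0$ makes $(x_0,\yb_0)$ a global minimum of $c$ on $X\times Y$, forcing $\nabla_x c(x_0,\yb_0) = 0$.

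Third, I would apply cross-convexity of $f$ at $(\xb,\yb,\yh) = (x_0,\yb_0,y_1)$ and cross-concavity of $-g$ at $(\xb,\yb,\yh) = (x_0,\yb_0,y_0)$ to obtain
\[
    f(x) - f(x_0) \;\geq\; \delta_c(x,\yb_0;x_0,y_1), \qquad g(x) - g(x_0) \;\geq\; -\delta_c(x,\yb_0;x_0,y_0).
\]
Summing and using that $\delta_c(\cdot\,,\cdot\,;\cdot\,,\cdot\,)$ is linear in its arguments (so the $\yb_0$ terms cancel), the right-hand side telescopes to $\delta_c(x,y_0;x_0,y_1)$, which is precisely what was needed in the first step. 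The main subtlety is purely bookkeeping, namely choosing the common auxiliary point $\yb_0$ so that the two inequalities can be added and their $\yb_0$-parts cancel; the existence of such a $\yb_0$ is exactly the content of \ref{ass:fb-c}, and no smoothness or curvature hypothesis beyond the given ones is needed.
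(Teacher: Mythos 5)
Your proposal is correct and follows essentially the same route as the paper's proof: reduce \eqref{fpp} to the form $F(x)\geq F(x_0)+\delta_c(x,y_0;x_0,y_1)$ with $F=f+g$, produce the common base point $\yb_0$ with $\nabla_xc(x_0,\yb_0)=0$ from \ref{ass:fb-c}, and add the cross-convexity inequality for $f$ at $(x_0,\yb_0,y_1)$ to the cross-concavity inequality for $-g$ at $(x_0,\yb_0,y_0)$ so the $\yb_0$-terms cancel. No gaps.
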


\Cref{prop:fb_fpp} is proved in Appendix~\ref{app:sec-gd}. To conclude this section, and in the spirit of \Cref{thm:sc-fpp+gd}, let us now state a semi-local criterion for cross-concavity, when $c$ has nonnegative cross-curvature.

\begin{theorem}[Sufficient conditions for cross-concavity] \label{thm:sc-cross-conc}
    Suppose that \ref{ass:cross-curv} and \ref{ass:c-unique-min}  hold and that $c$ has nonnegative cross-curvature. Suppose that $g$ is differentiable.
    \begin{enumerate}[(i)]
        \item If $t\mapsto g(x(t))$ is convex on every $c$-segment $t\mapsto (x(t),\yh)$ satisfying $-\nabla_xc(x(0),\yh)=\nabla g(x(0))$, then $-g$ is $c$-cross-concave.
        
        \item Let $\lambda>0$. If $t\mapsto g(x(t))-\lambda c(x(t),y)$ is convex on the same $c$-segments as for \textup{(i)}, then $-g$ is $\lambda$-strongly $c$-cross-concave.
    \end{enumerate}
\end{theorem}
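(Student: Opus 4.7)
The proof will mirror that of Theorem~\ref{thm:sc-fpp+gd}, swapping the roles of $\yb$ and $\yh$ so that the $c$-segment is now fixed at $\yh$ rather than $\yb$. First, I would rewrite the conclusion: cross-concavity of $-g$ is the inequality $g(\xb)-g(x)\leq\delta_c(x,\yb;\xb,\yh)$, and strong cross-concavity reads $g(x)-g(\xb)\geq -\delta_c(x,\yb;\xb,\yh)+\lambda(c(x,\yb)-c(\xb,\yb))$, so I want to produce these.

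For (i), fix $x,\xb\in X$ and $\yb,\yh\in Y$ with $\nabla_xc(\xb,\yb)=0$ and $-\nabla_xc(\xb,\yh)=\nabla g(\xb)$. Using \ref{ass:biconvex}, pick a horizontal $c$-segment $t\mapsto(x(t),\yh)$ with $x(0)=\xb$ and $x(1)=x$; by the gradient hypothesis this is precisely a $c$-segment of the type on which convexity of $g\circ x$ is assumed. By convexity,
\[
    g(x)-g(\xb)\geq\bracket{\nabla g(\xb),\dot x(0)}=-\bracket{\nabla_xc(\xb,\yh),\dot x(0)}.
\]
Now apply Lemma~\ref{lemma:delta-c-segments} to this $c$-segment with the auxiliary point $\yb$:
\[
    \delta_c(x,\yb;\xb,\yh)\geq -\bracket{\nabla_xc(\xb,\yb)-\nabla_xc(\xb,\yh),\dot x(0)}=\bracket{\nabla_xc(\xb,\yh),\dot x(0)},
\]
where in the last step I use $\nabla_xc(\xb,\yb)=0$. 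Chaining the two inequalities yields $g(x)-g(\xb)\geq-\delta_c(x,\yb;\xb,\yh)$, which is exactly the cross-concavity of $-g$ after rearranging.

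For (ii), replace the convexity of $g\circ x$ by that of $g(x(t))-\lambda c(x(t),\yb)$ along the same $c$-segment. This gives
\[
    g(x)-g(\xb)-\lambda\big(c(x,\yb)-c(\xb,\yb)\big)\geq\bracket{\nabla g(\xb)-\lambda\nabla_xc(\xb,\yb),\dot x(0)}=\bracket{\nabla g(\xb),\dot x(0)},
\]
the last equality once again using $\nabla_xc(\xb,\yb)=0$. Combining with the same bound on $\bracket{\nabla_xc(\xb,\yh),\dot x(0)}$ coming from Lemma~\ref{lemma:delta-c-segments} (which is unchanged from (i)) yields the strong cross-concavity of $-g$.

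There is no real obstacle here: the crucial structural fact that makes everything go through is that $\nabla_xc(\xb,\yb)=0$ kills the extra $\lambda$-term in the envelope-type computation, so the strong version inherits directly from the plain one. The only subtlety to flag is to make sure that one chooses the correct $c$-segment (fixed $\yh$, not $\yb$), since this is where the analogy with Theorem~\ref{thm:sc-fpp+gd} is mirrored rather than duplicated; assumption \ref{ass:c-unique-min}, included for symmetry with the cross-convexity statement, plays no essential role in this particular proof.
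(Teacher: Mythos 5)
Your proof is correct and follows essentially the same route as the paper's: both choose the $c$-segment $(x(t),\yh)$ with $x(0)=\xb$, $x(1)=x$, apply Lemma~\ref{lemma:delta-c-segments} with auxiliary point $\yb$, and use $\nabla_xc(\xb,\yb)=0$ to absorb the extra terms (the paper merely phrases it via concavity of $f=-g$ and re-derives \eqref{eq:delta-and-path} from \eqref{eq:diff-cost}). Your reading of the ambiguous $y$ in part (ii) as $\yb$ is the intended one, and your observation that \ref{ass:c-unique-min} is not actually needed here is accurate.
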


\Cref{thm:sc-cross-conc} is proven in Appendix~\ref{app:sec-gd}. We see that the $c$-segments considered in \Cref{thm:sc-cross-conc} are different from those in \Cref{thm:sc-fpp+gd}, which were of the form $(x(t),\yb)$ with $\nabla_xc(x(0),\yb)=0$.

\textbf{Context.} \Cref{algo:fb} is a generalization of the classical forward--backward splitting to general costs, beyond the Euclidean setting. We refer to \cite{Combettes2011} for a review of forward--backward methods and to \cite{Davis2016} for a compendium of rates. Generalizations to more than two functions also exist. In the classical Euclidean setting (i.e.\ for quadratic costs), sublinear rates of convergence of the values were shown in \cite[Theorem 3.1]{Beck2009fista} and linear rates mentioned in \cite[Proposition 2]{Bredies2008}. \cite[Theorems 4 and 5]{Nesterov2012} presents both rates, which we recover through our analysis. Note that for the standard Euclidean forward--backward splitting the focus is in general not on the convergence of the values as done in \Cref{thm:cv_fb}, but on the linear convergence of the iterates as in \cite{Chen1997}, obtained by studying the non-expansivess of the operators appearing in the scheme.

Let us now mention an approach that is related to ours, but focused on a continuous-time setting. In their book~\cite{AmbrosioGigliSavare_book}, Ambrosio, Gigli and Savaré develop a theory to rigorously define a gradient flow $\dot x(t)=-\nabla g(x(t))$ on a general \emph{metric space} $(X,d)$. Their starting point is to define a discrete flow $x_{n+1}\in\argmin_x g(x)+\frac{1}{2\tau}d^2(x_n,x)$, where $\tau>0$ is a time step, and then construct the continuous flow $x(t)\approx x_{n\tau}$ by taking the limit $\tau\to 0$. We see that the discrete flow is a proximal method with a movement limiter $\frac{1}{2\tau}d^2(x_n,x)$ that is based on a general metric distance $d(x,y)$. When this distance is the Euclidean distance they recover the classical proximal iteration (implicit Euler). With our notation, their proximal scheme can be seen as the alternating minimization of the function $\phi(x,y)=c(x,y)+g(x)$ where the cost is $c(x,y)=\frac{1}{2\tau}d^2(x,y)$. Note that the interests of \cite{AmbrosioGigliSavare_book} differ from ours since their focus is on continuous flows in infinite-dimensional spaces in a nonsmooth setting.

% Detailed examples

\section{Detailed examples and applications} \label{sec:examples}

In this section, we develop some of the examples sketched in the previous sections and look at new ones. We study the $c$-concavity and cross-convexity conditions on concrete costs and detail the convergence rates obtained by our theory, recovering some well-known ones and obtaining new ones.

\subsection{Gradient descent and mirror descent} \label{sec:examples:mirror-descent}

In this section we study mirror descent, for which the cost function is a Bregman divergence. We show that $c$-concavity corresponds to \emph{relative smoothness}, introduced by \cite{bauschke2017descent}, and that (strong) cross-convexity corresponds to \emph{relative (strong) convexity}, as defined by \cite{LuFreundNesterov}. We recover the convergence rates given in these two articles.

Let $X$ be a finite-dimensional vector space and $u\colon X\to\R$ be a strictly convex, twice differentiable function with a non-singular Hessian. We take $Y=X$ and consider the Bregman divergence cost 
\begin{equation}\label{eq:ex-bregman-cost}
    c(x,y)=u(x|y),
\end{equation}
defined as follows:

\begin{definition}[Bregman divergence \cite{Bregman1967}] \label{def:Bregman}
    The Bregman divergence of $u$ is the function $u(\cdot|\cdot)\colon X\times X\to\R$ defined by
    \begin{equation*}\label{eq:def_bregman}
        u(x'|x)=u(x')-u(x)-\scalh{\nabla u(x)}{x'-x}.
    \end{equation*}
\end{definition}

Bregman divergences are especially useful in convex analysis since a differentiable function $v$ is convex  (resp.\ strictly convex) if and only if for all $x,x'\in X$, $v(x'|x)\ge 0$ (resp.\ with equality only if $x'=x$), see e.g.\ \cite[Lemma 12]{AubinKorbaLeger}. 
Also, Bregman divergences generalize the square of the Euclidean distance: let $\norm{\cdot}$ be any Euclidean norm on $X$ and define $q(x)=\frac12 \norm{x-a}^2$, where $a$ is any point in $X$. Then $q(x'|x)=\frac12 \norm{x'-x}^2$. 

For the Bregman cost \eqref{eq:ex-bregman-cost} we have $-\nabla_xc(x,y)=\nabla u(y)-\nabla u(x)$. Therefore the $y$-update in Algorithm~\ref{algo:gd-explicit} takes the form $\nabla u(y_{n+1})-\nabla u(x_n)=-\nabla f(x_n)$, while the $x$-update is given by $\nabla u(x_{n+1})=\nabla u(y_{n+1})$. By strict convexity of $u$ we have that $x_{n+1}=y_{n+1}$ and therefore Algorithm~\ref{algo:gd-explicit} is given by 
\begin{equation}\label{eq:mirror_descent_grad}
    \nabla u(x_{n+1})-\nabla u(x_n)=-\nabla f(x_n).
\end{equation}
This is the mirror descent iteration~\cite{NemirovskyYudinBook1983,BeckTeboulle2003}. When $u(x)=\frac{L}{2}\norm{x-a}^2$, where $\norm{\cdot}$ is a Euclidean norm, $L>0$ and $a\in X$, then \eqref{eq:mirror_descent_grad} reads 
\[
    x_{n+1}-x_n=-\frac1L\nabla f(x_n).
\]
This is the standard gradient descent iteration with fixed step size $1/L$. Thus mirror descent is an extension of gradient descent where the Euclidean distance squared $\frac{L}{2}\norm{x'-x}^2$ is replaced by a Bregman divergence $u(x'|x)$.

We now turn our attention to $c$-concavity and cross-convexity. We will show that they correspond to the standard notions of (relative) smoothness and strong convexity, which we now review.
In the following definition we have fixed a Euclidean norm $\norm{\cdot}$ on $X$, $q(x)\coloneqq\frac12 \norm{x-a}^2$, $I_d$ denotes the $d\times d$ identity matrix and inequalities between symmetric matrices are meant in the standard order defined by the convex cone of positive semi-definite matrices.

\begin{definition}[Relative smoothness and convexity]\label{def:rel_conv}
    Let $L>0$, $\lambda > 0$, and consider a twice differentiable function $f\colon X\rightarrow \R$.
    \begin{enumerate}[(i)]
        \item \label{it:L_smooth} 
        We say that $f$ is $L$-smooth if $Lq-f$ is convex. Equivalently, if $\nabla^2 f\le LI_d$, or if $f(x'|x)\le \frac{L}{2}\norm{x'-x}^2=Lq(x'|x)$. 
        
        \item \label{it:L_relsmooth} 
        More generally, $f$ is smooth \emph{relatively to} $u$ if $u-f$ is convex. Equivalently, if $\nabla^2f\leq \nabla^2 u$, or if $f(x'|x)\le u(x'|x)$, i.e.\ 
        \begin{equation}\label{eq:L-rel-smooth}
            f(x')\le f(x) + \ps{\nabla f(x),x'-x} + u(x'|x).
        \end{equation}
        
        \item \label{it:mu_conv} 
        We say that $f$ is $\lambda$-strongly convex if $f-\lambda q$ is convex. Equivalently, if $\nabla^2f\geq \lambda I_d$, or if $f(x'|x)\geq \frac{\lambda}{2}\norm{x'-x}^2=\lambda q(x'|x)$.
        
        \item \label{it:mu_relconv}
        More generally, $f$ is $\lambda$-strongly convex \emph{relatively to} $u$ if $f-\lambda u$ is convex. Equivalently, if $\nabla^2f\geq \lambda\nabla^2u$, or if $f(x'|x)\geq \lambda u(x'|x)$.
    \end{enumerate}
\end{definition}

The various equivalences present in the above definition are classical, see for instance \cite[Appendix A.1]{dAspremont2021}. The relative smoothness property \ref{it:L_relsmooth} suggests using the right-hand side of~\eqref{eq:L-rel-smooth} as a majorizing surrogate. Indeed this leads to the variational form of mirror descent~\cite{BeckTeboulle2003}
\begin{equation}\label{eq:def_mirror-descent}
    x_{n+1}=\argmin_{x\in X} f(x_n)+\bracket{\nabla f(x_n),x-x_n}+u(x|x_n).
\end{equation}
Assuming the minimum to exist and be unique, the first-order optimality conditions applied to \eqref{eq:def_mirror-descent} give the mirror descent iteration \eqref{eq:mirror_descent_grad}. The surrogate in \eqref{eq:def_mirror-descent} is the same one we construct for our gradient descent method, up to a reparametrization in $y$ (and the $x_n$ iterates are unchanged when reparametrizing $y$, see Remark~\ref{rem:reparam}). Let us be more precise.
Define as in Section~\ref{sec:gd} the surrogate
\[
    \phi(x,y)=u(x|y)+f^c(y),
\]
and as in \eqref{eq:def_mirror-descent} the function 
\begin{align}
    \tilde\phi(x,\yt)&=f(\yt)+\bracket{\nabla f(\yt),x-\yt} + u(x|\yt)\label{eq:background:def-s2}\\
    &= f(x) + (u-f)(x|\yt),\label{eq:background:def-s}
    \end{align}
for $x,\yt\in X$. Then we have the following result.

\begin{lemma}
    Suppose that $u-f$ is strictly convex. We then have $\tilde\phi(x,\yt)=\phi(x,y)$ when $\nabla u(y)=\nabla u(\yt)-\nabla f(\yt)$.
\end{lemma}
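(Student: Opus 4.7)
The plan is to evaluate $f^c(y)$ explicitly using the first-order optimality condition, then expand both Bregman divergences and verify that the formula for $\tilde\phi(x,\yt)$ emerges after a simple rearrangement.

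First I would compute $f^c(y)=\sup_{x\in X}\,f(x)-u(x|y)$. The inner objective equals $(f-u)(x)+u(y)+\langle\nabla u(y),x-y\rangle$, which, since $u-f$ is strictly convex, is strictly concave in $x$. Hence it has at most one critical point; where it exists it is the unique maximizer, characterized by the first-order condition $\nabla f(x)-\nabla u(x)+\nabla u(y)=0$, equivalently
\begin{equation*}
    \nabla u(x)=\nabla u(y)+\nabla f(x).
\end{equation*}
Under the hypothesis $\nabla u(y)=\nabla u(\yt)-\nabla f(\yt)$, the choice $x=\yt$ solves this equation. Therefore the supremum defining $f^c(y)$ is attained at $\yt$, giving
\begin{equation*}
    f^c(y)=f(\yt)-u(\yt\mid y).
\end{equation*}

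Next I would expand both Bregman terms. Using the definition of $u(\cdot\mid\cdot)$,
\begin{equation*}
    \phi(x,y)=u(x\mid y)+f^c(y)=u(x)-u(\yt)-\langle\nabla u(y),x-\yt\rangle+f(\yt).
\end{equation*}
The two $u(y)$-terms and the two $\langle\nabla u(y),\cdot\rangle$ contributions coming from $u(x\mid y)$ and $-u(\yt\mid y)$ cancel, leaving an inner product only in $x-\yt$. Substituting the hypothesis $\nabla u(y)=\nabla u(\yt)-\nabla f(\yt)$ into that inner product splits it as
\begin{equation*}
    -\langle\nabla u(\yt),x-\yt\rangle+\langle\nabla f(\yt),x-\yt\rangle,
\end{equation*}
so that $\phi(x,y)=u(x\mid\yt)+\langle\nabla f(\yt),x-\yt\rangle+f(\yt)=\tilde\phi(x,\yt)$.

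The only delicate point is the existence of the maximizer in the $c$-transform, but strict convexity of $u-f$ makes the objective strictly concave and the critical-point identity constructed from the hypothesis provides that maximizer in closed form. Everything else is a direct algebraic manipulation of the Bregman identity and does not require any additional smoothness or boundedness assumptions beyond what is already stated in the lemma.
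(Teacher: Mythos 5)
Your proof is correct and follows essentially the same route as the paper's: both arguments hinge on using the first-order condition and strict convexity of $u-f$ to identify the maximizer in the $c$-transform as $\yt$, yielding $f^c(y)=f(\yt)-u(\yt|y)$. The only cosmetic difference is that you then expand the Bregman terms fully, whereas the paper observes that the difference $r(x)=\phi(x,y)-\tilde\phi(x,\yt)$ has vanishing gradient and evaluates it at $x=\yt$.
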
 
\begin{proof}
    Fix $y,\yt$ satisfying $\nabla u(y)=\nabla u(\yt)-\nabla f(\yt)$. Define $r(x)=\phi(x,y)-\tilde\phi(x,\yt)$ and let us show that $r$ is identically zero. A direct computation shows that $\nabla r(x)=0$. Let us thus look at the value of $r$ at $x=\yt$. First we have $f^c(y)=\sup_{x'}f(x')-u(x'|y)$. The first-order conditions say that the optimal $x'$ satisfies $\nabla (u-f)(x')=\nabla u(y)$. But $\nabla u(y)=\nabla (u-f)(\yt)$ and by strict convexity of $u-f$ necessarily $x'=\yt$. We deduce that $f^c(y)=f(\yt)-u(\yt|y)$. Therefore $r(\yt)=u(\yt|y)+f^c(y)-f(\yt)=0$.    
\end{proof}

The difference between the surrogates $\phi$ and $\tilde\phi$ lies in how the family of majorizing functions is parametrized: by its minimizer $y$ for $\phi$ or by its tangent point $\yt$ with $f$ for $\tilde\phi$.
We can also see directly that an alternating minimization of $\tilde\phi$ gives us mirror descent.
If $f$ is smooth relatively to $u$ then $\tilde\phi$ is a function majorizing $f$ (since $(u-f)(x|\yt)\ge 0$) and tangent to $f$ at $x=\yt$ (since $(u-f)(\yt|\yt)= 0$). Suppose that $u-f$ is strictly convex. Then in the alternating minimization of $\tilde\phi$, the $y$-update gives $\yt_{n+1}=x_n$ while the $x$-update is the mirror descent step \eqref{eq:def_mirror-descent}. If we compare with the alternating minimization of $\phi$ as done in Algorithm~\ref{algo}, with iterates $x_n,y_n$, we see that $\tilde y_{n+1}=y_n$. 

\begin{remark}[Mirror descent and Bregman proximal point] \label{rmk:md_Breg_prox}
    In the form \eqref{eq:background:def-s}, mirror descent $x_{n+1}=\argmin_xf(x)+(u-f)(x|x_n)$ can be seen as a Bregman proximal method over $f$ with movement limiter $h=u-f$. This algorithm is for instance presented in \cite[Section 3]{Eckstein1993} (and should not be confused with the ``Bregman mirror prox'' algorithm of \cite{Nemirovski2004}). Convexity of $h$ is equivalent to smoothness of $f$ relatively to $u=f+h$. If, for some $\lambda>0$, $f$ is furthermore $\lambda$-strongly convex w.r.t.\ $h$ (or equivalently to $u$, with another constant $\lambda$), then the analysis of \cite{LuFreundNesterov, AubinKorbaLeger} applies to the Bregman proximal point algorithm with regularizer $h$, by writing it as a mirror descent with regularizer $u$. This leads to a novel linear rate, complementary to the sublinear rate of \cite[Theorem 3.4]{chen1993convergence}. The link between the two algorithms was also stressed in \cite[Section 4.1]{Auslender2006} but through inexact subdifferentials rather than through a change of Bregman potential.

    With the form $\tilde \phi$, we can also see the connection with a key ingredient in the classical proof of the convergence of mirror descent \cite[Lemma 3.1]{LuFreundNesterov} and Bregman proximal points \cite[Lemma 3.2]{chen1993convergence}. Indeed, since $\yt_{n+1}=x_n$ implies $h(x_n | \yt_{n+1})=0$, the ``Bregman three-point inequality'' corresponds to the five point property \eqref{fpp} of $\tilde \phi$,
    \begin{equation*}
        f(x) -f(x_n) \geq  h(x|\yt_{n+1})-h(x|\yt_n)+ h(x_n|\yt_n)-h(x_n|\yt_{n+1}).
    \end{equation*}
\end{remark}

In order to connect $c$-concavity with smoothness, and cross-convexity with standard convexity, we can use the theory developed in Section~\ref{sec:background:mtw} since $c$ has zero cross-curvature (Example~\ref{ex:costs-with-nonnegative-curvature},\ref{ex:costs-with-nonnegative-curvature:bregman}). First let us verify that \ref{ass:cross-curv} holds. We have $-\nabla_{xy}c(x,y)=\nabla^2u(y)$. Since the Hessian of $u$ is non-singular we obtain \ref{ass:non-degenerate}. For biconvexity \ref{ass:biconvex} we first need to compute $c$-segments. Since 
\[
    \nabla_yc(x,y)=-\nabla^2u(y)(x-y),
\]
and since again $\nabla^2u(y)$ is non-singular, we conclude that a curve $(x(t),y)$ is a $c$-segment if 
\begin{equation}\label{eq:bregman-cost-c-segments}
    \frac{d^2}{dt^2}x(t)=0,
\end{equation}
i.e.\ $x(t)$ is a (constant-speed) straight line in $X$. Thus \ref{ass:biconvex} holds.

\begin{prop}[$c$-concavity is relative smoothness]\label{prop:c-conv_relsmooth}
    Let $f\colon X\to\R$ be twice differentiable and suppose that $\nabla u$ is surjective as a map from $X$ to $X^*$. Then $f$ is $c$-concave for $c(x,y)=u(x|y)$ if and only if $f$ is smooth relatively to $u$. 
\end{prop}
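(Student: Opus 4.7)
The plan is to exploit the structural fact that the Bregman cost $c(x,y) = u(x|y)$ makes the surrogate into ``$u(x)$ plus an affine function of $x$''. Indeed
\[
    c(x,y) - u(x) = -u(y) + \langle \nabla u(y), y - x \rangle
\]
is affine in $x$, so writing $\phi(x,y) \coloneqq c(x,y) + f^c(y)$ we have $\phi(x,y) = u(x) + L_y(x)$ with $L_y$ affine in $x$. Once this observation is in place, both directions follow quickly.

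With this structural observation the forward direction is immediate: if $f$ is $c$-concave then $f(x) = \inf_y \phi(x,y) = u(x) + \inf_y L_y(x)$, and since an infimum of affine functions is concave, $f - u$ is concave, i.e.\ $u - f$ is convex, which is precisely relative smoothness (\Cref{def:rel_conv}\ref{it:L_relsmooth}).

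For the converse I plan to produce, for each $\bar x \in X$, a point $\bar y \in Y$ achieving $\phi(\bar x, \bar y) = f(\bar x)$; combined with the universal bound $f \le \phi(\cdot, y)$ this yields $c$-concavity. The natural candidate is the $\bar y$ with $\nabla u(\bar y) = \nabla u(\bar x) - \nabla f(\bar x)$, which exists by the assumed surjectivity of $\nabla u$ (and is unique by strict convexity of $u$). The map $x' \mapsto f(x') - c(x',\bar y)$ then has $\bar x$ as a critical point and Hessian $\nabla^2 f - \nabla^2 u \le 0$ on all of $X$ by relative smoothness, so $\bar x$ is a global maximizer of it, giving $f^c(\bar y) = f(\bar x) - c(\bar x, \bar y)$ and hence $\phi(\bar x, \bar y) = f(\bar x)$.

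As an alternative route, one could invoke the general local criterion \Cref{thm:differential-criterion-c-concavity}: $c$ has zero cross-curvature (\Cref{ex:costs-with-nonnegative-curvature}), \ref{ass:cross-curv} is straightforward to verify here (non-singular Hessian and $X = Y$ a vector space, with horizontal $c$-segments being ordinary line segments by \eqref{eq:bregman-cost-c-segments}), and the theorem's Hessian condition reads $\nabla^2 f(\bar x) \le \nabla^2_{xx} c(\bar x, \hat y) = \nabla^2 u(\bar x)$, which is exactly relative smoothness. I do not anticipate any real obstacle; the mildly delicate step is making sure the sup defining $f^c(\bar y)$ is genuinely attained at $\bar x$ in the converse direction, which the global concavity argument settles.
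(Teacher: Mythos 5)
Your proposal is correct, and your primary argument is genuinely different from the paper's. The paper proves this proposition by invoking the general local criterion \Cref{thm:differential-criterion-c-concavity}: having verified earlier that the Bregman cost has zero cross-curvature and satisfies \ref{ass:cross-curv}, it picks $\yh$ with $\nabla u(\yh)=\nabla u(\xb)-\nabla f(\xb)$ (using surjectivity of $\nabla u$), computes $\nabla^2_{xx}c(\xb,\yh)=\nabla^2u(\xb)$, and concludes that $c$-concavity is equivalent to $\nabla^2f\le\nabla^2u$ — exactly your ``alternative route''. Your main route instead argues directly from the definitions: the observation that $\phi(x,y)-u(x)$ is affine in $x$ makes the forward direction an infimum-of-affine-functions argument, and the converse follows from global concavity of $x'\mapsto f(x')-c(x',\yb)$ together with the first-order condition at $\xb$, so that the supremum defining $f^c(\yb)$ is attained there. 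Both arguments are sound; indeed the paper explicitly remarks after \Cref{prop:b-conv_strongconv} that, owing to the simple structure of Bregman costs, \Cref{prop:c-conv_relsmooth} can be proved ``straight from the definitions'', and carries out such an alternative proof only for the cross-convexity statement. Your direct route buys self-containedness — it needs none of the cross-curvature or $c$-segment machinery — while the paper's route serves to illustrate the general local criterion on its simplest instance. One cosmetic remark: in the converse you do not need uniqueness of $\yb$ (strict convexity of $u$), only existence, which surjectivity of $\nabla u$ provides.
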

\begin{proof}
    Fix $\xb\in X$. Since $-\nabla_xc(x,y)=\nabla u(y)-\nabla u(x)$ and $\nabla u$ is surjective we can find $\yh\in X$ such that $-\nabla_xc(\xb,\yh)=-\nabla f(\xb)$. Then $\nabla^2_{xx}u(\xb,\yh)=\nabla^2u(\xb)$ and by \Cref{thm:differential-criterion-c-concavity} we have shown that $c$-concavity is equivalent to $\nabla^2f\leq \nabla^2u$. 
\end{proof}

In the case of the quadratic cost we obtain 

\begin{corollary}[$c$-concavity is smoothness]
    Take $L>0$ and $c(x,y)=\frac L2 \norm{x-y}^2$. Then $f$ is $c$-concave if and only if it is $L$-smooth.
\end{corollary}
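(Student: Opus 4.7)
The plan is to deduce this corollary directly from \Cref{prop:c-conv_relsmooth} by choosing the appropriate generator for the Bregman divergence. Specifically, I would take $u(x)=\tfrac{L}{2}\norm{x}^2$ (or equivalently $u(x)=\tfrac{L}{2}\norm{x-a}^2$ for any fixed $a\in X$, which produces the same Bregman divergence), and observe that with this choice $u(x|y)=\tfrac{L}{2}\norm{x-y}^2$ coincides with the cost $c(x,y)$.

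Next I would verify the hypotheses of \Cref{prop:c-conv_relsmooth}. The function $u$ is strictly convex, twice differentiable, and has nonsingular Hessian $\nabla^2u=LI_d$. The gradient $\nabla u(x)=Lx$ is a linear isomorphism from $X$ onto $X^*$ (identifying $X^*$ with $X$ via the chosen Euclidean structure), so in particular surjective. Thus \Cref{prop:c-conv_relsmooth} applies and yields that $f$ is $c$-concave if and only if $f$ is smooth relatively to $u$.

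Finally I would unwind what relative smoothness means for this specific $u$. By \Cref{def:rel_conv}\ref{it:L_relsmooth}, $f$ is smooth relatively to $u$ iff $u-f$ is convex, iff $\nabla^2 f\leq \nabla^2 u = LI_d$. By \Cref{def:rel_conv}\ref{it:L_smooth}, this is precisely the definition of $L$-smoothness, concluding the proof. There is essentially no technical obstacle here; the only care needed is in checking the surjectivity hypothesis of \Cref{prop:c-conv_relsmooth}, which is immediate for the quadratic generator.
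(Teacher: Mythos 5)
Your proposal is correct and is exactly the intended argument: the paper states this corollary as an immediate specialization of \Cref{prop:c-conv_relsmooth} to the generator $u(x)=\tfrac{L}{2}\norm{x-a}^2$, whose Bregman divergence is the quadratic cost and for which relative smoothness reduces to $L$-smoothness via \Cref{def:rel_conv}. Your check of the surjectivity hypothesis is the only verification needed, and it is handled correctly.
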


Let us now look at cross-convexity. 

\begin{prop}[cross-convexity is convexity]\label{prop:b-conv_strongconv}
    Let $f\colon X\to\R$ be twice differentiable and $c(x,y) = u(x|y)$. Then $f$ is $c$-cross-convex if and only if $f$ is convex.  More generally, let $\lambda> 0$. Then $f$ is $\lambda$-strongly $c$-cross-convex if and only if $f$ is $\lambda$-strongly convex relatively to $u$. 
\end{prop}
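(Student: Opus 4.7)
The plan is to reduce both sides of the claimed equivalence to a direct algebraic identity by computing the cross-difference $\delta_c$ explicitly for the Bregman cost $c(x,y)=u(x\mid y)$, and then interpreting the defining conditions on $(\bar y,\hat y)$.

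First I would compute $\delta_c$. Using $c(x,y)=u(x)-u(y)-\langle \nabla u(y),x-y\rangle$, a straightforward cancellation in
\[
\delta_c(x',y';x,y) = c(x,y')+c(x',y)-c(x,y)-c(x',y')
\]
yields the clean formula
\[
\delta_c(x',y';x,y) = \langle \nabla u(y)-\nabla u(y'),\,x-x'\rangle.
\]
This is where the Bregman structure buys us everything: all the $u(\cdot)$ terms and the bilinear terms cancel.

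Next I would decode the conditions on $(\bar y,\hat y)$. Since $\nabla_x c(x,y)=\nabla u(x)-\nabla u(y)$, the equation $\nabla_x c(\bar x,\bar y)=0$ forces $\nabla u(\bar y)=\nabla u(\bar x)$, hence $\bar y=\bar x$ by strict convexity of $u$. The equation $-\nabla_x c(\bar x,\hat y)=-\nabla f(\bar x)$ rearranges to $\nabla u(\hat y)=\nabla u(\bar x)-\nabla f(\bar x)$. Plugging these into the cross-difference formula gives
\[
\delta_c(x,\bar y;\bar x,\hat y) = \langle \nabla u(\hat y)-\nabla u(\bar y),\,\bar x-x\rangle = \langle \nabla f(\bar x),\,x-\bar x\rangle.
\]
So the cross-convexity inequality
$f(x)\ge f(\bar x)+\delta_c(x,\bar y;\bar x,\hat y)$ becomes
$f(x)\ge f(\bar x)+\langle \nabla f(\bar x),x-\bar x\rangle$, which is exactly the standard differential characterization of convexity of $f$. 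For the $\lambda$-strong version, the extra term $\lambda(c(x,\bar y)-c(\bar x,\bar y)) = \lambda\, u(x\mid \bar x)$ (using $\bar y=\bar x$ and $u(\bar x\mid \bar x)=0$) appears on the right-hand side, and the inequality becomes the gradient inequality characterizing $\lambda$-strong convexity of $f$ relative to $u$, as in Definition~\ref{def:rel_conv}\ref{it:mu_relconv}.

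To close the equivalence in both directions I need only observe that every pair $(\bar y,\hat y)$ satisfying the defining conditions is of the form described above (so standard convexity implies $c$-cross-convexity on all admissible configurations), and conversely, given any $\bar x\in X$ one can take $\bar y=\bar x$ and choose $\hat y$ via any preimage of $\nabla u(\bar x)-\nabla f(\bar x)$ under $\nabla u$ whenever it exists (so $c$-cross-convexity produces the convexity inequality at $\bar x$ toward an arbitrary $x$); note that unlike in Proposition~\ref{prop:c-conv_relsmooth} no surjectivity of $\nabla u$ is needed because cross-convexity with $\bar y=\bar x$ only uses the identity $\delta_c(x,\bar x;\bar x,\hat y)=\langle \nabla f(\bar x),x-\bar x\rangle$, which holds as soon as such a $\hat y$ is available — and if no such $\hat y$ exists the cross-convexity condition is vacuous at $\bar x$, while standard convexity of $f$ is then simply an unconditional assumption that we carry through. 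I don't foresee a real obstacle; the only delicate point is to phrase the ``for all $\hat y$'' quantifier so that the equivalence is symmetric, but this is resolved by the fact that the computed $\delta_c$ does not depend on which preimage $\hat y$ is chosen.
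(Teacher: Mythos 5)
Your computation is correct and coincides with the ``alternative proof'' that the paper itself gives immediately after its primary one: the cross-difference of a Bregman cost collapses to $\bracket{\nabla u(y')-\nabla u(y),x'-x}$, the constraints force $\yb=\xb$ and $\nabla u(\yh)=\nabla u(\xb)-\nabla f(\xb)$, and the cross-convexity inequality becomes the gradient inequality for (relative strong) convexity. The paper's primary proof takes a different route, invoking the semi-local criterion of Theorem~\ref{thm:sc-fpp+gd} (convexity along $c$-segments, which for this cost are straight lines) for the forward direction and the necessary conditions \eqref{eq:necessary-condition-cc}--\eqref{eq:necessary-condition-scc} for the converse; that route illustrates the general machinery, whereas yours is more elementary and self-contained. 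Your remark on the existence of $\yh$ is a fair point of care --- the ``cross-convex $\Rightarrow$ convex'' direction genuinely needs a preimage of $\nabla u(\xb)-\nabla f(\xb)$ under $\nabla u$ at each $\xb$, a hypothesis the paper's own proof also glosses over --- so you are, if anything, slightly more careful than the source.
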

\begin{proof} 
    Classical convexity is convexity on straight lines. Since by~\eqref{eq:bregman-cost-c-segments} $c$-segments are straight lines, \Cref{thm:sc-fpp+gd}(i) directly gives us that a convex function $f$ is $c$-cross-convex. On top of that \Cref{thm:sc-fpp+gd}(ii) implies that if $x\mapsto f(x)-\lambda u(x|y)$ is convex, which is equivalent to $f-\lambda u$ convex, then $f$ is $\lambda$-strongly $c$-cross-convex.
    
    The reverse implications follow from \eqref{eq:necessary-condition-cc} and \eqref{eq:necessary-condition-scc}. 
\end{proof}

The above characterizations of $c$-concavity and cross-convexity use the local criteria presented in Theorems~\ref{thm:differential-criterion-c-concavity} and \ref{thm:sc-fpp+gd}. However since Bregman costs (and a fortiori quadratic costs) have a relatively simple structure it is possible to prove \Cref{prop:c-conv_relsmooth} and \Cref{prop:b-conv_strongconv} directly, straight from the definitions of $c$-concavity and cross-convexity. 

\begin{proof}[Alternative proof of \Cref{prop:b-conv_strongconv}]
    For the cost $u(x|y)$ a computation shows that the cross-difference reads
    \[
        \delta_c(x',y';x,y) = \bracket{\nabla u(y')-\nabla u(y),x'-x}.
    \]
    Let $x,\xb,\yb,\yh\in X$ such that $\nabla_xc(\xb,\yb)=0$ and $-\nabla_xc(\xb,\yh)=-\nabla f(\xb)$, i.e.\ $\xb=\yb$ and $\nabla u(\yh)-\nabla u(\xb)= -\nabla f(\xb)$. Then we obtain 
    \[
        \delta_c(x,\yb;\xb,\yh) = \bracket{\nabla f(\xb), x-\xb}.
    \]
    This directly shows that $f$ is cross-convex if and only if it is convex. Besides $\lambda(c(x,\yb)-c(\xb,\yb))=\lambda u(x|\xb)$ and therefore $f$ is $\lambda$-strongly cross-convex if and only if 
    $f(x)-f(\xb)\geq \bracket{\nabla f(\xb), x-\xb} + \lambda u(x|\xb)$.    
    This is $\lambda$-strong convexity relatively to $u$.
\end{proof}

In view of \Cref{prop:c-conv_relsmooth} and \Cref{prop:b-conv_strongconv} we recover the classical convergence rates for mirror descent and gradient descent: a sublinear $1/n$ rate when $f$ is convex and smooth relatively to $u$ \cite{bauschke2017descent}, and a linear rate if in addition $f$ is $\lambda$-strongly convex relatively to $u$ \cite{LuFreundNesterov}.

\subsection{Natural gradient descent} \label{sec:examples:natural-gd}

In this section we look at natural gradient descent and establish the first global convergence rates for this method, under generic assumptions. We also provide in Remark~\ref{rem:md-ngd} an illuminating perspective on the connection between natural gradient descent and mirror descent first discovered by \cite{Raskutti2015}.

Let $X$ be a finite-dimensional vector space and $u\colon X\to\R$ be a strictly convex function that is three times differentiable and with a non-singular Hessian. Take $Y=X$ and consider the cost
\[
    c(x,y)=u(y|x),
\]
where $u(y|x)$ denotes the Bregman divergence of $u$ (Definition~\ref{def:Bregman}). Note that the order of $x$ and $y$ is reversed compared to the Bregman divergence defining mirror descent in the previous section.

Let us start by deriving the expression of \emph{gradient descent with a general cost}, in its form Algorithm~\ref{algo:gd-explicit}. We compute
\[
    -\nabla_xc(x,y)=\nabla^2u(x)(y-x).
\]
Thus given an objective function $f$, the $y$-update $-\nabla_x c(x_n, y_{n+1})=-\nabla f(x_n)$ takes the form $y_{n+1}=x_n-\nabla^2u(x_n)^{-1}\nabla f(x_n)$. As for the $x$-update $\nabla_xc(x_{n+1},y_{n+1})=0$ it says that $x_{n+1}=y_{n+1}$. The variable $y_{n+1}$ can be eliminated and we obtain the following form for Algorithm~\ref{algo:gd-explicit}:
\begin{equation}\label{eq:sec-examples-ngd-1}
    x_{n+1}-x_n=-\nabla^2u(x_n)^{-1}\nabla f(x_n).
\end{equation}
This iteration is often called natural gradient descent~\cite{Amari1998,Martens2020}, especially in information geometry, when $X$ is a space of probability measures and $u$ is the entropy.

\begin{remark}[Mirror descent and natural gradient descent]\label{rem:md-ngd}
    It has been known since \cite{Raskutti2015} that natural gradient descent can be written as mirror descent. Our framework makes this connection very natural. Indeed we say that \eqref{eq:sec-examples-ngd-1} can be written as the alternating minimization of $\phi(x,y)=u(y|x)+f^c(y)$. A standard property of Bregman divergences tells us that
    \begin{equation}\label{eq:prop-of-Bregman-div}
        u(y|x)=u^*(\nabla u(x)|\nabla u(y)),
    \end{equation}
    where $u^*$ denotes the convex conjugate of $u$. As a consequence, 
    \[
        \phi(x,y)=u^*(\nabla u(x)|\nabla u(y))+f^c(y).
    \]
    We can immediately do the change of variables $\yt=\nabla u(y)$ and adjust $\phi$ and $c$ accordingly as in Remark~\ref{rem:reparam} to obtain 
    \begin{equation}\label{eq:rem-ngd-md}
        \tilde \phi(x,\yt)=u^*(\nabla u(x)|\yt)+f^{\tilde c}(\yt).        
    \end{equation}
    In terms of alternating minimization, $\phi$ and $\tilde\phi$ will produce strictly the same iterates $x_n$. Now doing the change of variables
    $\xt=\nabla u(x)$, we see that \eqref{eq:rem-ngd-md} takes the form $u^*(\xt|\yt)+f^{\tilde c}(\yt)$. By Section~\ref{sec:examples:mirror-descent} this leads to a mirror descent in the $\xt$ variable. More precisely, defining the function $\ft$ by $f(x)=\ft(\nabla u(x))$ we obtain that \eqref{eq:sec-examples-ngd-1} can be written as the mirror descent of $\ft$ with mirror function $u^*$,
    \[
        \nabla u^*(\xt_{n+1})-\nabla u^*(\xt_n)=-\nabla \ft(\xt_n).
    \]    
    This can of course also be checked directly.
\end{remark}

Now let us look at $c$-concavity and cross-convexity. In Example~\ref{ex:costs-with-nonnegative-curvature} we show that the cost $\tilde c(x,y)=u(x|y)$ has zero cross-curvature. Since $\S_c$ is symmetric when exchanging $x$ and $y$, in the sense of \Cref{prop:facts-cc}\ref{prop:facts-cc:symmetry}, the cost $c(x,y)=u(y|x)$ also has zero cross-curvature. We can then use Theorems~\ref{thm:differential-criterion-c-concavity} and \ref{thm:sc-fpp+gd} to obtain simple characterizations of $c$-concavity and cross-convexity. We start by determining what the $c$-segments are. 

\begin{lemma}\label{lemma:ngd-c-segments}
    A path $t\mapsto (x(t),y)$ is a horizontal $c$-segment if and only if
    \begin{equation} \label{eq:ngd:c-segments}
        \frac{d^2}{dt^2}\nabla u(x(t)) = 0.
    \end{equation}
    Note in particular that this does not depend on the point $y$. 
\end{lemma}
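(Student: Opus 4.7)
The plan is to compute directly from the definition of horizontal $c$-segment applied to the cost $c(x,y) = u(y|x)$. By \Cref{def:c-segments}, a curve $t\mapsto(x(t),y)$ is a horizontal $c$-segment precisely when
\[
    \frac{d^2}{dt^2}\nabla_y c(x(t),y) = 0,
\]
so the whole task reduces to identifying $\nabla_y c(x,y)$ and differentiating twice in $t$.

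For the Bregman divergence $u(y|x) = u(y) - u(x) - \langle \nabla u(x), y-x\rangle$ viewed as a function of $y$ (with $x$ held fixed), a direct computation gives
\[
    \nabla_y c(x,y) = \nabla_y u(y|x) = \nabla u(y) - \nabla u(x).
\]
Substituting $x = x(t)$ and taking two $t$-derivatives, the term $\nabla u(y)$ is constant in $t$ and drops out, yielding
\[
    \frac{d^2}{dt^2}\nabla_y c(x(t),y) = -\frac{d^2}{dt^2}\nabla u(x(t)).
\]
Since $\nabla^2 u$ is assumed non-singular (so this expression is unambiguous as a tangent-vector equation), the vanishing of the left-hand side is equivalent to \eqref{eq:ngd:c-segments}. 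The characterization is independent of $y$, as noted in the statement, because $y$ only enters through the constant $\nabla u(y)$ which is killed by differentiation.

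I do not anticipate any serious obstacle: this is a one-line calculation once one unpacks the definition of $\nabla_y c$ for the Bregman cost with reversed arguments. The only point worth flagging is to be careful that $c(x,y) = u(y|x)$, not $u(x|y)$, so the $y$-derivative hits the $\nabla u(y)$ term rather than the $\nabla u(x)$ term, which is exactly what makes the $y$-dependence drop out and yields the clean condition on $\nabla u \circ x(t)$.
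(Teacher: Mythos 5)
Your computation is correct and coincides with the paper's own proof: both compute $\nabla_y c(x,y)=\nabla u(y)-\nabla u(x)$ for the reversed Bregman cost and observe that two $t$-derivatives kill the $y$-dependent term, reducing the $c$-segment condition to $\frac{d^2}{dt^2}\nabla u(x(t))=0$. Nothing further is needed.
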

\begin{proof}
    We have $\nabla_yc(x,y)=\nabla u(y)-\nabla u(x)$. Therefore a path $t\mapsto (x(t),y)$ is a $c$-segment if and only if $0=\frac{d^2}{dt^2}\big[\nabla u(y)-\nabla u(x(t))\big]=-\frac{d^2}{dt^2}\nabla u(x(t))$. 
\end{proof}

Since $u$ is strictly convex we can see $\nabla u$ as an injective map from the vector space $X$ to its dual space $X^*$. Let $D=(\nabla u)(X)$. \Cref{lemma:ngd-c-segments} tells us that $x(t)$ is a $c$-segment if and only if it is the image by $\nabla u^*$ of a regular segment $z(t)\in D$, where $u^*$ is the convex conjugate of $u$.

\begin{lemma}[$c$-concavity and cross-convexity]\label{lemma:ngd-local-characterizations}
    Let $f\colon X\to\R$ be twice differentiable. Define $\tilde f(z)\coloneqq f(\nabla u^*(z))$ on $D$. Then
    \begin{enumerate}[(i)]
        \item\label{lemma:ngd-local-characterizations:i} $f$ is $c$-concave if and only if for all $x,\xi$,
        \begin{equation}\label{eq:sec-ngd-c-concave}
            \nabla^2f(x)(\xi,\xi) \le \nabla^3u(x)\big(\nabla^2u(x)^{-1}\nabla f(x),\xi,\xi\big) + \nabla^2u(x)(\xi,\xi);
        \end{equation}
    
        \item\label{lemma:ngd-local-characterizations:ii} $f$ is $c$-cross-convex if and only if the function $\tilde f$ is convex. Equivalently, if for all $x,\xi$,   
        \begin{equation}\label{eq:sec-ngd-fpi}
            \nabla^2f(x)(\xi,\xi) \geq \nabla^3u(x)\big(\nabla^2u(x)^{-1}\nabla f(x),\xi,\xi\big).
        \end{equation}

        \item\label{lemma:ngd-local-characterizations:iii}  Let $\lambda>0$. Then $f$ is $\lambda$-strongly $c$-cross-convex if and only if the function $\tilde f-\lambda u^*$ is convex. Equivalently, if for all $x,\xi$,
        \begin{equation}\label{eq:sec-ngd-sfpi}
            \nabla^2f(x)(\xi,\xi) \geq \nabla^3u(x)\big(\nabla^2u(x)^{-1}\nabla f(x),\xi,\xi\big) + \lambda\nabla^2u(x)(\xi,\xi).
        \end{equation}
\end{enumerate}
\end{lemma}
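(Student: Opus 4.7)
The plan is to invoke \Cref{thm:differential-criterion-c-concavity} for part~(i), and \Cref{thm:sc-fpp+gd} combined with the necessary conditions \eqref{eq:necessary-condition-cc}--\eqref{eq:necessary-condition-scc} for parts~(ii)--(iii). All three results require nonnegative cross-curvature of $c$; for $c(x,y)=u(y|x)$ this follows because the swapped cost $\tilde c(y,x)=u(x|y)$ satisfies $\S_{\tilde c}\equiv 0$ by Example~\ref{ex:costs-with-nonnegative-curvature}\ref{ex:costs-with-nonnegative-curvature:bregman}, and the symmetry statement \Cref{prop:facts-cc}\ref{prop:facts-cc:symmetry} transfers this to $\S_c\equiv 0$.

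Differentiating $c(x,y)=u(y)-u(x)-\langle\nabla u(x),y-x\rangle$ yields $-\nabla_x c(x,y)=\nabla^2 u(x)(y-x)$ and $\nabla^2_{xx}c(x,y)(\xi,\xi)=\nabla^2 u(x)(\xi,\xi)-\nabla^3 u(x)(y-x,\xi,\xi)$. The first-order conditions $-\nabla_x c(\bar x,\hat y)=-\nabla f(\bar x)$ and $\nabla_x c(\bar x,\bar y)=0$ then force $\hat y=\bar x-\nabla^2 u(\bar x)^{-1}\nabla f(\bar x)$ and $\bar y=\bar x$, so that $\nabla^2_{xx}c(\bar x,\hat y)(\xi,\xi)=\nabla^2 u(\bar x)(\xi,\xi)+\nabla^3 u(\bar x)(\nabla^2 u(\bar x)^{-1}\nabla f(\bar x),\xi,\xi)$ while $\nabla^2_{xx}c(\bar x,\bar y)(\xi,\xi)=\nabla^2 u(\bar x)(\xi,\xi)$. \Cref{thm:differential-criterion-c-concavity} then identifies $c$-concavity with $\nabla^2 f(\bar x)\le \nabla^2_{xx}c(\bar x,\hat y)$ at this unique $\hat y$, which is precisely~\eqref{eq:sec-ngd-c-concave}, settling~(i).

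For~(ii), \Cref{lemma:ngd-c-segments} identifies horizontal $c$-segments $t\mapsto(x(t),\bar y)$ with curves $x(t)=\nabla u^*(z(t))$ where $z(t)$ is a straight line in $D=\nabla u(X)$, independently of $\bar y$; the extra requirement $\nabla_x c(x(0),\bar y)=0$ just pins $\bar y=x(0)$. Along such a segment $f(x(t))=\tilde f(z(t))$, so convexity of $t\mapsto f(x(t))$ on the $c$-segments required by \Cref{thm:sc-fpp+gd}(i) coincides with convexity of $\tilde f$ on $D$. \Cref{thm:sc-fpp+gd}(i) then delivers the forward implication, and \eqref{eq:necessary-condition-cc} the converse after substituting the $\nabla^2_{xx}c$ values computed above; both directions reduce to~\eqref{eq:sec-ngd-fpi}.

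Part~(iii) proceeds in parallel, augmented by the Bregman duality~\eqref{eq:prop-of-Bregman-div} to rewrite $c(x(t),\bar y)=u(\bar y|x(t))=u^*(z(t)|z(0))$. Along a straight line $z(t)=z(0)+t\eta$ this differs from $u^*(z(t))$ by an affine function of $t$, so its second $t$-derivative is $\nabla^2 u^*(z(t))(\eta,\eta)$; consequently convexity of $t\mapsto f(x(t))-\lambda c(x(t),\bar y)$ on the $c$-segments of~(ii) is equivalent to convexity of $\tilde f-\lambda u^*$ on $D$. Combining \Cref{thm:sc-fpp+gd}(ii) with~\eqref{eq:necessary-condition-scc}, and translating $z$-directions to $x$-directions via $\eta=\nabla^2 u(x)\xi$ (so that $\nabla^2 u^*(z)(\eta,\eta)=\nabla^2 u(x)(\xi,\xi)$), then yields~\eqref{eq:sec-ngd-sfpi}. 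The main care point throughout is ensuring that the sufficient conditions from \Cref{thm:sc-fpp+gd} and the necessary ones from \eqref{eq:necessary-condition-cc}--\eqref{eq:necessary-condition-scc} match on the nose to give equivalences, which works here precisely because $\S_c\equiv 0$.
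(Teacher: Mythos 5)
Your proposal is correct and follows essentially the same route as the paper: part~(i) via \Cref{thm:differential-criterion-c-concavity} with the computation of $\nabla^2_{xx}c(\xb,\yh)$, and parts~(ii)--(iii) via \Cref{lemma:ngd-c-segments}, \Cref{thm:sc-fpp+gd}, the Bregman duality \eqref{eq:prop-of-Bregman-div}, and the necessary conditions \eqref{eq:necessary-condition-cc}--\eqref{eq:necessary-condition-scc} for the converses. You are in fact slightly more explicit than the paper on two points worth keeping: that $u^*(z(t)|z(0))$ differs from $u^*(z(t))$ by an affine function of $t$ along the straight line $z(t)$, and the change of direction $\eta=\nabla^2u(x)\xi$ matching the $z$- and $x$-Hessian forms.
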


\begin{proof}    
\ref{lemma:ngd-local-characterizations:i}:     
    We use \Cref{thm:differential-criterion-c-concavity}. Let $\xb\in X$. Then $\yh\in Y$ satisfies $-\nabla_xc(\xb,\yh)=-\nabla f(\xb)$ if and only if $\yh=\xb-\nabla^2u(\xb)^{-1}\nabla f(\xb)$. We compute $\nabla_{xx}^2c(x,y)(\xi,\xi)=-\nabla^3u(x)(y-x,\xi,\xi)+\nabla^2u(x)(\xi,\xi)$ and thus 
    \[
        \nabla^2_{xx}c(\xb,\yh)(\xi,\xi) = \nabla^3u(\xb)\big(\nabla^2u(\xb)^{-1}\nabla f(\xb),\xi,\xi\big) + \nabla^2u(\xb)(\xi,\xi).
    \]
    By \Cref{thm:differential-criterion-c-concavity}, if $\nabla^2f(\xb)\leq \nabla^2_{xx}c(\xb,\yh)$ for all $\xb$ and corresponding $\yh$, then $f$ is $c$-concave. The converse implication holds as well by \Cref{thm:differential-criterion-c-concavity}.
    
    \ref{lemma:ngd-local-characterizations:ii}: We use \Cref{thm:sc-fpp+gd}(i). Suppose that $f$ is convex on $c$-segments $x(t)$. Here $c$-segments are given by~\eqref{eq:ngd:c-segments} and do not depend on $y$, so we do not need to be more precise on which $c$-segments we consider. Let $z(t)=\nabla u(x(t))$, i.e.\ $x(t)=\nabla u^*(z(t))$. Then convexity of $f(x(t))$ is equivalent to the convexity of $\tilde f(z(t))$ and, since $z(t)$ are straight lines, to the convexity of $\tilde f$. Under these conditions we obtain that $f$ is $c$-cross-convex. 
    
    For the differential formulation \eqref{eq:sec-ngd-fpi} and the converse statement we can use \eqref{eq:necessary-condition-cc}. If $f$ is $c$-cross-convex then for any $\xb\in X$ and for $\yh=\xb-\nabla^2u(\xb)^{-1}\nabla f(\xb)$ we have that $\nabla^2f(\xb)\geq \nabla^2_{xx}c(\xb,\yh)-\nabla^2_{xx}c(\xb,\yb)=\nabla^3u(\xb)\big(\nabla^2u(\xb)^{-1}\nabla f(\xb),\xi,\xi\big)$. 

    \ref{lemma:ngd-local-characterizations:iii}: Works similarly to (ii). By \Cref{thm:sc-fpp+gd}(ii) strong cross-convexity follow from the convexity of $b(t)\coloneqq f(x(t))-\lambda u(y|x(t))$. By a standard property of Bregman divergences we have $u(y|x(t))=u^*(\nabla u(x(t))|\nabla u(y))$. Therefore $b(t)$ can be written in the $z$ variable as 
    \begin{equation*}
        b(t)=\tilde f(z(t))-\lambda u^*(z(t)|\nabla u(y)).
    \end{equation*}
    We conclude that convexity of $\tilde f-\lambda u^*$ implies strong cross-convexity of $f$. The differential expression \eqref{eq:sec-ngd-sfpi} and the converse statement can be obtained as in (ii).

\end{proof}

Applying our convergence rates established in \Cref{thm:cv_rates} we obtain the following new rates for natural gradient descent:

\begin{theorem}[Convergence rates for natural gradient descent]\label{thm:sec-ngd-cv}    
    Let $f$ be a differentiable function and consider the iterates $x_n$ given by \eqref{eq:sec-examples-ngd-1}. We have the following rates for natural gradient descent.
    \begin{enumerate}[(i)]
        \item Suppose that $f$ satisfies \eqref{eq:sec-ngd-c-concave} and \eqref{eq:sec-ngd-fpi}. Then for all $x\in X$ and $n\geq 1$,
        \[
            f(x_n)\le f(x) + \frac{u(x_0|x)}{n}.
        \]
        \item
        Let $\lambda\in (0,1)$ and suppose that $f$ satisfies \eqref{eq:sec-ngd-c-concave} and \eqref{eq:sec-ngd-sfpi}. Then for all $x\in X$ and $n\geq 1$,
        \[
            f(x_n)\leq f(x) + \frac{\lambda u(x_0|x)}{\Lambda^n-1},
        \]
        where $\Lambda=(1-\lambda)^{-1}$.
    \end{enumerate}
\end{theorem}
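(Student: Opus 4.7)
The plan is to obtain Theorem~\ref{thm:sec-ngd-cv} as a direct corollary of Theorem~\ref{thm:cv_rates} applied to the cost $c(x,y)=u(y|x)$, using Lemma~\ref{lemma:ngd-local-characterizations} to translate the differential hypotheses into $c$-concavity and (strong) $c$-cross-convexity. Note that $c$ has zero cross-curvature: the Bregman cost $\tilde c(x,y)=u(x|y)$ is in Example~\ref{ex:costs-with-nonnegative-curvature}.\ref{ex:costs-with-nonnegative-curvature:bregman}, and the symmetry stated in Proposition~\ref{prop:facts-cc}.\ref{prop:facts-cc:symmetry} transfers this to $c(x,y)=u(y|x)$. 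Consequently the local criteria already used in Lemma~\ref{lemma:ngd-local-characterizations} apply.

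First I would verify the standing assumption \ref{ass:fc-min-min}. For \ref{ass:c-unique-min}, the function $x\mapsto u(y|x)$ is nonnegative and, by strict convexity of $u$, vanishes only at $x=y$, so its unique minimizer is $x=y$. For \ref{ass:c-fc-unique-min}, under the $c$-concavity hypothesis \eqref{eq:sec-ngd-c-concave} the first-order conditions on $y\mapsto c(x_n,y)+f^c(y)$ are uniquely solved by $y_{n+1}=x_n-\nabla^2u(x_n)^{-1}\nabla f(x_n)$, which is the natural-gradient update.

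For (i), Lemma~\ref{lemma:ngd-local-characterizations}\ref{lemma:ngd-local-characterizations:i}--\ref{lemma:ngd-local-characterizations:ii} shows that \eqref{eq:sec-ngd-c-concave} is exactly $c$-concavity of $f$ and \eqref{eq:sec-ngd-fpi} is exactly $c$-cross-convexity of $f$. Theorem~\ref{thm:cv_rates}(ii) then yields, for every $x\in X$ and $n\ge 1$,
\[
    f(x_n)\leq f(x)+\frac{c(x,y_0)-c(x_0,y_0)}{n}.
\]
It remains to identify $c(x,y_0)-c(x_0,y_0)$ with $u(x_0|x)$. The $x$-step $\nabla_xc(x_{n+1},y_{n+1})=0$ here reduces to $x_{n+1}=y_{n+1}$, so $S_\phi$ is the identity map and the canonical choice of initialization making $x_0=S_\phi(y_0)$ (the condition implicit in the proof of Theorem~\ref{thm:am-rates}(ii)) is $y_0:=x_0$. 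With this choice $c(x_0,y_0)=u(x_0|x_0)=0$ and $c(x,y_0)=u(x_0|x)$, giving the announced bound. Part (ii) proceeds identically: Lemma~\ref{lemma:ngd-local-characterizations}\ref{lemma:ngd-local-characterizations:iii} upgrades \eqref{eq:sec-ngd-sfpi} to $\lambda$-strong $c$-cross-convexity, Theorem~\ref{thm:cv_rates}(iii) supplies the linear rate, and the same substitution $y_0=x_0$ converts $\lambda(c(x,y_0)-c(x_0,y_0))$ into $\lambda u(x_0|x)$.

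The only genuinely nontrivial step is the bookkeeping around the initialization: Algorithm~\ref{algo} as stated only requires $x_0$, while the rate formula of Theorem~\ref{thm:cv_rates} carries an auxiliary $y_0$. The argument hinges on the observation that for the cost $c(x,y)=u(y|x)$ the $x$-update collapses to $x=y$, so $S_\phi=\mathrm{id}$, making $y_0=x_0$ the forced and canonical choice; every other part of the proof is a direct appeal to results already proven in the paper.
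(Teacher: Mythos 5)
Your proposal is correct and follows essentially the same route as the paper, which obtains Theorem~\ref{thm:sec-ngd-cv} directly by specializing Theorem~\ref{thm:cv_rates} via the characterizations of $c$-concavity and (strong) cross-convexity in Lemma~\ref{lemma:ngd-local-characterizations}. Your extra bookkeeping — observing that $S_\phi=\mathrm{id}$ for $c(x,y)=u(y|x)$ forces $y_0=x_0$, so that $c(x,y_0)-c(x_0,y_0)=u(x_0|x)$ — is exactly the identification the paper leaves implicit, and it is carried out correctly.
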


\textbf{Related work:} To the best of our knowledge the convergence rates presented in \Cref{thm:sec-ngd-cv} are new. The fact that Bregman divergences can induce a natural gradient descent was in some sense already present in \cite{Raskutti2015}, see also Remark~\ref{rem:md-ngd}. Sublinear convergence rates in the context of the Fisher matrix are mentioned in \cite[Section 14, eq.(18)]{Martens2020} which refers to a previous result, \cite[Theorem 2]{Hazan2007}, for the online Newton step over $\alpha$-exp-concave functions with bounded gradients. Linear rates in continuous time for a symmetric positive definite metric $M$ are discussed in \cite[Definition 3, Theorem 1]{Wensing2020} as being equivalent to $f$ being strongly geodesically convex, i.e.\ $\nabla^2 f \ge \alpha M$ for some $\alpha>0$, where $\nabla^2f$ now denotes the Riemannian Hessian. Note that natural gradient descent applies to general Riemannian metrics and not only Hessians of potentials $u$. Existence of such Hessians is shown to exist for commuting and regular parametrization of the submanifold $X$ \cite[Theorem 4.9]{li2022implicit}.

\subsection{Newton's method} \label{sec:newton}

In this section we consider the standard Newton's method~\eqref{eq:sec-newton-algo}, without step sizes, line search procedures, or further approximations. We establish new \emph{global} convergence rates, valid for any initial point $x_0$. 

Take $X$ to be a $d$-dimensional vector space and let $f\colon X\to\R$ be a three-times differentiable convex function with positive definite Hessian. Let $Y=X$ and consider the cost 
\[
    c(x,y)=f(y|x).
\]
This is a particular case of natural gradient descent (see Section~\ref{sec:examples:natural-gd}) where $u(x)=f(x)$. 
Then gradient descent with general cost reads
\begin{equation}\label{eq:sec-newton-algo}
        x_{n+1}-x_n=-\nabla^2f(x_n)^{-1}\nabla f(x_n).
\end{equation}
This is \emph{Newton's method} \cite[Chapter 5]{Nesterov2018}. Let us state our convergence rates for natural gradient descent in the case where $u=f$. We first combine the $c$-concavity and (strong) cross-convexity assumptions \eqref{eq:sec-ngd-c-concave}--\eqref{eq:sec-ngd-sfpi} as follows. Let $0 \le \lambda<1$ and consider the property: for all $x,\xi$,
\begin{equation} \label{eq:ineq-newton}
    0 \leq \nabla^3f(x)\big((\nabla^2f)^{-1}(x)\nabla f(x),\xi,\xi\big) \leq (1-\lambda)\nabla^2f(x)(\xi,\xi).
\end{equation}
Suppose in addition that $f$ is bounded below and that it attains its infimum $f_*$ at a point $x_*\in X$. 
Then \Cref{thm:sec-ngd-cv} specializes into

\begin{theorem}[Global rates for Newton's method]\label{thm:sec-Newton-cv}    
    Suppose that $f$ satisfies \eqref{eq:ineq-newton} with $\lambda=0$. Then for any initial point $x_0\in X$ and any $n\geq 1$,
    \[
        f(x_n)-f_*\leq \frac{f(x_0)-f_*}{n}.
    \]
    If in addition $f$ satisfies \eqref{eq:ineq-newton} with $\lambda>0$, then for any initial point $x_0\in X$ and $n\geq 1$,
    \[
        f(x_n)-f_*\leq\frac{\lambda [f(x_0)-f_*]}{\Lambda^n-1},
    \]
    with $\Lambda=(1-\lambda)^{-1}>1$.     
\end{theorem}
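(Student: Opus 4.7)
The plan is to recognize this theorem as a direct specialization of \Cref{thm:sec-ngd-cv} to the case $u = f$, and then evaluate the resulting Bregman divergence at the minimizer.

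First, I would check that the inequality \eqref{eq:ineq-newton} encodes exactly the hypotheses of \Cref{thm:sec-ngd-cv}. Substituting $u = f$ into the $c$-concavity condition \eqref{eq:sec-ngd-c-concave} gives
\[
    \nabla^2f(x)(\xi,\xi) \le \nabla^3f(x)\big(\nabla^2f(x)^{-1}\nabla f(x),\xi,\xi\big) + \nabla^2f(x)(\xi,\xi),
\]
which is equivalent to the left inequality in \eqref{eq:ineq-newton}. Similarly, the $\lambda$-strong cross-convexity condition \eqref{eq:sec-ngd-sfpi} specializes to
\[
    \nabla^2f(x)(\xi,\xi) \geq \nabla^3f(x)\big(\nabla^2f(x)^{-1}\nabla f(x),\xi,\xi\big) + \lambda\nabla^2f(x)(\xi,\xi),
\]
which rearranges into the right inequality in \eqref{eq:ineq-newton}. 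For $\lambda = 0$ this is just plain cross-convexity \eqref{eq:sec-ngd-fpi}. Thus \eqref{eq:ineq-newton} with $\lambda \geq 0$ is equivalent to saying that $f$ is simultaneously $c$-concave and $\lambda$-strongly $c$-cross-convex for the cost $c(x,y) = f(y|x)$.

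Next I would apply \Cref{thm:sec-ngd-cv} at $x = x_*$. The key simplification comes from $u = f$ and $\nabla f(x_*) = 0$, which yields
\[
    u(x_0|x_*) = f(x_0) - f(x_*) - \bracket{\nabla f(x_*),x_0-x_*} = f(x_0) - f_*.
\]
Plugging this into the sublinear and linear bounds of \Cref{thm:sec-ngd-cv} gives the two displayed inequalities of \Cref{thm:sec-Newton-cv}.

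There is no hard step here: once the equivalence between \eqref{eq:ineq-newton} and the pair ($c$-concavity, strong cross-convexity) is spelled out, the result is a one-line specialization. The only minor point worth verifying explicitly is that \ref{ass:fc-min-min} holds for $c(x,y) = f(y|x)$ under the assumption that $f$ has a positive definite Hessian (so that $\nabla^2_{xy} c$ is non-singular and the argmins are unique), which justifies applying \Cref{thm:sec-ngd-cv} in the first place.
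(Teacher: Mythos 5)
Your proposal is correct and is exactly the paper's argument: the paper introduces \eqref{eq:ineq-newton} precisely as the combination of \eqref{eq:sec-ngd-c-concave} and \eqref{eq:sec-ngd-sfpi} for $u=f$, and then states that \Cref{thm:sec-ngd-cv} "specializes into" \Cref{thm:sec-Newton-cv}, with the same evaluation $f(x_0|x_*)=f(x_0)-f_*$ using $\nabla f(x_*)=0$. Your explicit check of the equivalences and of \ref{ass:fc-min-min} just spells out what the paper leaves implicit.
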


The strength of \Cref{thm:sec-Newton-cv} is that it gives convergence of the values $f(x_n)$ even when $x_0$ and $x_*$ are far apart. On the other hand it doesn't at all capture \emph{quadratic convergence}, which is known to hold under certain conditions on $f$ when $x_n$ gets close enough to $x_*$. 

An important observation on the assumption \eqref{eq:ineq-newton} is that it is \emph{affine-invariant}. This means that it makes sense when $X$ is an affine space, i.e.\ a vector space whose origin has been forgotten. In a more practical manner, $f$ satisfies \eqref{eq:ineq-newton} if and only if the function $x\mapsto f(Ax+b)$ satisfies \eqref{eq:ineq-newton} (with the same constant $\lambda$), for any invertible matrix $A$ and vector $b$. Indeed \eqref{eq:ineq-newton} corresponds to cross-convexity and $c$-concavity properties. These two properties are geometric in nature since they only depend on the cost $c$. Since here the cost is given by a Bregman divergence of $f$, which is an affine-invariant quantity, we automatically see that \eqref{eq:ineq-newton} is also affine-invariant.

\begin{remark}[Comparison with self-concordance]
    Property \eqref{eq:ineq-newton} looks similar to \emph{self-concordance}. Following \cite[Chapter 5]{Nesterov2018}, $f$ is said to be self-concordant if there exists a constant $M\ge 0$ such that\begin{equation}\label{eq:self-concordant}
        |\nabla^3 f(x)(\xi,\xi,\xi)| \le 2 M \big(\nabla^2 f(x)(\xi,\xi)\big)^{\nicefrac{3}{2}},\quad \forall x,\xi \in X.
\end{equation}
However properties \eqref{eq:ineq-newton} and \eqref{eq:self-concordant} are different. For instance, when $X= \R$, $f(x)=e^x$ satisfies \eqref{eq:ineq-newton} for $\lambda=0$ but it is not self-concordant. On the other hand, on $X=(0,\infty)$, $f(x)=-\log(x)$ is self-concordant but does not satisfy \eqref{eq:ineq-newton}. Note also that \eqref{eq:ineq-newton} is invariant by the scaling $f\mapsto \alpha f$ while \eqref{eq:self-concordant} is not (the constant $M$ needs to be changed).

Concerning the local quadratic convergence of Newton's method, both \eqref{eq:ineq-newton} and \eqref{eq:self-concordant} imply, when setting $\xi_x=\nabla^2f(x)^{-1}\nabla f(x)$, that for all $x$ such that $2 M(\nabla^2f(x)^{-1}(\nabla f(x),\nabla f(x)))^{\frac{1}{2}}\le 1$, we have
\begin{equation}
  \nabla^3f(x)\big(\xi_x,\xi_x,\xi_x\big) \leq \nabla^2f(x)^{-1}(\nabla f(x),\nabla f(x)),\, \forall x \in X.\label{eq:sec-newton-decrement}
\end{equation}
 The term on the right-hand side is the Newton decrement which is a crucial quantity for the standard analysis of the convergence of Newton's method. Actually \eqref{eq:sec-newton-decrement} is all one needs to prove the result of \cite[pp.\ 347-356, Theorems 5.1.13 and 5.2.2,1.]{Nesterov2018} on the quadratic convergence of Newton's method in a neighborhood of $x_*$.
\end{remark}

\textbf{Related work:} To the best of our knowledge, the only result on global convergence of the standard Newton's method was given by Ortega and Rheinboldt in~\cite[Section 13.3.4]{Ortega2000}, under excessively restrictive monotonicity assumptions. Otherwise most approaches resort to a globalization strategy, where the convergence analysis is separated into two parts, quadratic in some local region but slow elsewhere, though with global polynomial time. Nevertheless, global convergence is known for some approximate Newton schemes. First come the relaxed or damped Newton's method, where there is in front of the Hessian term a step size $\gamma<1$. In \cite{karimireddy2019global}, $\gamma$ is chosen based on an assumption of ``stable Hessians'' (which is weaker than having both bounded Hessians and self-concordance) and prove a linear convergence in this case. For a generic stepsize $\gamma<1$, a sublinear bound on the norm of the gradient norm is shown  in \cite[Corollary 5.1]{Yuan2022} under assumptions strictly weaker than the monotonicity of \cite{Ortega2000}. Many authors resort to further approximations than just taking another constant stepsize. These include cubic regularization \cite{Nesterov2006} and its variants \cite{misch2023regularized}, both obtaining $O(\frac{1}{n^2})$ rates on the values. We refer to these articles and to \cite[Section 1.2, d)]{Yuan2022} for more references on alternative schemes and on the known non-convergence cases of Newton's method. Since many of these approximation schemes are defined through upper-bounds $\phi(x,y)$ on $f(x)$, our framework also applies to these. For instance, we recover the classical Levenberg--Marquardt approximation by considering the cost $c(x,y)=f(y|x)+\frac{\eps}{2}\|x-y\|^2$.

\subsection{Riemannian gradient descent} \label{sec:examples:riemannian}

Let $(M,\mathsf{g})$ be a geodesically convex Riemannian manifold. Take $X=Y=M$ and consider the cost $c(x,y)=\frac{L}{2}d^2(x,y)$, where $d$ is the Riemannian distance and $L>0$. We use $\mathsf{g}$ to denote the Riemmanian metric to avoid a clash of notation with the functions $g(x)$ used in Section~\ref{sec:fb}. 

On $M$ away from the cut locus, the relation 
\[
    \xi = -\nabla_xc(x,y),
\]
defines a tangent vector $\xi\in T_xM$ and is equivalently written as
\[
    y = \exp_x(\xi/L).
\] 
We refer for instance to the appendix of \cite[Chapter 12]{Villani_book_2009}. Here $\exp$ is the (Riemannian) exponential map, which is defined as follows: $\exp_x(\xi)=\gamma(1)$, where $\gamma\colon [0,1]\to M$ is the (constant-speed) geodesic starting at $x\in M$ with initial velocity $\xi\in T_xM$. As a consequence Algorithm~\ref{algo:gd-explicit} can be written as 
\begin{equation}\label{eq:riemannian-gradient-descent}
    x_{n+1}=\exp_{x_n}\Big(-\frac{1}{L}\nabla f(x_n)\Big).
\end{equation}
This is the Riemannian gradient descent method \cite{UdristeBook1994,daCruzNeto_deLima_Oliveira1998}, a natural generalization of gradient descent to Riemannian manifolds.

Let us look at the form taken by cross-convexity and $c$-concavity in the Riemannian setting. When $c(x,y)=\frac{L}{2}d^2(x,y)$, if $x,\xb,\yb,\yh$ are four points in $M$ satisfying $\nabla_xc(\xb,\yb)=0$ and $-\nabla_xc(\xb,\yh)=-\nabla f(\xb)$, then $\xb=\yb$ and we have 
\begin{equation}\label{eq:cross-diff-riem}
    \delta_c(x,\yb;\xb,\yh)=\frac{L}{2}\Big(d^2(x,\yh)-d^2(x,\xb)-d^2(\xb,\yh)\Big).
\end{equation}
Thus a function $f$ is $c$-cross-convex if
\begin{equation} \label{eq:riem-csc}
    f(x)\geq f(\xb) + \frac{L}{2}\Big(d^2(x,\yh)-d^2(x,\xb)-d^2(\xb,\yh)\Big),
\end{equation}
for all $x,\xb,\yh$ as above. 
On the other hand, on a Riemannian manifold there is already a natural notion of convexity: \emph{geodesic convexity}. A function $f$ is geodesically convex if $t\mapsto f(\gamma(t))$ is convex for any constant-speed geodesic $\gamma$. This is equivalent to 
\begin{equation}\label{eq:riem:geodesic-conv}
    \nabla^2f\geq 0,
\end{equation}
where $\nabla^2f$ denotes the Riemannian Hessian of $f$, and in particular we see that \eqref{eq:riem:geodesic-conv} is a local characterization. The next result relates cross-convexity, cross-concavity and geodesic convexity and is proven in Appendix~\ref{app:sec-ex}.

\begin{prop}\label{prop:compare-cc-gc}
    Let $c(x,y)=\frac{L}{2}d^2(x,y)$. Suppose that $(M,\mathsf{g})$ has nonnegative sectional curvature. Then
    \begin{enumerate}[(i)]
        \item $f$ geodesically convex $\implies$ $f$ $c$-cross-convex.
        \item $-g$ $c$-cross-concave $\implies$ $g$ geodesically convex.
    \end{enumerate}
    Suppose that $(M,\mathsf{g})$ has nonpositive sectional curvature. Then
    \begin{enumerate}[(i),resume]
        \item $f$ $c$-cross-convex $\implies$ $f$ geodesically convex.
        \item $g$ geodesically convex $\implies$  $-g$ $c$-cross-concave.
    \end{enumerate}
\end{prop}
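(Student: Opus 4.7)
The plan is to unpack all four implications simultaneously by translating cross-convexity (resp.\ cross-concavity) into a gradient-based first-order inequality via the Riemannian law of cosines, and then comparing with the usual first-order characterization of geodesic convexity, namely
\[
    f(x)\ge f(\xb) + \langle \nabla f(\xb), \exp_{\xb}^{-1}(x)\rangle \quad \text{for all }x,\xb\in M.
\]

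First I would unpack the first-order conditions in the definitions of cross-convexity and cross-concavity for the cost $c(x,y)=\frac{L}{2}d^2(x,y)$. Using $\nabla_x \tfrac12 d^2(x,y)=-\exp_x^{-1}(y)$ away from the cut locus, the condition $\nabla_xc(\xb,\yb)=0$ forces $\yb=\xb$, while $-\nabla_xc(\xb,\yh)=-\nabla f(\xb)$ (resp.\ $\nabla g(\xb)$) forces $\exp_{\xb}^{-1}(\yh)=-\tfrac1L\nabla f(\xb)$ (resp.\ $+\tfrac1L\nabla g(\xb)$). Plugging $\yb=\xb$ into the cross-difference gives the clean expression
\[
    \delta_c(x,\yb;\xb,\yh)=\tfrac{L}{2}\bigl(d^2(x,\yh)-d^2(x,\xb)-d^2(\xb,\yh)\bigr),
\]
matching \eqref{eq:cross-diff-riem}.

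Second, I would invoke the Toponogov comparison theorem on the geodesic triangle with vertices $\xb,x,\yh$ and apex at $\xb$: for $\sec\ge 0$,
\[
    d^2(x,\yh)\le d^2(x,\xb)+d^2(\xb,\yh)-2\langle \exp_{\xb}^{-1}(x),\exp_{\xb}^{-1}(\yh)\rangle,
\]
with the reverse inequality for $\sec\le 0$. Substituting $\exp_{\xb}^{-1}(\yh)=\mp\frac{1}{L}\nabla(\pm f)(\xb)$ and multiplying by $L/2$ translates this into
\[
    \delta_c(x,\yb;\xb,\yh)\ \lessgtr\ \langle \nabla f(\xb),\exp_{\xb}^{-1}(x)\rangle \quad\text{(resp.\ }-\langle \nabla g(\xb),\exp_{\xb}^{-1}(x)\rangle\text{)},
\]
with $\le$ under $\sec\ge 0$ and $\ge$ under $\sec\le 0$. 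This is the pivotal comparison: the cross-difference sandwiches or is sandwiched by the linear-in-gradient term depending on the sign of the curvature.

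Third, I would chain these inequalities with the first-order characterization of geodesic convexity, one case at a time. For instance, for (i) (nonneg.\ curvature, $f$ geodesically convex), geodesic convexity yields $f(x)\ge f(\xb)+\langle \nabla f(\xb),\exp_{\xb}^{-1}(x)\rangle$ and Toponogov bounds $\delta_c$ above by this inner product, giving $c$-cross-convexity. For (iii) (nonpos.\ curvature, $f$ $c$-cross-convex), Toponogov now bounds $\delta_c$ below by the inner product, so cross-convexity $f(x)\ge f(\xb)+\delta_c$ implies the geodesic convexity inequality, hence $f$ is geodesically convex (varying $\xb$ freely and using differentiability). Items (ii) and (iv) for $-g$ $c$-cross-concave go through the same way, with the cross-concavity inequality $g(x)\ge g(\xb)-\delta_c$ and the sign flip $\exp_{\xb}^{-1}(\yh)=+\tfrac1L\nabla g(\xb)$ lining up the inner-product terms correctly.

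The main subtleties are bookkeeping: tracking signs under $f \leftrightarrow -g$ and under the flip between $\le$ and $\ge$ in Toponogov, and checking that one stays inside a totally normal neighborhood so that $\exp_{\xb}^{-1}(\yh)$ is well-defined and the law of cosines applies, which is reasonable under the geodesic convexity assumption on $M$. No deep new ingredient is needed beyond Toponogov and the first-order characterization of geodesic convexity; the content of the proposition is really the identification of these two Riemannian-geometric inequalities with the optimization-theoretic cross-convexity/cross-concavity.
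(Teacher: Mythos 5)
Your argument is correct and is essentially the paper's own proof: both reduce everything to the identity \eqref{eq:cross-diff-riem} for the cross-difference, compare $d^2(\exp_{\xb}(\xi),\exp_{\xb}(\eta))$ with $\abs{\xi-\eta}^2$ via the comparison theorems, and then chain with the first-order characterization of geodesic convexity. The only cosmetic difference is that the paper attributes the nonpositive-curvature inequality to Rauch rather than Toponogov.
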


We now turn our attention to $c$-concavity. To relate $c$-concavity to notions of smoothness that are used for \eqref{eq:riemannian-gradient-descent} in the literature, we consider the following properties:

\begin{enumerate}[label=\normalfont(\Alph*)]
    \item $f$ is $c$-concave; \label{riem:c-concave}
    \item $f$ has $L$-Lipschitz gradients; \label{riem:Lip-gradient}
    \item $\nabla^2f \leq L \mathsf{g}$; \label{riem:bound-Hessian}
    \item $f(x)\leq f(\xb)+\bracket{\nabla f(\xb),\xi}+\frac{L}{2} d^2(x,\xb)$, where $x=\exp_\xb(\xi)$. \label{riem:f-bound-above}
\end{enumerate}

Here \ref{riem:Lip-gradient} means the following~\cite{daCruzNeto_deLima_Oliveira1998}: let $\gamma$ be a geodesic in $M$ and let $\theta_{0\to 1}$ denote the parallel transport from $T_{\gamma(0)}M$ to $T_{\gamma(1)}M$ along $\gamma$. Then we say that $f$ has $L$-Lipschitz gradient if 
\[
    \abs{\nabla f(\gamma(1)) - \theta_{0\to 1}\nabla f(\gamma(0))} \leq L d(\gamma_1,\gamma_0).
\]
We then have the following result, proven in Appendix~\ref{app:sec-ex}.

\begin{prop}\label{prop:c-concave-equiv}
    The following statements hold.
    \begin{enumerate}[(i)]
        \item \ref{riem:bound-Hessian}$\iff$\ref{riem:f-bound-above}
        \item Suppose that $(M,\mathsf{g})$ has nonnegative curvature. Then \ref{riem:c-concave}$\implies$\ref{riem:bound-Hessian}.       
        \item Suppose that $(M,\mathsf{g})$ has nonpositive curvature. Then \ref{riem:bound-Hessian}$\implies$\ref{riem:c-concave}.
        \item \ref{riem:Lip-gradient}$\implies$\ref{riem:bound-Hessian}
    \end{enumerate}
\end{prop}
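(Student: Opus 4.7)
The four parts split into a purely analytic group (i), (iv) and a geometric group (ii), (iii). Parts (i) and (iv) require no curvature hypothesis and reduce to one-dimensional computations along a constant-speed geodesic. Parts (ii) and (iii) rely on the classical Hessian comparison for $d^2(\cdot,y)/2$, combined with the local criterion for $c$-concavity in \Cref{thm:differential-criterion-c-concavity}.

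For (i), I fix $\xb \in M$ and $\xi \in T_{\xb} M$, set $\gamma(t) = \exp_{\xb}(t\xi)$, and study $\phi(t) = f(\gamma(t))$. Since $\dot\gamma$ is parallel along $\gamma$, one has $\phi''(t) = \nabla^2 f(\gamma(t))(\dot\gamma(t), \dot\gamma(t))$. Assuming (C) gives $\phi''(t) \le L|\xi|^2$, so integrating twice over $[0,1]$ together with $d(x, \xb) = |\xi|$ produces (D) at $x = \gamma(1)$. Conversely, feeding $x = \gamma(t)$ into (D) for small $t > 0$ and matching with the second-order Taylor expansion of $\phi$ at $0$ forces $\phi''(0) \le L|\xi|^2$, which is (C). For (iv), on the same $\gamma$ the identity $\langle \theta_{0\to t}\nabla f(\xb), \dot\gamma(t)\rangle = \langle \nabla f(\xb), \xi\rangle$ (since both $\theta_{0\to t}\nabla f(\xb)$ and $\dot\gamma$ are parallel) gives
\[
    \langle \nabla f(\gamma(t)), \dot\gamma(t)\rangle - \langle \nabla f(\xb), \xi\rangle = \langle \nabla f(\gamma(t)) - \theta_{0\to t}\nabla f(\xb), \dot\gamma(t)\rangle.
\]
The right-hand side is bounded by $L t|\xi|^2$ using (B) and Cauchy--Schwarz. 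Differentiating the left-hand side at $t = 0$ yields $\nabla^2 f(\xb)(\xi, \xi) \le L|\xi|^2$, which is (C).

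For (ii), I invoke the necessary half of \Cref{thm:differential-criterion-c-concavity}, which does not require any cross-curvature hypothesis: if $f$ is $c$-concave and $(\xb, \yh)$ satisfies $-\nabla_x c(\xb, \yh) = -\nabla f(\xb)$, then automatically $\nabla^2 f(\xb) \le \nabla^2_{xx} c(\xb, \yh)$. Combined with the upper Hessian comparison $\nabla^2_{xx}\tfrac{L}{2} d^2(\xb, \yh) \le L\,\mathsf{g}_{\xb}$, valid on manifolds of nonnegative sectional curvature, this yields (C). For (iii), I define $\yh \coloneqq \exp_{\xb}(\nabla f(\xb)/L)$, which is globally well-defined in the Cartan--Hadamard setting of simply connected nonpositive curvature, and study the auxiliary function $\varphi(x) \coloneqq c(x, \yh) - f(x)$. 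By construction $\nabla \varphi(\xb) = 0$, and from the dual Hessian comparison $\nabla^2_{xx} c(\cdot, \yh) \ge L\,\mathsf{g}$ in nonpositive curvature together with (C), one obtains $\nabla^2 \varphi \ge 0$. Therefore $\varphi$ is geodesically convex, $\xb$ is its global minimum, and $f(x) \le c(x, \yh) + (f(\xb) - c(\xb, \yh))$ for all $x$, with equality at $\xb$. The additive constant then coincides with $f^c(\yh)$, so $f(\xb) = \inf_{y} c(\xb, y) + f^c(y)$, and since $\xb$ was arbitrary, $f$ is $c$-concave.

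The main obstacle is the Hessian comparison for the squared Riemannian distance, which in one direction requires nonnegative and in the other nonpositive sectional curvature. This is a classical consequence of the second variation formula and Rauch's theorem applied to Jacobi fields along the minimizing geodesic from $x$ to $y$. Care is needed near the cut locus, but this is harmless here: in (iii) the Cartan--Hadamard structure guarantees an empty cut locus, while in (ii) the bound on $\nabla^2_{xx} c(\xb, \yh)$ is only invoked at points where $c$ is smooth.
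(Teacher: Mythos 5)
Your proof is correct, but for parts (ii) and (iii) it takes a genuinely different route from the paper. The paper first uses (i) to rewrite \ref{riem:bound-Hessian} in the integrated form $f(x)-f(\xb)\le\frac{L}{2}\bigl(\abs{\xi-\eta}^2-\abs{\eta}^2\bigr)$ with $\eta=-\frac1L\nabla f(\xb)$, writes $c$-concavity as $f(x)-f(\xb)\le\frac{L}{2}\bigl(d^2(x,\yh)-\abs{\eta}^2\bigr)$, and then compares $d^2(\exp_\xb\xi,\exp_\xb\eta)$ with $\abs{\xi-\eta}^2$ via Toponogov (nonnegative curvature) and Rauch (nonpositive curvature) -- the same device as in its proof of \Cref{prop:compare-cc-gc}. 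You instead argue at the level of Hessians: for (ii) you combine the necessary half of \Cref{thm:differential-criterion-c-concavity} with the comparison $\nabla^2_{xx}\tfrac{L}{2}d^2(\cdot,\yh)\le L\,\mathsf{g}$, and for (iii) you build the majorant by hand, showing $\varphi=c(\cdot,\yh)-f$ is geodesically convex with a critical point at $\xb$. Your route is more self-contained for (iii) (it never invokes the cross-curvature machinery, which is just as well since $d^2/2$ on a negatively curved manifold does \emph{not} have nonnegative cross-curvature), while the paper's route keeps (ii) and (iii) perfectly symmetric and avoids Hessian comparison near the cut locus. For (ii), note that the converse half of \Cref{thm:differential-criterion-c-concavity} implicitly requires that the infimum defining $c$-concavity at $\xb$ be attained, so that the attaining point is forced to be the unique $\yh$ with $-\nabla_xc(\xb,\yh)=-\nabla f(\xb)$; this is harmless here but worth a sentence. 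Finally, there is a sign slip in (iii): the point tangent to $f$ at $\xb$ is $\yh=\exp_{\xb}(-\nabla f(\xb)/L)$, not $\exp_{\xb}(+\nabla f(\xb)/L)$; with your choice $\nabla\varphi(\xb)=-2\nabla f(\xb)\ne0$ in general. Parts (i) and (iv) are fine and essentially reprove the references the paper cites (\cite[Prop.~16.2]{Villani_book_2009} and \cite[Lemma 2.1]{MR3634803}).
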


We note that \ref{riem:bound-Hessian} is a local property thus desirable in practice. It can be stronger or weaker than $c$-concavity, as shown in (ii)-(iii). Our convergence rates read as follows.

\begin{prop}[Convergence rates for Riemannian gradient descent]
    Suppose that $f$ and $c$ are differentiable and satisfy \ref{ass:fc-min-min}. Suppose that $f$ is bounded below and that it attains its infimum $f_*$ at a point $x_*$. Consider the iterates $x_n$ defined by \eqref{eq:riemannian-gradient-descent}. Then the following statements hold.
    \begin{enumerate}[(i)]
        \item Suppose that $f$ is $c$-concave. We then have the descent property
    \[
        f(x_{n+1})\le f(x_n)-\frac{1}{2L}\abs{\nabla f(x_n)}^2,
    \]    
   which implies the stopping criterion 
   \[
    \min_{0\le k\le n-1} \frac{1}{2L} \abs{\nabla f(x_k)}^2\leq \frac{f(x_0)-f_*}{n}.
   \]
   
    \item Suppose in addition that $f$ is $c$-cross-convex. Then 
    for any $x\in M, n\geq 1$,
    \begin{equation} \label{eq:riem:sublinear-rate}
         f(x_n)\le f(x) + \frac{L\,d^2(x,x_0)}{2n}.
    \end{equation}
    
    \item Suppose in addition that $f$ is $\lambda$-strongly $c$-cross-convex for some $\lambda\in(0,1)$. Then for any $x\in X, n\geq 1$,
    \begin{equation} \label{eq:riem:linear-rate}
         f(x_n)\le f(x) + \frac{\lambda L \,d^2(x,x_0)}{2(\Lambda^n-1)},
    \end{equation}
    where $\Lambda\coloneqq(1-\lambda)^{-1}>1$.
    \end{enumerate}
\end{prop}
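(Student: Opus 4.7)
The approach will be to recognize this proposition as a direct specialization of \Cref{thm:cv_rates} to the cost $c(x,y) = \frac{L}{2}d^2(x,y)$. Since Algorithm~\ref{algo} applied to this cost coincides with the Riemannian gradient descent iteration \eqref{eq:riemannian-gradient-descent}, and the hypotheses of parts (i)--(iii) line up one-for-one with those of \Cref{thm:cv_rates}, there is no new convergence argument to construct---only a translation of the general bounds into explicit Riemannian quantities. First I would compute the iterates in closed form. Using the standard identity $\nabla_x\tfrac{L}{2}d^2(x,y) = -L\log_x(y)$, valid away from the cut locus (where $\log_x$ is the inverse of $\exp_x$), the $y$-step \eqref{algo:step1_grad} yields $y_{n+1} = \exp_{x_n}(-\nabla f(x_n)/L)$, while the $x$-step \eqref{algo:step2_grad} collapses to $x_{n+1}=y_{n+1}$. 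Consequently $c(x_{n+1},y_{n+1}) = 0$, $d(x_n,y_{n+1}) = |\nabla f(x_n)|/L$, and $c(x_n,y_{n+1}) = \tfrac{1}{2L}|\nabla f(x_n)|^2$.

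Part (i) then follows at once from \Cref{thm:cv_rates}(i): $f(x_{n+1}) \le f(x_n) - [c(x_n,y_{n+1}) - c(x_{n+1},y_{n+1})] = f(x_n) - \tfrac{1}{2L}|\nabla f(x_n)|^2$. Telescoping from $0$ to $n-1$ and using $f(x_n)\ge f_*$ gives $\sum_{k=0}^{n-1}\tfrac{1}{2L}|\nabla f(x_k)|^2 \le f(x_0)-f_*$; dividing by $n$ delivers the stopping criterion, since the minimum of nonnegative quantities is bounded by their mean.

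For parts (ii) and (iii), the only subtlety lies in the auxiliary point $y_0$ appearing in the bounds of \Cref{thm:cv_rates}, which is implicit in Algorithm~\ref{algo} since the algorithm is initialized with $x_0$ alone. Inspecting the five-point property, $y_0$ must satisfy $x_0 = S_\phi(y_0)$; in the Riemannian case $S_\phi(y) = \argmin_x c(x,y) = y$, which forces $y_0 = x_0$. With this choice, $c(x_0,y_0) = 0$ and $c(x,y_0) = \tfrac{L}{2}d^2(x,x_0)$, and substituting into \Cref{thm:cv_rates}(ii)--(iii) produces \eqref{eq:riem:sublinear-rate} and \eqref{eq:riem:linear-rate}. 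I do not anticipate any real obstacle: the geometric content (namely $c$-concavity and cross-convexity of $f$) is packaged into the hypotheses and funneled through the general theorem; the whole proof amounts to bookkeeping.
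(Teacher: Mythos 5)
Your proposal is correct and is exactly the route the paper takes: the proposition is a direct specialization of \Cref{thm:cv_rates} to the cost $c(x,y)=\tfrac{L}{2}d^2(x,y)$, with $x_{n+1}=y_{n+1}$, $c(x_n,y_{n+1})=\tfrac{1}{2L}\abs{\nabla f(x_n)}^2$, and $y_0=x_0$ (forced by $x_0=S_\phi(y_0)$) giving $c(x,y_0)-c(x_0,y_0)=\tfrac{L}{2}d^2(x,x_0)$. Your handling of the implicit initialization point $y_0$ is the right resolution of the only nontrivial bookkeeping step.
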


\paragraph{Related work.} There exists a vast literature on optimization on manifolds, and on adapting Euclidean techniques to work in the Riemannian setting. We refer to the textbooks \cite{UdristeBook1994,MR2364186} and the recent \cite{MR4533407} for an introduction to the subject. The Riemannian gradient descent method is natural in this context and early works include \cite{MR362899,MR663521}. 

A notable work is \cite{daCruzNeto_deLima_Oliveira1998}, which introduces the manifold version of $L$-Lipschitz gradients, i.e.\ property~\ref{riem:Lip-gradient}, and champions the Toponogov comparison theorem to use non-Euclidean trigonometry. Moreover, although it is not stated in this way, \cite{daCruzNeto_deLima_Oliveira1998} contains a sublinear $1/n$ convergence rate: under geodesic convexity, $L$-Lipschitz gradient and nonnegative curvature their theorems 5.1 and 5.2 combine into \eqref{eq:riem:sublinear-rate}. \cite{MR3634803} obtains the same rate and also studies the subgradient method. The linear rate \eqref{eq:riem:linear-rate} appears even earlier in \cite[Chapter 7]{UdristeBook1994}. \cite{ZhangSraManifold2016} studies rates of Riemannian gradient descent in a nonpositive curvature setting. In particular when curvature is bounded below by $-k^2<0$, and in the convex and $L$-Lipschitz gradient case they obtain a $1/n$ sublinear rate.

\subsection{POCS (Projection Onto Convex Sets)} \label{sec:pocs}

Let $(H,\norm{\cdot})$ be a Euclidean space and let $B,C$ be two closed convex subsets of $H$. The POCS algorithm \cite{Bauschke_PLC_book} searches for the intersection of $B$ and $C$ by successive projections onto $B$ and $C$: given $x_n\in B$, compute
\begin{equation} \label{eq:pocs}
    \begin{aligned}
        y_{n+1} &= \argmin_{y\in C} \norm{x_n-y},\\
        x_{n+1} &= \argmin_{x\in B} \norm{x-y_{n+1}}.
    \end{aligned}    
\end{equation}
Thus POCS is an alternating minimization method. There are at least two ways to formulate \eqref{eq:pocs} as Algorithm~\ref{algo:am}. (i) Take $X=Y=H$, with the cost $c(x,y)=\frac12\norm{x-y}^2$ and the indicator functions $g=\iota_B$ and $h=\iota_C$. Then define $\phi(x,y)=c(x,y)+g(x)+h(y)$. Note that $g$ and $h$ take the value $\infty$ and therefore don't fit strictly speaking into the theory developed in Section~\ref{sec:am}, but let us ignore this for now. (ii) Take $X=B$, $Y=C$ and consider the function $\phi(x,y)=\frac12\norm{x-y}^2$. We follow approach (ii) which provides a better opportunity to discuss some assumptions and current limitations of our theory. 

Since the quadratic cost has zero cross-curvature, by \Cref{thm:sc-fpp-am} the five-point property can be obtained from local considerations. First let us take a closer look at \ref{ass:cross-curv}. Non-degeneracy \ref{ass:non-degenerate} holds only if $B$ and $C$ have the ``same dimension'', for instance if they both have non-empty interior, or if they are both $k$-dimensional affine subspaces of $H$. This restriction is not so natural in the context of \eqref{eq:pocs}. It is a result of cross-curvature not being well-understood in the ``unequal dimensions'' setting in optimal transport, a more difficult problem for regularity than when $X$ and $Y$ have the same dimension~\cite{ChiapporiMcCannPass2017,McCannPass_unequal}. Note that this dimension restriction is absent in formulation (i) in the previous paragraph.

In Section~\ref{sec:examples:mirror-descent} we showed that for the quadratic cost, $c$-segments are just regular segments. Assumption \ref{ass:biconvex} is then equivalent to the convexity of $B$ and $C$. In view of applying \Cref{thm:sc-fpp-am} let us now focus on the marginal function $F(x)=\inf_{y\in Y}\phi(x,y)$. Here it takes the form 
\[
    F(x)=\min_{y\in C}\frac12\norm{x-y}^2=\frac12d_C^2(x).
\]
Since $C$ is a convex set, $F$ is a convex function~\cite{Bauschke_PLC_book}, and therefore convex on segments, which are here the $c$-segments. 
By \Cref{thm:sc-fpp-am} we deduce that $\phi$ has the five-point property \eqref{fpp}. We may now apply \Cref{thm:am-rates} and obtain the following convergence rate.

\begin{prop}[Convergence rate for POCS]
    For any $x\in B$ and $n\geq 1$,
    \[
        d_C^2(x_n) - d_C(x)^2 \le \frac{\norm{x-x_0}^2}{n},
    \]
    In particular, if $B\cap C\neq\emptyset$ then 
    \begin{equation}\label{eq:rate-pocs}
        d_C^2(x_n)\le \frac{\norm{x-x_0}^2}{n}.
    \end{equation}
    
\end{prop}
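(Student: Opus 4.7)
The plan is to apply \Cref{thm:am-rates}(ii) to the surrogate $\phi(x,y)=\tfrac{1}{2}\norh{x-y}^2$ on $B\times C$, initializing the analysis with the formal reference point $y_0:=x_0$. Specializing the resulting bound will then deliver the announced rate.

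First I would verify the five-point property via \Cref{thm:sc-fpp-am}: the quadratic cost has vanishing cross-curvature, its $c$-segments are ordinary line segments, and the marginal $F(x)=\inf_{y\in C}\phi(x,y)=\tfrac{1}{2}d_C^2(x)$ is globally convex on $H$ because $C$ is convex. In particular $F$ is convex along every $c$-segment, which is exactly what \Cref{thm:sc-fpp-am}(i) needs.

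Next I would telescope \eqref{fpp} as in the proof of \Cref{thm:am-rates}(ii), but initialized with $y_0:=x_0$. This choice is consistent with \eqref{eq:pocs} since $P_B(x_0)=x_0$, so $S_\phi(y_0)=x_0$ and the subsequent iterates $y_1=P_C(x_0)$, $x_1=P_B(y_1)$, etc.\ coincide with those of \eqref{eq:pocs}. With $\phi(x_0,y_0)=0$ and $\phi(x,y_0)=\tfrac{1}{2}\norh{x-x_0}^2$, the telescoping yields
\[
    \phi(x_n,y_n)\le \phi(x,y)+\frac{\norh{x-x_0}^2}{2n}
\]
for any $x\in B$ and $y\in C$. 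Setting $y=P_C(x)$, so that $\phi(x,y)=F(x)=\tfrac{1}{2}d_C^2(x)$, and bounding $\phi(x_n,y_n)\ge F(x_n)=\tfrac{1}{2}d_C^2(x_n)$, then gives the first stated inequality. The second follows by evaluating at any $x\in B\cap C$, which makes $d_C(x)=0$.

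The main obstacle will be that the abstract setting of Section~\ref{sec:am} prescribes $y_0\in Y=C$, whereas $x_0\in B$ is generically outside $C$. However, $y_0$ enters the telescoping only through the single ``initial'' inequality
\[
    \phi(x,y_1)+\phi(x_0,y_0)\le \phi(x,y)+\phi(x,y_0)\quad\text{with } y_0:=x_0,
\]
which unrolls to $\norh{x-P_C(x_0)}^2\le \norh{x-y}^2+\norh{x-x_0}^2$ for all $y\in C$. This extended five-point inequality can be secured either by noting that the hypothesis of \Cref{thm:sc-fpp-am} uses only convexity of $F$ along the straight $c$-segment joining $x_0$ to $x$, which holds regardless of where $y_0$ is placed because $F$ is convex on $H$; or by a direct check using the obtuse-angle property of $P_C$ together with its non-expansivity, applied to the decomposition $x-P_C(x_0)=(x-P_C(x))+(P_C(x)-P_C(x_0))$. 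Either route legitimizes the formal choice $y_0=x_0$ and completes the plan.
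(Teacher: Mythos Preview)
Your plan is correct and follows the paper's own route: verify the five-point property for $\phi(x,y)=\tfrac12\norh{x-y}^2$ on $B\times C$ via \Cref{thm:sc-fpp-am} (using convexity of $F=\tfrac12 d_C^2$), then read off the rate from \Cref{thm:am-rates}(ii). The paper simply states ``we may now apply \Cref{thm:am-rates}'' without spelling out the numerator, so the substance of your argument matches.

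Where you go beyond the paper is in facing the initialization issue head-on. The bound in \Cref{thm:am-rates}(ii) involves $\phi(x,y_0)-\phi(x_0,y_0)$, and the stated numerator $\norh{x-x_0}^2$ forces $y_0=x_0$, which generically lies outside $Y=C$. The paper glosses over this; you correctly observe that only the $n=0$ instance of \eqref{fpp} is affected, and that this extended inequality reduces (after passing to the form \eqref{eq:fpp-F}) to
\[
F(x)\ge F(x_0)+\langle x_0-P_C(x_0),\,x-x_0\rangle,
\]
which is exactly the convexity inequality for $F=\tfrac12 d_C^2$ at $x_0$, since $\nabla F(x_0)=x_0-P_C(x_0)$. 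So your first route is clean and fully rigorous. Your second route (direct check via projection inequalities) also goes through: with $p=P_C(x)$, $p_0=P_C(x_0)$, the obtuse-angle property at $x_0$ gives $\langle x_0-p_0,\,p-p_0\rangle\le 0$, and combining this with the decomposition $x-x_0=(x-p)-(x_0-p_0)+(p-p_0)$ yields $\norh{x-p_0}^2\le d_C^2(x)+\norh{x-x_0}^2$, which is the needed initial five-point inequality with $y=P_C(x)$.
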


Furthermore $x_n$ converges in norm to a point in $B\cap C$, if it is nonempty, as a consequence of \cite[Theorem 3.3, iv)]{Bauschke1993}.

We conclude this section by pointing out that POCS can be formulated as a projected gradient descent,
\begin{equation}\label{eq:pocs-as-gd}
    x_{n+1}=P_B(x_n-\nabla F(x_n)),
\end{equation}
where $P_B$ denotes the projection onto the convex set $B$. The form \eqref{eq:pocs-as-gd} can be understood by Remark~\ref{rem:connections-am-fb-gd} in Section~\ref{sec:fb}. Let us adopt the perspective (i) mentioned at the beginning of this example with $X=Y=H$, $c(x,y)=\frac12\norm{x-y}^2$ and $\phi(x,y)=c(x,y)+\iota_B(x)+\iota_C(y)$. Since the cost is quadratic, the alternating minimization of $\phi$ yields a standard Euclidean forward--backward iteration alternating an explicit gradient step on $F$, $y_{n+1}-x_n=-\nabla F(x_n)$ and a proximal step $x_{n+1}=\argmin_{x\in B}\norm{x-y_{n+1}}^2$, which is here just a projection onto $B$.

\paragraph{Related work.} The sublinear rate \eqref{eq:rate-pocs} is hard to find in the literature.
Linear rates of convergence for POCS occur for (boundedly) linear regular sets as per \cite[Theorems 3.12, 6.3]{Bauschke1993}, a condition satisfied for instance if the interior of $\{x-y \,|\, x\in B,\, y\in C\}$ is nonempty (see also \cite[Section 4]{Deutsch2008} where it is shown that linear regularity is necessary and sufficient for linear rates).

\subsection{Bregman alternating  minimization} \label{sec:alt-bregman-prox}

Let $X=Y$ be a Euclidean space and $u,g,h\colon X\to\R$ three convex functions. Consider the minimization problem
\begin{equation}\label{eq:alt-bregman-minmin}
    \min_{x,y\in X} u(x|y) + g(x) + h(y).
\end{equation}
Here $u(x|y)$ denotes the Bregman divergence of $u$ (see Definition~\ref{def:Bregman}). Let $\phi(x,y)=u(x|y)+g(x)+h(y)$ and consider the alternating minimization of $\phi$,
\begin{align}
        y_{n+1}&=\argmin_{y\in Y}u(x_n|y)+h(y),\label{eq:alternating-bregman-prox-1}    \\
        x_{n+1}&= \argmin_{x\in X} u(x|y_{n+1})+g(x).  \label{eq:alternating-bregman-prox-2}      
\end{align}
Algorithm~\eqref{eq:alternating-bregman-prox-1}--\eqref{eq:alternating-bregman-prox-2} is a natural generalization of POCS in the previous example, where we had $u(x|y)=\frac12\norm{x-y}^2$ and $g,h$ indicator functions of convex sets. It also prepares the subsequent two examples, Sinkhorn and EM, for which $u(x|y)$ is the Kullback--Leibler divergence.

In this section we make two observations on~\eqref{eq:alternating-bregman-prox-1}--\eqref{eq:alternating-bregman-prox-2}. The first one is that it can be formulated as a ``forward--backward mirror descent'' and that convexity of the marginal function $F(x)=\inf_y\phi(x,y)$ is sufficient to obtain convergence rates. The second observation is that \eqref{eq:alternating-bregman-prox-1}--\eqref{eq:alternating-bregman-prox-2} can alternatively be formulated as a ``forward--backward natural gradient descent''. These two facts are consequences of Remark~\ref{rem:connections-am-fb-gd}.

\paragraph{Mirror descent.} Define 
\begin{equation}\label{eq:alt-bregman-md}
    f(x)=\inf_{y\in Y} u(x|y)+h(y). 
\end{equation}
Then by the envelope theorem, the $y$-update \eqref{eq:alternating-bregman-prox-1} can be written as an explicit mirror descent step
\begin{equation}\label{eq:alternating-bregman-prox-3}
    \nabla u(y_{n+1})-\nabla u(x_n)=-\nabla f(x_n).
\end{equation}
As for the $x$-update \eqref{eq:alternating-bregman-prox-2} it is a type of Bregman proximal step. We now claim that convexity of $f+g$ is sufficient to obtain a sublinear convergence rates.

\begin{prop}\label{prop:fwd-bwd-mirror-descent}
    Suppose that $f$ and $g$ are differentiable and that $u$ is strictly convex, twice-differentiable with a non-singular Hessian. Consider the iteration \eqref{eq:alternating-bregman-prox-1}--\eqref{eq:alternating-bregman-prox-2}, or equivalently \eqref{eq:alternating-bregman-prox-3}--\eqref{eq:alternating-bregman-prox-2}.
    \begin{enumerate}[(i)]    
        \item Suppose that $f+g$ is convex. Then 
        for any $x\in X, n\geq 1$,
        \begin{equation*}
            f(x_n)+g(x_n)\le f(x)+g(x) + \frac{u(x|y_0)-u(x_0|y_0)}{n}.
        \end{equation*}
                
        \item Suppose that $f+g$ is $\lambda$-strongly convex for some $\lambda\in(0,1)$. Then for any $x\in X, n\geq 1$,
        \begin{equation*}
            f(x_n)+g(x_n)\le f(x)+g(x) + \frac{\lambda \,(u(x|y_0)-u(x_0|y_0))}{\Lambda^n-1},
        \end{equation*}
        where $\Lambda\coloneqq(1-\lambda)^{-1}>1$. 
    \end{enumerate}
\end{prop}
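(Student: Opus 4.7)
The iteration \eqref{eq:alternating-bregman-prox-1}--\eqref{eq:alternating-bregman-prox-2} is exactly the alternating minimization of $\phi(x,y) = u(x|y) + g(x) + h(y)$ (Algorithm~\ref{algo:am}), so my plan is to apply Theorem~\ref{thm:am-rates} to this $\phi$ after establishing the (strong) five-point property.

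Writing $\phi = c + g + h$ with $c(x,y) = u(x|y)$: the Bregman cost has zero cross-curvature (Example~\ref{ex:costs-with-nonnegative-curvature}\ref{ex:costs-with-nonnegative-curvature:bregman}), its $c$-segments are straight lines in $x$ (Section~\ref{sec:examples:mirror-descent}), and $\inf_y \phi(x,y)$ is exactly $F(x) = f(x) + g(x)$. Theorem~\ref{thm:sc-fpp-am}(i) then reduces \eqref{fpp} to standard convexity of $F$ on straight lines, giving (i). For (ii), Theorem~\ref{thm:sc-fpp-am}(ii) requires $F - \lambda\phi(\cdot,y)$ convex on these $c$-segments which, once the affine part of $u(\cdot|y)$ is discarded, boils down to convexity of $f + (1-\lambda)g - \lambda u$; this is the precise sense in which ``$f+g$ is $\lambda$-strongly convex'' must be read here. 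As a sanity check I would verify the five-point property directly in the form \eqref{eq:fpp-F}: by \eqref{eq:prop_cross_diff} we have $\delta_\phi = \delta_c$, and a short computation using the Bregman three-point identity yields $\delta_c(x, y_0; x_0, y_1) = \langle \nabla u(y_0) - \nabla u(y_1), x - x_0 \rangle$; combining this with the first-order conditions $\nabla g(x_0) = \nabla u(y_0) - \nabla u(x_0)$ from the $x$-step and $\nabla u(y_1) = \nabla u(x_0) - \nabla f(x_0)$ from \eqref{eq:alternating-bregman-prox-3} collapses $\delta_c$ to $\langle \nabla F(x_0), x - x_0 \rangle$, so \eqref{eq:fpp-F} becomes exactly the tangent inequality for $F$.

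Given the (strong) five-point property, Theorem~\ref{thm:am-rates}(ii)--(iii) bounds $\phi(x_n, y_n)$, and the descent property $F(x_n) = \phi(x_n, y_{n+1}) \le \phi(x_n, y_n)$ together with an infimum over $y$ on the right-hand side converts this into the stated rates on $F(x_n)$. The main technical obstacle will be matching the constant in the bound: the generic output of Theorem~\ref{thm:am-rates} produces $\phi(x, y_0) - \phi(x_0, y_0) = [u(x|y_0) - u(x_0|y_0)] + [g(x) - g(x_0)]$, so obtaining the cleaner form $u(x|y_0) - u(x_0|y_0)$ of the statement requires reabsorbing the residual $g$-difference. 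The natural way is to retrace the telescoping step in the proof of Theorem~\ref{thm:am-rates}, exploiting the specific optimality $x_0 = \argmin_x u(x|y_0) + g(x)$ to cancel the $g$-terms, rather than invoking the theorem as a black box.
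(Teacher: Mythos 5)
Your main route is exactly the paper's: the iteration is the alternating minimization of $\phi(x,y)=u(x|y)+g(x)+h(y)$, the Bregman cost has zero cross-curvature with straight-line $c$-segments that do not depend on $y$, so Theorem~\ref{thm:sc-fpp-am} reduces \eqref{fpp} to convexity of $F=f+g$ along segments, and Theorem~\ref{thm:am-rates} then delivers the rates. Your direct verification of \eqref{eq:fpp-F} via $\nabla u(y_0)-\nabla u(y_1)=\nabla g(x_0)+\nabla f(x_0)=\nabla F(x_0)$ is correct, and your reading of the hypothesis in (ii) --- that Theorem~\ref{thm:sc-fpp-am}(ii) really asks for convexity of $f+(1-\lambda)g-\lambda u$ rather than of $f+g-\lambda u$ --- is more careful than the paper's one-line remark and is the right condition.

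The one step that does not go through as sketched is the reabsorption of the $g$-difference in the constant. The telescoping in Theorem~\ref{thm:am-rates} produces exactly $\phi(x,y_0)-\phi(x_0,y_0)=u(x|y_0)-u(x_0|y_0)+g(x)-g(x_0)$, and the optimality $x_0=\argmin_x u(x|y_0)+g(x)$ only tells you that this quantity is nonnegative; it does not cancel $g(x)-g(x_0)$, which has no definite sign (take $x$ far from a minimizer of $g$). So the bound your argument actually yields carries the extra $\tfrac{g(x)-g(x_0)}{n}$ term; the cleaner constant in the statement coincides with the $g=0$ formula of Theorem~\ref{thm:cv_rates} and appears to be an imprecision of the proposition rather than something recoverable by retracing the telescoping. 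A constant free of $g$ is available under the stronger, separate hypotheses of Theorem~\ref{thm:cv_fb} (cross-convexity of $f$ and cross-concavity of $-g$, giving $c(x,\yb_0)=u(x|x_0)$), but those are not implied by convexity of the sum $f+g$, which is precisely the point of this proposition.
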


The particularity of \Cref{prop:fwd-bwd-mirror-descent} is twofold. First $f$ is $c$-concave by construction, since it is defined by $f(x)=\inf_{y\in Y} u(x|y)+h(y)$. Here this means that $f$ is smooth relatively to $u$, see \Cref{prop:c-conv_relsmooth}. But more interestingly only convexity of $f+g$ is required, as opposed to the convexity of both $f$ and $g$ (which of course implies convexity of the sum). Let us explain where this comes from, and why this is not the case for general forward--backward schemes studied in \Cref{thm:cv_fb} which required $f$ to be cross-convex and $-g$ to be cross-concave.

By \Cref{thm:sc-fpp-am}, $\phi$ has the five-point property as soon as $f(x)+g(x)=\inf\phi(x,\cdot)$ is convex on $c$-segments $(x(t),y)$ satisfying $\nabla_x\phi(x(0),y)=0$, i.e.\ $\nabla_xc(x(0),y)=-\nabla g(x(0))$. In general this is a property that depends on $f$ and $g$ in a complicated manner. But for $c(x,y)=u(x|y)$, $c$-segments $(x(t),y)$ are given by regular segments, and in particular they do not depend on $y$. As a consequence convexity of $f+g$ ensures that the five-point property holds. By a similar reasoning, strong convexity of $f+g$ implies the strong five-point property.

\paragraph{Natural gradient descent.} Thanks to Section~\ref{sec:examples:natural-gd} we know that the cost $u(y|x)$ leads to natural gradient descent. Therefore by exchanging the roles of $x$ and $y$ in \eqref{eq:alt-bregman-minmin} and by Remark~\ref{rem:connections-am-fb-gd} we know that \eqref{eq:alternating-bregman-prox-1}--\eqref{eq:alternating-bregman-prox-2} can be formulated as a forward--backward type of natural gradient descent. Let us thus define $\ft(y)=\inf_{x\in X} u(x|y)+g(x)$ an objective function on $Y$ so that \eqref{eq:alt-bregman-minmin} can also be written as $\min_{y\in Y} \ft(y)+h(y)$. Let us change the order of \eqref{eq:alternating-bregman-prox-1} and \eqref{eq:alternating-bregman-prox-2} and do the $x$ update first (changing the index $n+1\to n$), since now it is $x$ that is the extra variable. Using the envelope theorem we write the $x$-update \eqref{eq:alternating-bregman-prox-2} as a natural gradient step 
\begin{equation}\label{eq:alt-bregman-ngd}
    x_n=y_n-\nabla^2u(y_n)^{-1}\nabla \ft(y_n).
\end{equation}
The $y$-update $y_{n+1}=\argmin_{y\in Y}u(x_n|y)+h(y)$ is then a Bregman proximal step.

\paragraph{Related work.} For $f$, $g$ the indicator of convex sets, \cite{Bregman1967} first introduced problem \eqref{eq:alt-bregman-minmin} to find an element in the intersection of the sets, as a generalization of the alternating projection of the POCS algorithm, presented in \Cref{sec:pocs}. Problem \eqref{eq:alt-bregman-minmin} was then considered in \cite{BausckeCombettesNoll2006} where convexity of $f$, $g$ and \emph{joint convexity} of $u(x|y)$ were assumed to show convergence of these proximal iterates. Joint convexity of $u(x|y)$ is a restrictive assumption since besides the Kullback--Leibler (KL) divergence, very few Bregman divergences have this property. Yet it is a powerful property since it implies for instance the convexity of $f$ in \eqref{eq:alt-bregman-md}. The reformulations of \eqref{eq:alt-bregman-minmin} as a mirror descent or as a natural gradient descent appear to be new. The fact that convergence rates can be obtained assuming only convexity of the sum $f+g$ in \Cref{prop:fwd-bwd-mirror-descent} also seems to be new.

\subsection{Sinkhorn}\label{sec:sinkhorn}

The Sinkhorn algorithm, also known as the iterative proportional fitting procedure (IPFP) and the RAS method, is used to solve entropic optimal transport and matrix scaling problems, see \cite{idel} and \cite[Chapter 4]{PeyreCuturiBook}. In this section we present a new formulation of Sinkhorn as an alternating minimization on the primal problem, in the form \eqref{sinkhorn:step1}--\eqref{sinkhorn:step2}. Using the theory developed in Section~\ref{sec:am} we recover a sublinear rate on the marginals first obtained in~\cite{Leger2020}.

Let us start by defining the entropic optimal transport problem. Let $(\X,\mu)$ and $(\Y,\nu)$ be two probability spaces and set 
\[
    C=\Pi(\mu,*),\quad D=\Pi(*,\nu).
\]
Here $\Pi(\mu,*)$ and $\Pi(*,\nu)$ denote the set of couplings over $\X\times \Y$ (i.e.\ joint laws) having first marginal $\mu$ and second marginal $\nu$, respectively. Let us also define $\Pi(\mu,\nu)=\Pi(\mu,*)\cap\Pi(*,\nu)$. Given $\eps>0$ and a $\mu\otimes\nu$-measurable function $b(x,y)$, the \emph{entropic optimal transport problem} is
\begin{equation}\label{eq:eot-problem}
    \min_{\pi \in \Pi(\mu,\nu)}\KL(\pi|e^{-b/\eps}\mu\otimes\nu).
\end{equation}    
Here $\KL(\cdot|\cdot)$ denotes the Kullback--Leibler divergence, defined by $\KL(\pi|\bar \pi)=\int \log\left(\nicefrac{d\pi}{d\bar \pi}\right)d\pi$
when $\pi$ is absolutely continuous with respect to $\bar \pi$ (denoted by $\pi \ll \bar \pi$), and $+\infty$ otherwise.

The Sinkhorn algorithm searches for the solution to \eqref{eq:eot-problem} by initializing $\pi_0(dx,dy)=e^{-b(x,y)/\eps}\mu(dx)\nu(dy)$ and by alternating ``Bregman projections'' onto $\Pi(\mu,*)$ and $\Pi(*,\nu)$, 
\begin{align}
    \gamma_{n+1} &=\argmin_{\gamma\in \Pi(\mu,*)} \KL(\gamma|\pi_n),\label{sinkhorn:1}\\
    \pi_{n+1} &= \argmin_{\pi\in \Pi(*,\nu)} \KL(\pi|\gamma_{n+1}).\label{sinkhorn:2}
\end{align}
When $\X$ and $\Y$ are finite sets this corresponds to alternatively rescaling the columns and rows of the matrix $\pi_0$. Moreover Sinkhorn can also be formulated as an alternating minimization on the dual problem to \eqref{eq:eot-problem}. A recent series of work, including our own \cite{mishchenko2019sinkhorn,Leger2020, MenschPeyre_online_Sinkhorn,AubinKorbaLeger} have studied various ways to write Sinkhorn as a mirror descent, either in the primal or the dual space. Here instead, we show that Sinkhorn can be written an alternating minimization of the $\KL$ divergence, directly in primal variables.

\begin{prop}[Sinkhorn as a primal alternating minimization]
    Initialize $\pi_0(dx,dy)=e^{-b(x,y)/\eps}\mu(dx)\nu(dy)$ and iterate
    \begin{align}
        \gamma_{n+1} &=\argmin_{\gamma\in \Pi(\mu,*)} \KL(\pi_n|\gamma),\label{sinkhorn:step1}\\
        \pi_{n+1} &= \argmin_{\pi\in \Pi(*,\nu)} \KL(\pi|\gamma_{n+1}).\label{sinkhorn:step2}
    \end{align}
    Then $\pi_n$ and $\gamma_n$ are the iterates of Sinkhorn.
\end{prop}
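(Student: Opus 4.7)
The second update \eqref{sinkhorn:step2} coincides verbatim with the classical Sinkhorn step \eqref{sinkhorn:2}, so the only thing to verify is that \eqref{sinkhorn:step1} and \eqref{sinkhorn:1} produce the same $\gamma_{n+1}$ from the same $\pi_n$. The plan is to use disintegration of measures against their first marginals and to show that both variants of Kullback--Leibler projection onto $\Pi(\mu,*)$ reduce to the same operation: replace the first marginal of $\pi_n$ by $\mu$ while keeping its conditionals unchanged.

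Concretely, I would first fix $\pi_n$ and write the disintegration $\pi_n(dx,dy) = \pi_n^1(dx)\,\pi_n(dy\mid x)$, where $\pi_n^1$ is the first marginal. For any candidate $\gamma \in \Pi(\mu,*)$ with $\pi_n \ll \gamma$ (resp.\ $\gamma \ll \pi_n$), decompose $\gamma(dx,dy)=\mu(dx)\,\gamma(dy\mid x)$ and apply the chain rule for relative entropy. This gives, for \eqref{sinkhorn:step1},
\[
\KL(\pi_n \mid \gamma) = \KL(\pi_n^1 \mid \mu) + \int \KL\bigl(\pi_n(\cdot\mid x) \bigm| \gamma(\cdot\mid x)\bigr)\,\pi_n^1(dx),
\]
and for \eqref{sinkhorn:1},
\[
\KL(\gamma \mid \pi_n) = \KL(\mu \mid \pi_n^1) + \int \KL\bigl(\gamma(\cdot\mid x) \bigm| \pi_n(\cdot\mid x)\bigr)\,\mu(dx).
\]
In each case the first summand is independent of the free variable $\gamma(\cdot\mid x)$, and the second summand is a nonnegative weighted integral of conditional KL terms that vanishes precisely when $\gamma(\cdot\mid x)=\pi_n(\cdot\mid x)$ almost everywhere. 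Thus both projections return the same measure
\[
\gamma_{n+1}(dx,dy) = \mu(dx)\,\pi_n(dy\mid x).
\]

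With $\gamma_{n+1}$ identified, the identical update \eqref{sinkhorn:step2}$=$\eqref{sinkhorn:2} then produces the same $\pi_{n+1}$, and an induction closes the argument. The main point requiring care is the measure-theoretic setup: one must verify that the support/absolute continuity conditions needed to apply the chain rule are preserved along the iterations, which follows from the initialization $\pi_0 = e^{-b/\varepsilon}\mu\otimes\nu$ and the fact that each update only reweights existing mass. Everything else is a short bookkeeping exercise once the disintegration identity is in place.
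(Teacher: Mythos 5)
Your proposal is correct and follows essentially the same route as the paper: both reduce the comparison to showing that the two KL-projections onto $\Pi(\mu,*)$ agree, by applying the disintegration (chain rule) identity \eqref{eq:disintegration} in each order and observing that in both cases the marginal term is fixed while the conditional term vanishes exactly when $K_{\gamma_{n+1}}=K_{\pi_n}$. The paper's proof is the same argument written more tersely.
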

\begin{proof}
    Since \eqref{sinkhorn:2} and \eqref{sinkhorn:step2} are the same, we just need to show that \eqref{sinkhorn:step1} is equivalent to \eqref{sinkhorn:1}. When $q\in \calP(\X\times \Y)$ is a joint probability measure, denote by $p_\X q$ its $\X$-marginal and define $K_q(x,dy)=\nicefrac{q(dx,dy)}{ p_{\X}q(dx)}$. We then have $q= p_\X q \otimes K_{q}$ and for $q,\bar q\in \calP(\X\times \Y)$ with $ q \ll \bar q$, it is not hard to check the disintegration formula     
    \begin{equation}\label{eq:disintegration}
        \KL( q | \bar q)=\KL( p_\X  q| p_\X \bar q)+\int_\X \KL(K_{ q}| K_{\bar q}) dp_\X q.
    \end{equation}
    In particular if the $\X$-marginals of $q$ and $\bar q$ are fixed, then $\KL( q | \bar q)$ is minimal when $K_q=K_{\bar q}$. 
    In both minimization problems \eqref{sinkhorn:1} and \eqref{sinkhorn:step1} the $\X$-marginal of $\gamma$ is contrained to be $\mu$. Therefore the solution to both problems is the same, given by $K_{\gamma_{n+1}}=K_{\pi_n}$ i.e.\ 
    \begin{equation}
        \gamma_{n+1}(dx,dy)=\pi_n(dx,dy) \,\frac{\mu(dx)}{p_\X\pi_n(dx)}.
    \end{equation}
\end{proof}

Although \eqref{sinkhorn:1}--\eqref{sinkhorn:2} and \eqref{sinkhorn:step1}--\eqref{sinkhorn:step2} look very similar, note that only \eqref{sinkhorn:step1}--\eqref{sinkhorn:step2} is an alternating minimization. We can then apply the theory developed in Section~\ref{sec:am}.

To connect with our notation we define $\calX=\calY=\calP(\X\times\Y)$ to be the space of joint probability measures, $c(\pi,\gamma)=\KL(\pi|\gamma)$, $g(\pi)=\iota_{\Pi(*,\nu)}$ and $h=\iota_{\Pi(\mu,*)}$, and $\Phi(\pi,\gamma)=c(\pi,\gamma)+g(\pi)+h(\gamma)$. Here $g$ and $h$ take the value $\infty$ and therefore don't fit strictly speaking into the theory developed in Section~\ref{sec:am}, but we ignore this small trouble which won't affect us as long as we stay in spaces where $g$ and $h$ are finite.

There are several ways to show that $\Phi$ satisfies the five-point property. One is to show it directly, which was already done by Csiszár and Tusnády in \cite[Section 3]{CsiszarTusnady1984}. Alternatively since $\KL$ is a Bregman divergence, we can use the results of Sections~\ref{sec:am-sufficient-fpp} and \ref{sec:examples:mirror-descent}. Set
\[
    F(\pi)=\inf_{\gamma\in\Pi(\mu,*)} \Phi(\pi,\gamma)=\KL(p_\X\pi|\mu).
\]
The function $\Phi(\pi,\gamma)=\KL(\pi|\gamma)$ is known to be \emph{jointly convex} in $(\pi,\gamma)$. Since the constraint $\Pi(\mu,*)$ is convex, we deduce that $F$ is convex, and by \Cref{thm:sc-fpp-am} that $\Phi$ satisfies \eqref{fpp}. By \Cref{thm:am-rates} the following sublinear rate holds: for any $\pi\in \Pi(\mu,\nu)$, we have
\begin{equation}\label{eq:rate-sinkhorn}
    \KL(p_\X\pi_n|\mu)\le \frac{\KL(\pi|\gamma_0)}{n}.
\end{equation}
This is the same rate obtained in~\cite{Leger2020,AubinKorbaLeger}. 

Let us make two final observations. By Remark~\ref{rem:connections-am-fb-gd} and \eqref{sinkhorn:step1}--\eqref{sinkhorn:step2} (see also \eqref{eq:alt-bregman-md}--\eqref{eq:alternating-bregman-prox-3}) we automatically know that Sinkhorn can be written as a mirror descent over $\pi\in \Pi(*,\nu)$, with objective function $F$ and Bregman divergence $\KL$. This was the formulation put forth in \cite{AubinKorbaLeger}. Since $F$ is smooth by construction and convex, \eqref{eq:rate-sinkhorn} directly follows.

Alternatively we can focus on the $\gamma$ variable and use \eqref{eq:alt-bregman-ngd}. Then Sinkhorn can be written as a natural gradient descent over $\Pi(\mu,*)$ with objective function $\tilde F(\gamma)=\inf_{\pi\in\Pi(*,\nu)}\KL(\pi|\gamma)=\KL(\nu|p_\Y\gamma)$ and potential function $H(\gamma)=\KL(\gamma|R)$ where $R$ is any reference measure.

\subsection{Expectation--Maximization}\label{sec:EM}

The Expectation--Maximization (EM) algorithm is an alternating minimization method used to estimate the parameters of a statistical model \cite{Neal1998,lange2016mm}. Let $\X$ be a set of observed data, $\Z$ be a set of latent (i.e.\ hidden) data and let $\{p_\theta\in\calP(\X \times \Z) : \theta\in\Theta\}$ be a \emph{statistical model}, where $\Theta$ is a set of parameters. Here $\calP(\X \times \Z)$ denotes the space of probability measures over $\X\times\Z$, in other words the set of joint laws or couplings. Then, having observed $\mu\in\calP(\X)$ (for instance an empirical measure) we want to find a parameter $\theta\in\Theta$ that maximizes the \emph{likelihood}. This is equivalent to the minimization problem~\cite{Neal1998}
\begin{equation}\label{eq:em-min}
    \min_{\theta\in \Theta} \KL(\mu|p_{\X} p_\theta),
\end{equation}
where given a coupling $p(dx,dz)\in\calP(\X\times\Z)$ we denote its first marginal by $p_\X p(dx)\coloneqq \int_\Z p(dx,dz)$. The Kullback--Leibler divergence $\KL$ is defined in Section~\ref{sec:sinkhorn}.

Let $F(\theta) = \KL(\mu|p_{\X} p_\theta)$ be the objective function in \eqref{eq:em-min}. Because we suppose that we only really have access to $p_\theta(dx,dz)$ and that $p_\X p_\theta$ is not tractable, we cannot work directly with $F(\theta)$ (and do a gradient descent or mirror descent on $F$, say). Instead the idea of EM is to introduce a surrogate for $F$. Let $\Pi(\mu,*)$ denote the couplings $\pi(dx,dz)$ with first marginal $\mu$. Then the \emph{data processing inequality} says that for any $\pi\in\Pi(\mu,*)$,
\begin{equation}\label{eq:data-processing}
    \KL(\mu|p_\X p) \leq \KL(\pi|p),
\end{equation}
and that equality is attained when $\pi=\frac{\mu(dx)}{p_{\X}p(dx)}p(dx,dz)$, see for example \eqref{eq:disintegration}. If we define the surrogate $\Phi(\theta,\pi)=\KL(\pi|p_\theta)$ for any $\theta\in\Theta$ and $\pi\in\Pi(\mu,*)$, then \eqref{eq:data-processing} implies $F(\theta) \leq  \Phi(\theta,\pi)$ for all $\pi\in\Pi(\mu,*)$. With the equality case in mind we obtain
\[
    F(\theta) =\inf_{\pi\in\Pi(\mu,*)} \Phi(\theta,\pi).
\]
The EM algorithm is then the alternating minimization of $\Phi$,

\begin{align}
    \pi_{n+1} &= \argmin_{\pi\in\Pi(\mu,*)} \KL(\pi|p_{\theta_n}), \label{eq:E-step}\\
    \theta_{n+1} &= \argmin_{\theta\in \Theta} \KL(\pi_{n+1}|p_{\theta}). \label{eq:M-step}
\end{align}
Usually \eqref{eq:E-step} is called the E-step and \eqref{eq:M-step} is called the M-step. Let us now derive, under certain conditions, three formulations of EM: as a natural gradient descent in $p$, as a mirror descent in $\theta$ and as a mirror descent in $\pi$. The first one is new, the second one recovers~\cite{Kunstner2021Homeomorphic} and the last one is taken from~\cite{AubinKorbaLeger}. All these formulations can be seen as consequences of Remark~\ref{rem:connections-am-fb-gd}, and see also \eqref{eq:alternating-bregman-prox-3} and \eqref{eq:alt-bregman-ngd}. For some of these formulations it will be convenient to introduce a relative entropy $H(\cdot)=\KL(\cdot|R)$ for some reference measure $R\in\calP(\X\times\Z)$, so that the Kullback--Leibler divergence can be written as the Bregman divergence of $H$.

\subsubsection{EM as projected natural gradient descent}

Let us adapt the notation to this particular case. 
Set $\calX=\calY=\calP(\X\times\Z)$ and $B=\Pi(\mu,*)$ and $C=\calP_\Theta\coloneqq \{p_\theta\in \calP(\X\times\Z) : \theta\in\Theta\}$. Then consider the cost $c(p,\pi)=\KL(\pi|p)$ together with the indicator functions $g(p)=\iota_C(p)$ and $h(\pi)=\iota_B(\pi)$. We then define $\Phi(\pi,p)=\KL(\pi|p)+g(p)+h(\pi)$, as well as 
\begin{equation*}
    f(p)=\inf_\pi c(p,\pi)+h(\pi)=\inf_{\pi\in\Pi(\mu,*)} \KL(\pi|p)=\KL(\mu|p_\X p). 
\end{equation*}
Consider the EM iterates $\pi_n,p_n$, where we write $p_n\coloneqq p_{\theta_n}$.
Following Remark~\ref{rem:connections-am-fb-gd}, by the envelope theorem we have $\nabla f(p_n)=-\nabla^2H(p_n)(\pi_{n+1}-p_n)$. Thus the $\pi_{n+1}$ update can be written as a natural gradient step
\[
    \pi_{n+1}-p_n = -\nabla^2H(p_n)^{-1}\nabla f(p_n).
\] 
The $p_{n+1}$ update then projects $\pi_{n+1}$ back onto the parameter constraint 
\begin{equation*}
    p_{n+1}=\argmin_{p\in \calP_\Theta} \KL(\pi_{n+1}|p). 
\end{equation*}

\subsubsection{Mirror descent in $\theta$ for exponential families}

In this section we revisit results of \cite{Kunstner2021Homeomorphic} which shows that in the context of exponentials families, EM can be written as a mirror descent over $\theta$. Suppose that $\Theta$ is a convex subset of $\Rd$, that we are given $s\colon\X\times\Z\to\Rd$ and that
\[
    p_\theta(dx,dz) = e^{\bracket{s(x,z),\theta} - A(\theta)}R(dx,dz).
\]
Here $R$ is a reference measure on $\X\times\Z$ and $A(\theta)=\log(\int e^{\bracket{s,\theta}}dR)$ is the normalization coefficient that ensures that $p_\theta$ has total mass $1$. By Hölder's inequality $A$ is a convex function. Set $\calX=\Theta$, $\calY=\Pi(\mu,*)$, $\Phi(\theta,\pi) = \KL(\pi|p_\theta)$ and 
\begin{equation*}
    F(\theta)=\inf_{\pi\in\Pi(\mu,*)} \Phi(\theta,\pi)=\KL(\mu|p_\X p_\theta).
\end{equation*}
Given the form of $p_\theta$, a computation shows that 
\[
    \Phi(\theta,\pi) = A(\theta) - \bracket{\theta,\int_{\X\times\Z} s\,d\pi} + \KL(\pi|R).
\]
Here we have the sum of the convex function $A(\theta)$ and a linear term in $\theta$, thus we recognize the start of a Bregman divergence $A(\theta|*)+\dots$. This already tells us we can expect a mirror descent. Instead of computing this Bregman divergence precisely we follow Remark~\ref{rem:connections-am-fb-gd} and use the envelope theorem to write the E-step $\pi_{n+1}=\argmin_{\pi\in \Pi(\mu,*)}\Phi(\theta_n,\pi)$ as 
\[
    \nabla F(\theta_n)=\nabla A(\theta_n) - \int s\,d\pi_{n+1}.
\]
As for the M-step $\theta_{n+1}=\argmin_{\theta\in\Theta}\Phi(\theta,\pi_{n+1})$ it takes the form $\nabla A(\theta_{n+1})-\int s\,d\pi_{n+1}=0$. Combining the E- and M-step we obtain the mirror descent
\[
    \nabla A(\theta_{n+1})-\nabla A(\theta_n)=-\nabla F(\theta_n).
\]
Note that by construction $F$ is $c$-concave thus smooth relatively to $\KL$. If the function $s(x,z)$ makes $F$ convex (resp. strongly convex) we would then have sublinear convergence rates (resp. linear convergence rates).

\subsubsection{Mirror descent in $\pi$ and Latent EM}

In this section we revisit results of \cite{AubinKorbaLeger} where we show that EM can always be written as a mirror descent in $\pi$ and where we show that when $\theta$ is a certain non-parametric distribution over the latent space, the mirror descent has a convex objective function, from which convergence rates ensue.

Here $\pi$ will be the ``primal variable'' so we define 
$\calX=\calY=\calP(\X\times\Z)$, $c(\pi,p)=\KL(\pi|p)$, $g(\pi)=\iota_B(\pi)$ with $B=\Pi(\mu,*)$ and $h(p)=\iota_C(p)$ with $C=\calP_\Theta\coloneqq \{p_\theta\in \calP(\X\times\Z) : \theta\in\Theta\}$. We then define $\Phi(\pi,p)=\KL(\pi|p)+g(\pi)+h(p)$ and 
\begin{equation}\label{eq:latent-em-f}
    f(\pi)=\inf_{p} \KL(\pi|p)+h(p)=\inf_{p\in\calP_\Theta} \KL(\pi|p).
\end{equation}
Following Remark~\ref{rem:connections-am-fb-gd}, we have by the envelope theorem that $\nabla f(\pi_n)=\nabla H(\pi_n)-\nabla H(p_n)$, with $p_n\coloneqq p_{\theta_n}$. Thus the M-step \eqref{eq:M-step} can be written as a mirror descent update
\[   
    \nabla H(p_n)-\nabla H(\pi_n)=-\nabla f(\pi_n).
\] 
The E-step is then the projection $\pi_{n+1}=\argmin_{\pi\in\Pi(\mu,*)} \KL(\pi|p_n)$ and EM is formulated as a projected mirror descent method. Note that since $\Pi(\mu,*)$ is a nice affine subspace of $\calP(\X\times\Z)$ we could alternatively take $\calX=\Pi(\mu,*)$, work directly in $\Pi(\mu,*)$ and use the intrinsic gradient $\nabla_\calX$. This gives a (non projected) mirror descent within $\Pi(\mu,*)$. This was the approach in~\cite{AubinKorbaLeger}.

Let us now look more closely at \eqref{eq:latent-em-f}. First $f$ is $c$-concave by construction which by \Cref{prop:c-conv_relsmooth} says that $f$ is smooth relatively to $\KL$. Additionally $c(\pi,p)=\KL(\pi|p)$ is known to be \emph{jointly convex} over $(\pi,p)$. As a consequence, $f$ is convex as soon as $\calP_\Theta$ is a convex subset of $\calP(\X\times\Z)$. Let us now study a simple instance where that is the case.

Take $\Theta\subset\calP(\Z)$ to be a family of probability measures over the latent space and assume that it is a convex subset of $\calP(\Z)$. Fix a Markov kernel $K(dx|z)$ from $\Z$ to $\X$ and consider the model
\begin{equation}\label{eq:latent-em}
    p_\theta(dx,dz) = K(dx|z)\theta(dz).
\end{equation}
We called this setting Latent EM in \cite{AubinKorbaLeger} and refer to that work for background and more detail on this problem. There we also connected Latent EM to the Richardson--Lucy algorithm for image deconvolution~\cite{Richardson1972,Lucy1974}. In \eqref{eq:latent-em} $\Theta$ is convex and $p_\theta$ is the image of $\theta$ by a linear operator, therefore $\calP_\Theta$ is convex. This implies convexity of $f$ and since $g$ is also convex
we directly recover a sublinear convergence rate as in~\cite{AubinKorbaLeger}.

\section*{Acknowledgements}
The authors would like to thank Robert M. Gower and Antonin Chambolle for helpful comments and suggestions.

\appendix

\section{Appendix}

\subsection{Proofs of Section~\ref{sec:am}}

\begin{lemma}[Envelope theorem] \label{lemma:envelope}
    Let $X,Y$ be two open subsets of $\Rd$ and let $\phi$ be a differentiable function on $X\times Y$. Fix $\xb\in X$. Let $F(x)=\inf_{y\in Y}\phi(x,y)$ and suppose that $F$ is differentiable at $\xb$. If the minimization problem $\inf_{y\in Y}\phi(\xb,y)$ has a minimum at $\yb$, then 
    \[
      \nabla F(\xb)=\nabla_x\phi(\xb,\yb).
    \]
  \end{lemma}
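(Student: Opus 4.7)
The plan is to exploit the fact that $\bar y$ is a specific element of $Y$ to get one inequality for $F$, combined with the defining inequality of $F$ as an infimum, producing a function that attains an interior minimum at $\bar x$.

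First I would set $g(x) \coloneqq \phi(x,\bar y) - F(x)$ on a neighborhood of $\bar x$ in $X$. Since $\bar y \in Y$, the infimum bound gives $F(x) \le \phi(x,\bar y)$ for every $x$, so $g(x) \ge 0$. On the other hand, since $\bar y$ attains the infimum at $\bar x$, we have $F(\bar x) = \phi(\bar x,\bar y)$, hence $g(\bar x) = 0$. Therefore $\bar x$ is a local (in fact global) minimum of $g$ on $X$.

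Next, since $\phi$ is differentiable (so $x \mapsto \phi(x,\bar y)$ is differentiable at $\bar x$ with gradient $\nabla_x\phi(\bar x,\bar y)$) and $F$ is differentiable at $\bar x$ by assumption, $g$ is differentiable at $\bar x$ with
\[
    \nabla g(\bar x) = \nabla_x\phi(\bar x,\bar y) - \nabla F(\bar x).
\]
Because $X$ is an open subset of $\R^d$ and $\bar x$ is an interior minimizer, the first-order condition yields $\nabla g(\bar x)=0$, which is exactly $\nabla F(\bar x) = \nabla_x\phi(\bar x,\bar y)$.

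No real obstacle arises: the only subtlety is that we do not assume $F$ is differentiable a priori (as an infimum of differentiable functions, $F$ could fail to be smooth in general), but this is taken care of by the hypothesis. Everything else is a one-line application of the minimum principle on an open set.
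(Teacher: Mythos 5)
Your proof is correct and follows essentially the same argument as the paper: both define the nonnegative function $\phi(\cdot,\yb)-F(\cdot)$, note that it vanishes at $\xb$ so that $\xb$ is a global minimizer, and conclude by the first-order condition at an interior minimum. No gaps.
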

  \begin{proof}
      By definition of $F$ we have for any $x\in X$ that $F(x)\leq \phi(x,\yb)$ and we have equality at $x=\xb$. Therefore $\xb$ is a global minimizer of $G(x)\coloneqq \phi(x,\yb)-F(x)$ and since $G$ is differentiable at $\xb$ its gradient vanishes.
\end{proof}

\begin{lemma} \label{lemma:costs-with-nonnegative-curvature}
    The costs in \Cref{ex:costs-with-nonnegative-curvature} have nonnegative curvature.
\end{lemma}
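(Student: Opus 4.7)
My strategy is to exploit as much as possible the invariance properties of cross-curvature collected in \Cref{prop:facts-cc}, so that each case reduces, up to adding functions of a single variable and reparametrizing $X$ or $Y$, either to a bilinear cost (for which $\S_c = 0$ is immediate from formula \eqref{eq:cc-coord}) or to the logarithmic cost in item~7, which is the only item requiring a genuine computation. Items 8 and 9 are quoted directly from \cite{KimMcCann2012}.

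Concretely, for item 1, I would expand $\norm{x-y}^2 = \norm{x}^2 - 2\bracket{x,y} + \norm{y}^2$ and drop the pure $x$- and $y$-terms via \Cref{prop:facts-cc}\ref{prop:facts-cc:gh}, leaving the bilinear cost $-2\bracket{x,y}$, for which all the mixed third derivatives $c_{ij\bar k}$ and the fourth derivative $c_{i\bar j k\bar l}$ vanish, so $\S_c = 0$ by \eqref{eq:cc-coord}. Item 2 then follows from item 1 by changing coordinates via the diffeomorphisms $A$ and $B$ and invoking \Cref{prop:facts-cc}\ref{prop:facts-cc:invariant}. For item 3, I would expand the Bregman divergence as $u(x) - [u(y) - \bracket{\nabla u(y),y}] - \bracket{\nabla u(y),x}$, discard the univariate pieces, then reparametrize $y \mapsto \tilde y = \nabla u(y)$ to reduce to the bilinear cost $-\bracket{\tilde y, x}$. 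Items 4 and 5 are handled identically: after the changes of variables $\tilde x_i = e^{x_i/\eps}$, $\tilde y_i = e^{-y_i/\eps}$, the costs become linear or bilinear in $(\tilde x,\tilde y)$.

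For item 6, I would first observe that the log-divergence can be rewritten, modulo functions of a single variable, as
\[
\frac{1}{\alpha}\log\bigl(1 + \alpha\bracket{\nabla u(y),y} - \alpha\bracket{\nabla u(y),x}\bigr),
\]
which is (up to the scalar factor $\alpha^{-1}$) of the form stated in item 7 with $A_0 = 1$, $A_1(y) = \alpha\bracket{\nabla u(y),y}$, $A_2(x) = -\alpha x$, $A_3(y) = \nabla u(y)$. The factor $\alpha^{-1}$ pulls out linearly in front of $\S_c$, so the identity $\S_c(\xi,\eta) = 2\alpha[\nabla^2_{xy}c(x,y)(\xi,\eta)]^2$ for item 6 follows from the corresponding identity $\S_c(\xi,\eta) = 2[\nabla^2_{xy}c(x,y)(\xi,\eta)]^2$ of item 7.

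The one substantive calculation is item 7. Here I would apply a change of coordinates $x \mapsto A_2(x)$ on $X$ and $y \mapsto A_3(y)$ on $Y$, so that after absorbing $A_0 + A_1(y)$ into a function of $y$ alone one reduces to proving the formula for the model cost $c(x,y) = \log(a(y) + \bracket{x,y})$ for some positive function $a(y)$. From there, I would plug into \eqref{eq:cc-coord}, computing $c_{i\bar j}$, $c_{ij\bar k}$, $c^{\bar k i}$ and $c_{i\bar j k\bar l}$ in terms of $\Phi \coloneqq a(y) + \bracket{x,y}$ and its derivatives; a number of cancellations involving $c_{ij} = 0$ (since the cost is linear in $x$ inside the log up to $y$-dependent terms) should leave exactly $2\bigl[\nabla^2_{xy}c\,(\xi,\eta)\bigr]^2$. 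The main obstacle I anticipate is keeping the bookkeeping of this direct computation manageable; the reduction to the model $\log(a(y) + \bracket{x,y})$ is what should make it tractable. Finally, items 8 and 9 are cited from \cite{KimMcCann2012}.
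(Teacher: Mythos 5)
Your treatment of items 1--5, 8 and 9 matches the paper's proof: reduce to a bilinear cost by dropping univariate terms (\Cref{prop:facts-cc}\ref{prop:facts-cc:gh}) and reparametrizing (\Cref{prop:facts-cc}\ref{prop:facts-cc:invariant}), and cite \cite{KimMcCann2012} for the last two. That part is correct and essentially identical.

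For items 6 and 7 you invert the paper's logical order, and this is where the approaches genuinely differ. The paper computes item 6 directly and then observes that item 7 satisfies the same key identity; you instead prove item 7 as the model case and deduce item 6 from it by writing the log-divergence (modulo univariate terms) as $\alpha^{-1}\log(A_0+A_1(y)+\bracket{A_2(x),A_3(y)})$. Your reduction is valid and arguably more economical, but it rests on the fact that $\S_{\lambda c}=\lambda\,\S_c$, which you assert ("pulls out linearly") without justification. This is true --- under $c\mapsto\lambda c$ the first term of \eqref{eq:cc-coord} scales as $\lambda\cdot\lambda^{-1}\cdot\lambda$ because of the inverse matrix $c^{\mb r}$, and the second as $\lambda$, while $c$-segments are unchanged --- but since $\S_c$ is \emph{not} linear in $c$ (only homogeneous of degree one), you should state this one-line check explicitly; combined with $\nabla^2_{xy}\tilde c=\alpha\nabla^2_{xy}c$ it does give the factor $2\alpha$. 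The other difference is in how the model case is computed. You propose plugging $c=\log(a(y)+\bracket{x,y})$ directly into \eqref{eq:cc-coord}; this works but is the laborious route you yourself flag. The paper's trick is worth knowing: exponentiate to get $e^{c}=\Phi$ with $\Phi$ affine in $x$, so two $x$-derivatives give the closed identity $c_{ik}+c_ic_k=0$; differentiating this twice in $y$ and contracting with $c^{\jb m}$ produces $c_{ik\sbar}c^{\sbar t}c_{t\jb\lb}-c_{i\jb k\lb}=c_{i\jb}c_{k\lb}+c_{\jb k}c_{i\lb}$ with almost no bookkeeping, and contracting with $\xi^i\eta^{\jb}\xi^k\eta^{\lb}$ yields $2[\nabla^2_{xy}c(\xi,\eta)]^2$ immediately. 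Your plan is sound, but I would adopt this identity-based computation for the model case rather than the brute-force expansion.
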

\begin{proof}    
    \begin{enumerate}[1.]
        \item Let $c(x,y)=\norm{x-y}^2$. Then the mixed partial derivatives $c_{i\jb k}$ and $c_{i\jb\lb}$ both vanish. Therefore $\S_c=0$.
        
        \item Let $c(x,y)=\norm{A(x)-B(y)}^2$. Then $c(x,y)=\norm{x'-y'}^2$ in the new coordinates $x'=A(x)$, $y'=B(y)$. By \Cref{prop:facts-cc}\ref{prop:facts-cc:invariant} cross-curvature is invariant under a change of coordinates. Therefore like for the quadratic cost $\S_c=0$. 
        
        \item We have $c(x,y)=u(x)-u(y)-\bracket{\nabla u(y),x-y}$. By \Cref{prop:facts-cc}\ref{prop:facts-cc:gh} $\S_c$ is unaffected by adding functions of $x$ or functions of $y$. Therefore we may as well suppose that $c(x,y)=-\bracket{\nabla u(y),x}$. But now by a change of variables $y'=\nabla u(y)$ this cost is equivalent to the bilinear cost $-\bracket{y',x}$, itself equivalent to the quadratic cost. Thus $\S_c=0$. 
        
        \item This is a particular case of the next point.
        
        \item In the coordinates $\xt_i=e^{x_i/\eps}$, $\yt_j=e^{-y_j/\eps}$ the cost becomes $\sum_{ij}\xt_iK_{ij}\yt_j$. We recover the bilinear cost which has zero cross-curvature for the same reasons as in 1.

        \item This is proven by Wong and Yang in \cite{WongYang2022}. Since their notation slightly differ form ours and we defined $\S_c$ with a different multiplicative constant we redo the calculations here. Again by \Cref{prop:facts-cc}\ref{prop:facts-cc:gh} we may drop $u(x)$ and $u(y)$ and pretend that $c(x,y)=\frac1\alpha\log(1-\alpha\bracket{\nabla u(y),x-y})$. 
        
        We have $e^{\alpha c(x,y)} = 1-\alpha \bracket{\nabla u(y),x-y}$. Taking two $x$-derivatives this implies that $e^{\alpha c}[c_{ik} + \alpha c_i c_k]=0$ and thus $c_{ik} + \alpha c_ic_k=0$. Taking a $y$-derivative we obtain  $c_{i\jb k}+\alpha[c_{i\jb}c_k + c_{\jb k}c_i]=0$ and multiplying by the inverse matrix $c^{\jb m}$, 
        \[
            c_{i\jb k} c^{\jb m} = -\alpha[\delta_i^mc_k+\delta_k^mc_i].
        \]
        Then another $y$-derivative yields
        \[
            (c_{i\jb k\lb} - c_{ik\sbar}c^{\sbar t}c_{t\jb\lb}) c^{\jb m} = -\alpha [\delta_i^mc_{k\lb}+\delta_k^mc_{i\lb}].
        \]
        This implies that
        \[
            c_{ik\sbar}c^{\sbar t}c_{t\jb\lb} - c_{i\jb k\lb} = \alpha [c_{i\jb}c_{k\lb}+c_{\jb k}c_{i\lb}].
        \]
        \item This cost satisfies $e^{\alpha c}[c_{ik} + \alpha c_i c_k]=0$ which is the same identity as for the log-divergence, and then the proof carries the same way.

        \item \cite[Theorem 6.2]{KimMcCann2012}.
        \item \cite[Theorem 3.1]{KimMcCann2012}.
    \end{enumerate}
\end{proof}

\subsection{Proofs of Section~\ref{sec:gd}}\label{app:sec-gd}

\begin{proof}[Proof of \Cref{thm:differential-criterion-c-concavity}]
    \textbf{Direct implication:} we show that $f$ is $c$-concave.
    Let $\xb\in X$. We want to show that $f(\xb)=\inf_{y\in Y} c(\xb,y)+f^c(y)$. Since we always have $f(\xb)\leq \inf_{y\in Y} c(\xb,y)+f^c(y)$ it is sufficient to show that there exists $\yh\in Y$ such that
    \[
        f(\xb)\geq c(\xb,\yh)+f^c(\yh).
    \]
    By definition of $f^c$ this is equivalent to: for all $x\in X$,
    \begin{equation}\label{eq:proof-differential-criterion-1}
        f(x)-c(x,\yh)\leq f(\xb)-c(\xb,\yh).
    \end{equation}
    We choose $\yh$ to be a point satisfying $-\nabla_xc(\xb,\yh)=-\nabla f(\xb)$ (which exists by the assumptions of the theorem). We fix $x\in X$ and prove \eqref{eq:proof-differential-criterion-1}.
    
    Let $t\mapsto(x(t),\yh)$ be the $c$-segment joining $x(0)=\xb$ to $x(1)=x$. This path exists by \ref{ass:biconvex}. Let $a(t)=f(x(t))-c(x(t),\yh)$. Then \eqref{eq:proof-differential-criterion-1} can be written as 
    \[
        a(1)\leq a(0).
    \] 
    Let us show that $a(t)$ attains its maximum at $t=0$. Firstly note that by definition of $\yh$ we have that $a'(0)=0$. Secondly we show that $a(t)$ is a concave function of $t$. Write
    \[
        a(t)=f(x(t))-c(x(t),y) + [c(x(t),y)-c(x(t),\yh)],
    \]
    where $y$ is a yet unspecified point in $Y$. By \Cref{lemma:delta-c-segments} we know that the quantity in brackets is a concave function of $t$ and therefore it suffices to show that $b(t)\coloneqq f(x(t))-c(x(t),y)$ is concave. Differentiating $b(t)$ twice leads to 
    \[
        b''(t)=\big(\nabla^2f(x(t))-\nabla^2_{xx}c(x(t),y)\big)(\dot x,\dot x) + \bracket{\nabla f(x(t))-\nabla_{x}c(x(t),y),\ddot x}.
    \]
    We now fix $t$ and choose $y$ such that $\nabla f(x(t))=\nabla_{x}c(x(t),y)$; by our assumptions $a''(t)\leq 0$.

    \textbf{Reverse implication:} With the same configuration of points as above, if $f$ is $c$-concave then we have \eqref{eq:proof-differential-criterion-1}. Doing a Taylor expansion of the left-hand side in $x$ about $\xb$ directly implies
    \[
        \nabla^2f(\xb)-\nabla^2_{xx}c(\xb,\yh)\leq 0.
    \]
\end{proof}

\begin{proof}[Proof of \Cref{thm:cv_fb}]
    (i): By definition of the $c$-transform we can bound $f(x_{n+1}) \leq c(x_{n+1},y_{n+1})+f^c(y_{n+1})$. Since $f$ is $c$-concave, \eqref{algo:fb:step1} gives $f(x_n)=c(x_n,y_{n+1})+f^c(y_{n+1})$. Thus 
    \begin{equation}\label{eq:proof-fb-rates-1}
        f(x_{n+1}) \leq f(x_n)-c(x_n,y_{n+1}) + c(x_{n+1},y_{n+1}).
    \end{equation}
    On the other hand, by the $x$-update \eqref{algo:fb:step2} we have \begin{equation}\label{eq:proof-fb-rates-0}
        g(x_{n+1}) + c(x_{n+1},y_{n+1})\leq g(x_n)+c(x_n,y_{n+1}).
    \end{equation}
    Summing \eqref{eq:proof-fb-rates-1} and \eqref{eq:proof-fb-rates-0} yields
    \begin{equation}\label{eq:proof-fb-rates-2}
        f(x_{n+1})+g(x_{n+1})\leq f(x_n) +g(x_n).
    \end{equation}

    (ii), (iii): Let $\lambda \geq 0$. By \ref{ass:fb-c}, for any $n$ we can define $\yb_n\in \argmin_{y\in Y}c(x_n,y)$. By the envelope theorem in \Cref{lemma:envelope}, \ref{ass:fb-c} and $c$-concavity of $f$ we have respectively $\nabla_xc(x_n,\yb_n)=0$ and $-\nabla_xc(x_n,y_{n+1})=-\nabla f(x_n)$. Then strong cross-convexity of $f$ gives us
    \begin{multline}\label{eq:proof-fb-rates-3}
        f(x_n)\leq f(x)+[c(x,\yb_n)-c(x_n,\yb_n)]-c(x,y_{n+1})+c(x_n,y_{n+1})\\-\lambda (c(x,\yb_n)-c(x_n,\yb_n)).
    \end{multline}
    On the other hand we have that $\nabla_xc(x_{n+1},\yb_{n+1})=0$ and by \eqref{algo:fb:step1}, $-\nabla_xc(x_{n+1},y_{n+1})=\nabla g(x_{n+1})$. Then strong cross-concavity of $-g$ gives us 
    \begin{multline}\label{eq:proof-fb-rates-4}
        g(x_{n+1})\leq g(x)-[c(x,\yb_{n+1})-c(x_{n+1},\yb_{n+1})]+c(x,y_{n+1})-c(x_{n+1},y_{n+1})\\-\mu((c(x,\yb_{n+1})-c(x_{n+1},\yb_{n+1}))).
    \end{multline}
    After summing \eqref{eq:proof-fb-rates-3} and \eqref{eq:proof-fb-rates-4} and using that $c(x_n,\yb_n)=0$ by \ref{ass:fb-c}, we obtain 
    \begin{equation*}
        f(x_n)-c(x_n,y_{n+1})+c(x_{n+1},y_{n+1})+g(x_{n+1})\leq F(x)+(1-\lambda)c(x,\yb_n)- (1+\mu)c(x,\yb_{n+1}).
    \end{equation*}
    By \eqref{eq:proof-fb-rates-1} the left-hand side can be bounded from below by $F(x_{n+1})$ and we obtain
    \begin{equation}\label{eq:proof-fb-rates-5}
        F(x_{n+1})\leq F(x)+(1-\lambda)c(x,\yb_n)- (1+\mu)c(x,\yb_{n+1}).
    \end{equation}

    If $\lambda=\mu=0$ we directly sum from $0$ to $n-1$. By the descent property in (i) the left-hand side can be bounded from below by $n F(x_n)$. As for the right-hand side we obtain a telescopic sum that can be bounded above by $nF(x)+c(x,\yb_0)$ since $-c(x,\yb_{n})\leq 0$ by \ref{ass:fb-c}. We obtain the desired result.

    If $\lambda+\mu>0$  we proceed as in the proof of \Cref{thm:am-rates}, multiply both sides of \eqref{eq:proof-fb-rates-5} by $\left(\frac{1+\mu}{1-\lambda}\right)^{n+1}$ and sum from $0$ to $n-1$. We obtain the desired inequality after some simple algebra. 
\end{proof}

\begin{proof}[Proof of \Cref{prop:fb_fpp}]
    Since $f$ is $c$-concave we have $\inf_{y\in Y}\phi(x,y)=f(x)+g(x)=F(x)$. 
    Let us then prove the five-point inequality in the form \eqref{eq:fpp-F}: for all $x\in X$ and $y_0\in Y$,
    \begin{equation} \label{eq:proof-fb_fpp-1}
        F(x)\geq F(x_0)+\delta_\phi(x,y_0;x_0,y_1),
    \end{equation}
    with $x_0=\argmin_{x\in X} c(x,y_0)+g(x)$ and $y_1=\argmin_{y\in Y}c(x_0,y)+f^c(y)$. By first-order conditions we have $-\nabla_xc(x_0,y_0)=\nabla g(x_0)$ and by the envelope theorem in \Cref{lemma:envelope} we have that $-\nabla_xc(x_0,y_1)=-\nabla f(x_0)$. Moreover by \ref{ass:fb-c} and the envelope theorem again we know that there exists a point, call it $\yb_0\in Y$, such that $\nabla_xc(x_0,\yb_0)=0$. 
    
    For the points $x,x_0,\yb_0,y_1$, cross-convexity of $f$ implies
    \begin{equation}\label{eq:proof-fb_fpp-2}
        f(x)\geq f(x_0)+c(x,y_1)-c(x,\yb_0)-c(x_0,y_1)+c(x_0,\yb_0).
    \end{equation}
    On the other hand for the points $x,x_0,\yb_0,y_0$, cross-concavity of $-g$ gives 
    \begin{equation}\label{eq:proof-fb_fpp-3}
        -g(x)\leq -g(x_0)+c(x,y_0)-c(x,\yb_0)-c(x_0,y_0)+c(x_0,\yb_0).
    \end{equation}
    Combining \eqref{eq:proof-fb_fpp-2} and \eqref{eq:proof-fb_fpp-3} yields \eqref{eq:proof-fb_fpp-1}, after using that $\delta_\phi=\delta_c$ by \eqref{eq:prop_cross_diff}.
\end{proof}

\begin{proof}[Proof of \Cref{thm:sc-cross-conc}]
    This proof is very similar to the proof of \Cref{thm:sc-fpp+gd}, thus we only provide a sketch of proof and focus on the main differences with \Cref{thm:sc-fpp+gd}. Let us take $f=-g$ and show that if $f$ is concave on $c$-segments $(x(t),\yh)$ with $-\nabla_xc(x(0),\yh)=-\nabla f(x(0))$ then $f$ is $c$-cross-concave. Considering four points $x,\xb,\yb,\yh$ as usual we want to show that for a certain segment $x(t)$ we have 
    \[
        \bracket{\nabla f(\xb),\dot x(0)}\leq \delta_c(x,\yb;\xb,\yh)=[c(x,\yh)-c(x,\yb)]-[c(\xb,\yh)-c(\xb,\yb)].
    \]
    If it is $(x(t),\yh)$ that is a $c$-segment (and not $(x(t),\yb)$ like in \Cref{thm:sc-fpp+gd}) then by \Cref{lemma:delta-c-segments} the function $t\mapsto c(x(t),\yh)-c(x(t),\yb)$ is convex. As a consequence 
    \begin{align*}
        \delta_c(x,\yb;\xb,\yh)=&[c(x(1),\yh)-c(x(1),\yb)]-[c(x(0),\yh)-c(x(0),\yb)]\\
        &\geq \bracket{\nabla_xc(\xb,\yh)-\nabla_xc(\xb,\yb),\dot x(0)},
    \end{align*}
    and from then we can conclude as in the proof of \Cref{thm:sc-fpp+gd}. The proof of strong cross-concavity (ii) works in the same way.
\end{proof}

\subsection{Proofs of Section~\ref{sec:examples}} \label{app:sec-ex}

\begin{proof}[Proof of \Cref{prop:compare-cc-gc}]
    The idea of this proof is to obtain \eqref{eq:delta-and-path} not using $c$-segments and cross-curvature, but directly geodesics in $M$ and Riemannian curvature. This is because having nonnegative cross-curvature is always stronger than having nonnegative Riemannian curvature~\cite{Loeper2009}, and in general much stronger. 

    Fix $x,\xb,\yh\in M$ such that
    $-\nabla_xc(\xb,\yh)=-\nabla f(\xb)$. Let $\gamma$ be a geodesic joining $\gamma(0)=\xb$ to $\gamma(1)=x$ and denote by $\xi=\dot\gamma(0)$ its initial velocity. Points (i)--(iv) are consequences of the following observations: when $M$ has nonnegative curvature we have 
    \begin{equation}\label{eq:proof-prop-compare-cc-gc-3}
        \delta_c(x,\yb;\xb,\yh)\leq \bracket{\nabla f(\xb),\dot\gamma(0)},
    \end{equation}
    while if $M$ has nonpositive curvature,
    \begin{equation}\label{eq:proof-prop-compare-cc-gc-4}
        \delta_c(x,\yb;\xb,\yh)\geq \bracket{\nabla f(\xb),\dot\gamma(0)}.
    \end{equation}
    Both \eqref{eq:proof-prop-compare-cc-gc-3} and \eqref{eq:proof-prop-compare-cc-gc-4} follow from the so-called \emph{comparison theorems}. When $M$ has nonnegative curvature, the Toponogov compararison theorem~\cite[Chapter 11]{Petersen_book} says that 
    \begin{equation}\label{eq:toponogov}
        d^2\big(\exp_p(\xi),\exp_p(\eta)\big) \leq \abs{\xi-\eta}^2,
    \end{equation}
    for any $p\in M$, $\xi,\eta\in T_pM$. Here we denote by $\abs{\xi-\eta}^2=\mathsf{g}_p(\xi-\eta,\xi-\eta)$ the metric at $p$. Moreover when $M$ has nonpositive curvature, the Rauch comparison theorem~\cite[Chapter 10]{MR1138207} implies
    \begin{equation}\label{eq:rauch}
        d^2\big(\exp_p(\xi),\exp_p(\eta)\big) \geq \abs{\xi-\eta}^2.
    \end{equation}
    Taking $p=\xb$, $\xi=\dot\gamma(0)$, $\eta=-\frac1L\nabla_xc(\xb,\yh)$, using that $\eta=-\frac1L\nabla f(\xb)$ and the identity \eqref{eq:cross-diff-riem}, then \eqref{eq:toponogov} gives \eqref{eq:proof-prop-compare-cc-gc-3} and \eqref{eq:rauch} gives \eqref{eq:proof-prop-compare-cc-gc-4} after a bit of algebra.

    (i): If $f$ is geodesically convex then $f(x)-f(\xb)\geq \bracket{\nabla f(\xb),\dot\gamma(0)}$ and we can use \eqref{eq:proof-prop-compare-cc-gc-3} to deduce cross-convexity.

    (ii): If $f=-g$ is cross-concave then \eqref{eq:proof-prop-compare-cc-gc-3} gives $f(x)-f(\xb)\leq \bracket{\nabla f(\xb),\dot\gamma(0)}$ and we can deduce geodesic concavity of $f$ which is geodesic convexity of $g$.
    
    (iii): If $f$ is cross-convex then \eqref{eq:proof-prop-compare-cc-gc-4} implies $f(x)-f(\xb)\geq \bracket{\nabla f(\xb),\dot\gamma(0)}$ and we can deduce geodesic convexity.

    (iv): If $f=-g$ is geodesically concave then $f(x)-f(\xb)\leq \bracket{\nabla f(\xb),\dot\gamma(0)}$ and we can use \eqref{eq:proof-prop-compare-cc-gc-4} to deduce cross-concavity.
\end{proof}

\begin{proof}[Proof of \Cref{prop:c-concave-equiv}]
    (i): This can be found in \cite[Prop.~16.2]{Villani_book_2009}, up to sign changes.
    
    (ii), (iii): These are similar to the proof of \Cref{prop:compare-cc-gc}. Take three points $x,\xb,\yh\in M$ such that $-\nabla_xc(\xb,\yh)=-\nabla f(\xb)$. Let $\xi,\eta\in T_\xb M$ such that $\exp_\xb(\xi)=x$ and $\exp_\xb(\eta)=\yh$ i.e.\ $\eta=-\frac1L\nabla f(\xb)$. As in the proof of \Cref{thm:differential-criterion-c-concavity}, $c$-concavity says that $f(x)-f(\xb)\leq \frac{L}{2}\big(d^2(x,\yh)-d^2(\xb,\yh)\big)$, which can be written as 
    \begin{equation*}
        f(x)-f(\xb)\leq \frac{L}{2}\big(d^2(x,\yh)-\abs{\eta}^2\big).
    \end{equation*}
    As for \ref{riem:bound-Hessian} by (i) it is equivalent to $f(x)-f(\xb)\leq \bracket{\nabla f(\xb),\xi}+ \frac{L}{2}d^2(x,\xb)$ which can also be written as 
    \[
        f(x)-f(\xb)\leq \frac{L}{2}\big(-2\bracket{\eta,\xi}+ \abs{\xi}^2\big)=\frac{L}{2}\big(\abs{\xi-\eta}^2-\abs{\eta}^2\big).
    \]
    We see that all is left to compare are the quantities $\abs{\xi-\eta}^2$ and $d^2(x,\yh)=d^2(\exp_\xb(\xi),\exp_\xb(\eta))$. The implications \ref{riem:c-concave}$\implies$\ref{riem:bound-Hessian} and \ref{riem:bound-Hessian}$\implies$\ref{riem:c-concave} then work as in \Cref{prop:compare-cc-gc}, using comparison theorems. 

    (iv): \cite[Lemma 2.1]{MR3634803} shows \ref{riem:Lip-gradient}$\implies$\ref{riem:f-bound-above}, Then use (i).
\end{proof}
\subsection{Estimate sequences}

Given $F:X\rightarrow \R$, a sequence $(\phi_n,\lambda_n)_{n\in\N}$, where $\phi_n:X\rightarrow \R$ and $\lambda_n>0$, is a weak estimate sequence at $x_*$ \cite[p6]{zhang2018estimate} if
\begin{equation}\label{eq:est-seq_def}
       \lim_{n\rightarrow \infty} \lambda_n=0, \text{ and }  \phi_{n}(x_*) \le (1-\lambda_n) F(x_*) + \lambda_n \phi_0(x_*).
\end{equation}
If there exists $(x_n)_{n\in\N}\in X^\N$, such that $F(x_n)\le \phi_n(\cdot)$, then an immediate result follows \cite[Lemma 2.2.1]{Nesterov2018}
\begin{equation}\label{eq:est-seq_cv }
       F(x_n)-F(x_*) \le \phi_n(x_*)-F(x_*) \le \lambda_n (\phi_0(x_*)-F(x_*)).
\end{equation}
Introducing a sequence $(\alpha_n)_{n\in\N}\in[0,1]^\N$ whose sum diverges, a practical way to build such estimate sequences is to find $(\phi_n)_{n\in\N}$ satisfying
\begin{equation}\label{eq:est-seq_proof}
        \phi_{n+1}(x_*) \le (1-\alpha_n) \phi_{n}(x_*) + \alpha_n F_n(x_*),
\end{equation}
with $F_n$ a lower bound of $F$. Then, by \cite[Proposition 2.2]{Baes2009estimate}, for $\lambda_{n}\coloneqq\Pi_{i=0}^{n-1}(1- \alpha_i)$, $(\phi_n,\lambda_n)_{n\in\N}$ is a weak estimate sequence at $x_*$.

\begin{lemma}[Estimate sequence from \eqref{sfpp}]\label{lem:estimate-seq}
    Consider $\phi:X\times Y \rightarrow \R$ satisfying \eqref{sfpp} for some $\lambda \in [0,1)$, and set $F(x)\coloneqq\inf_{y\in Y} \phi(x,y)$ and $\phi_n(x)\coloneqq\phi(x,y_{n+1})$. Then, defining
    \begin{equation}\label{eq:rate_estimate_seq_app}
        \alpha_n=\max\left(\lambda \frac{\phi(x_*,y_{n+1})-\phi(x_{n+1},y_{n+1})}{(1-\lambda)(\phi(x_*,y_{n})-\phi(x_{n},y_{n}))}, \frac{\phi(x_{n+1},y_{n+1})-f_*}{\phi(x_*,y_{n})-\phi(x_{n},y_{n})}\right),
    \end{equation}
    Then either $\sum\alpha_n$ converges and $\phi(x_{n},y_{n})-f_*=\calO(1/n)$, or $\sum \alpha_n$ diverges and
    $(\phi_n,\lambda_n)_{n\in\N}$ is a weak estimate sequence at $x_*$ for $\lambda_{n}\coloneqq\Pi_{i=0}^{n-1}(1- \alpha_i)$, and $F(x_n)-f_* \le \lambda_n (\phi(x_*,y_0)-f_*)$.
\end{lemma}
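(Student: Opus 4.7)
The plan is to verify the hypotheses of the estimate-sequence construction recalled just before the statement, the cornerstone being a one-step contraction at $x_*$ extracted from \eqref{sfpp}. Plugging $x=x_*$ in \eqref{sfpp} (applied at step $n$, i.e.\ with $(x_0,y_0,y_1)$ replaced by $(x_n,y_n,y_{n+1})$) and taking the infimum of the right-hand side over $y\in Y$ yields
\begin{equation*}
  \phi(x_*,y_{n+1})-f_* \le (1-\lambda)\bigl[\phi(x_*,y_n)-\phi(x_n,y_n)\bigr], \qquad n\ge 0.
\end{equation*}
Since the left-hand side is nonnegative, this also provides the lower bound $\phi(x_*,y_n)-\phi(x_n,y_n)\ge [\phi(x_*,y_{n+1})-f_*]/(1-\lambda)$, which will be used repeatedly below.

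A first sanity check is that $\alpha_n$ in \eqref{eq:rate_estimate_seq_app} lies in $[0,1]$: nonnegativity is immediate from the $x$-update of Algorithm~\ref{algo:am} and from $\phi(x_{n+1},y_{n+1})\ge f_*$, while inserting the contraction above (at step $n$) in the denominators shows that the first argument is bounded by $\lambda$ and the second by $1-\lambda$. The main step is then to verify that, with the trivial lower bound $F_n\equiv f_*$, the recursion \eqref{eq:est-seq_proof} holds, namely $\phi_{n+1}(x_*)\le (1-\alpha_n)\phi_n(x_*)+\alpha_n f_*$. After subtracting $f_*$, this reduces to $\phi(x_*,y_{n+2})-f_*\le (1-\alpha_n)[\phi(x_*,y_{n+1})-f_*]$; applying the contraction at step $n+1$ to the left-hand side, it suffices to check that
\begin{equation*}
  \alpha_n\le \lambda+(1-\lambda)\,\frac{\phi(x_{n+1},y_{n+1})-f_*}{\phi(x_*,y_{n+1})-f_*}.
\end{equation*}
Using the contraction at step $n$ to replace $\phi(x_*,y_n)-\phi(x_n,y_n)$ by $[\phi(x_*,y_{n+1})-f_*]/(1-\lambda)$ in the denominators of \eqref{eq:rate_estimate_seq_app}, each of the two arguments of the $\max$ can be bounded separately by this target; this term-by-term bookkeeping, which dictates the precise form of the two arguments of the $\max$, is the only technical hurdle.

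Once \eqref{eq:est-seq_proof} is secured, the standard induction of \cite[Proposition 2.2]{Baes2009estimate} delivers $\phi_n(x_*)-f_*\le \lambda_n(\phi_0(x_*)-f_*)$ with $\lambda_n=\prod_{i=0}^{n-1}(1-\alpha_i)$. When $\sum\alpha_n=\infty$ we have $\lambda_n\to 0$ and $(\phi_n,\lambda_n)$ is a genuine weak estimate sequence; combining the descent bound $F(x_{n+1})=\phi(x_{n+1},y_{n+2})\le \phi(x_{n+1},y_{n+1})\le \phi(x_*,y_{n+1})=\phi_n(x_*)$ with one application of the contraction at $n=0$ to bound $\phi_0(x_*)-f_*\le (1-\lambda)[\phi(x_*,y_0)-f_*]\le \phi(x_*,y_0)-f_*$ yields the stated rate after a harmless index shift. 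In the complementary case $\sum\alpha_n<\infty$, the sequence $\lambda_n$ stays bounded away from $0$ and the weak estimate sequence is uninformative; however, as noted in Remark~\ref{rmk:fpp_interpretation}, \eqref{sfpp} always implies \eqref{fpp}, so Theorem~\ref{thm:am-rates}(ii) furnishes the sublinear fallback $\phi(x_n,y_n)-f_*=\calO(1/n)$.
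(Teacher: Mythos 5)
Your proposal is correct and follows the same skeleton as the paper's proof (one-step contraction from \eqref{sfpp} at $x_*$, verification that $\alpha_n\in[0,1]$, verification of the recursion \eqref{eq:est-seq_proof}, then a case split on $\sum\alpha_n$), but it deviates in two genuine ways. First, you take the trivial lower bound $F_n\equiv f_*$, so the recursion you must check is $\phi(x_*,y_{n+2})-f_*\le(1-\alpha_n)\,[\phi(x_*,y_{n+1})-f_*]$; the paper instead uses the \eqref{sfpp}-induced lower bound $F_n(x)=\phi(x,y_{n+1})+(1-\lambda)(\phi(x_n,y_n)-\phi(x,y_n))$, which turns \eqref{eq:est-seq_proof} into the stronger requirement $\phi(x_*,y_{n+1})-\phi(x_*,y_{n+2})\ge\alpha_n(1-\lambda)(\phi(x_*,y_n)-\phi(x_n,y_n))$. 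Since the contraction gives $(1-\lambda)(\phi(x_*,y_n)-\phi(x_n,y_n))\ge\phi(x_*,y_{n+1})-f_*$, the paper's inequality implies yours; your direct verification (both arguments of the max in \eqref{eq:rate_estimate_seq_app} are bounded by $\lambda+(1-\lambda)\frac{\phi(x_{n+1},y_{n+1})-f_*}{\phi(x_*,y_{n+1})-f_*}$, which is exactly what $(1-\lambda)[\phi(x_*,y_{n+1})-\phi(x_{n+1},y_{n+1})]\le(1-\alpha_n)[\phi(x_*,y_{n+1})-f_*]$ requires) checks out and is arguably cleaner, though it explains less where the particular form of $\alpha_n$ comes from. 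Second, for $\sum\alpha_n<\infty$ you invoke \eqref{sfpp}$\Rightarrow$\eqref{fpp} and \Cref{thm:am-rates}(ii), whereas the paper re-derives the $O(1/n)$ rate by summing $\alpha_i\ge\frac{\phi(x_{i+1},y_{i+1})-f_*}{\phi(x_*,y_i)-\phi(x_i,y_i)}$ directly; both are valid. One small caveat: your final chain bounds $F(x_{n+1})$, not $F(x_n)$, by $\phi_n(x_*)$, so you actually obtain $F(x_n)-f_*\le\lambda_{n-1}(\phi(x_*,y_0)-f_*)$ rather than the stated $\lambda_n$; the paper's own appeal to the estimate-sequence conclusion has the same off-by-one (it needs $F(x_{n+1})\le\phi_n(\cdot)$, not $F(x_n)\le\phi_n(\cdot)$), so this is a shared indexing wrinkle rather than a defect of your argument.
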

\begin{proof} The \eqref{sfpp} property gives us a lower bound on $F$  
    \[F_n(x)\coloneqq \phi(x,y_{n+1})+ (1-\lambda)(\phi(x_{n},y_{n}) - \phi(x,y_{n})).\]
    Using the definition of $\phi_n$ and doing some rearrangements, \eqref{eq:est-seq_proof} becomes
    \begin{equation}\label{eq:est-seq_proof2}
        \phi(x_*,y_{n+2}) \le \phi(x_*,y_{n+1}) + \alpha_n (1-\lambda)(\phi(x_{n},y_{n}) - \phi(x_*,y_{n})).
    \end{equation}
    However \eqref{sfpp} applied at $x_*$ gives
    \begin{multline}
        \phi(x_*,y_{n+1}) - \phi(x_*,y_{n+2}) \ge \lambda \phi(x_*,y_{n+1})-f_*+(1-\lambda)\phi(x_{n+1},y_{n+1})\\
        \ge \max\left(\phi(x_{n+1},y_{n+1})-f_*,\lambda (\phi(x_*,y_{n+1})-\phi(x_{n+1},y_{n+1}))\right)\ge 0.
    \end{multline}
    By definition of $\alpha_n$, \eqref{eq:est-seq_proof2} thus holds. Moreover, applying again \eqref{sfpp}, we have
    \begin{equation*}
        \phi(x_*,y_{n+1})-\phi(x_{n+1},y_{n+1})\le \phi(x_*,y_{n+1}) -f_* \le (1-\lambda)(\phi(x_*,y_{n})-\phi(x_{n},y_{n}))
    \end{equation*}
    and, by definition of the iterates, we have $f_*\le\phi(x_{n+1},y_{n+1})\le \phi(x_{n},y_{n})\le \phi(x_*,y_{n})$. So $\alpha_n\in[0, 1]$.

    If $C=\sum\alpha_n<\infty$, then, using the descent property of the iterates, we obtain that
    \begin{equation*}
        C \ge \sum^{n-1}_{i=0}\alpha_i\ge \sum^{n-1}_{i=0} \frac{\phi(x_{i+1},y_{i+1})-f_*}{\phi(x_*,y_{i})-\phi(x_{i},y_{i})}\ge n\frac{\phi(x_{n},y_{n})-f_*}{\phi(x_*,y_{0})-\phi(x_{0},y_{0})}.
    \end{equation*}
    Otherwise, if $\sum\alpha_n=\infty$, we conclude by applying \eqref{eq:est-seq_cv }.
\end{proof}

\bibliographystyle{amsalphaurl}
\bibliography{general_gradient_descent}

\end{document}